\documentclass[11pt,reqno,a4paper]{amsart}

\usepackage[utf8]{inputenc}
\usepackage[english]{babel}
\usepackage{bookmark}
\usepackage{amsmath}
\usepackage{amsthm}
\usepackage{mathtools}


\usepackage{stmaryrd}
\usepackage{bm}
\usepackage{bbm}
\usepackage{amssymb}
\usepackage[dvipsnames]{xcolor}
\usepackage{graphicx}
\usepackage{tcolorbox}
\usepackage{float}

\usepackage{caption}
\usepackage{subcaption}
\usepackage{listings}
\usepackage{enumitem}
\usepackage[
  hmarginratio={1:1},               
  vmarginratio={1:1},               
  textwidth=400pt,                  
  heightrounded                     
]{geometry}
\usepackage{csquotes}
\MakeOuterQuote{"}

\usepackage{tikz}
\usetikzlibrary{positioning,
decorations.pathreplacing,
patterns,arrows,math}
\usepackage{pgfplots}
\pgfplotsset{compat=1.15}
\usepackage{mathrsfs}

\usepackage{anyfontsize}            

\DeclareMathOperator*{\argmax}{argmax}

\begin{document}

\theoremstyle{plain}
\newtheorem{theorem}{Theorem}
\newtheorem{proposition}[theorem]{Proposition}
\newtheorem{lemma}[theorem]{Lemma}
\newtheorem{definition}[theorem]{Definition}
\newtheorem{remark}[theorem]{Remark}
\newtheorem{example}[theorem]{Example}
\numberwithin{equation}{section}
\newtheorem{assumption}{Assumption}

\newcommand{\N}{\mathbb{N}}
\newcommand{\Z}{\mathbb{Z}}
\newcommand{\Q}{\mathbb{Q}}
\newcommand{\R}{\mathbb{R}}
\newcommand{\C}{\mathbb{C}}
\newcommand{\E}{\mathbb{E}}
\renewcommand{\S}{\mathbb{S}}
\renewcommand{\L}{\mathcal{L}}
\renewcommand{\H}{\mathcal{H}}
\newcommand{\eps}{\varepsilon}
\newcommand{\Cc}{\mathcal{C}}
\newcommand{\spt}{\mathrm{spt}}
\newcommand{\diam}{\mathrm{diam}}
\newcommand{\inc}{\subseteq}
\newcommand{\isEquivTo}[1]{\underset{#1}{\sim}}
\newcommand{\mres}{\mathbin{
\vrule height 1.6ex depth 0pt width
0.13ex\vrule height 0.13ex depth 0pt width 1.3ex}}
\newcommand{\Lag}{\mathrm{Lag}}
\newcommand{\RLag}{\mathrm{RLag}}
\DeclarePairedDelimiter{\Iint}{\llbracket}{\rrbracket}

\date{\today}
\title{Unidimensional semi-discrete
partial optimal transport}
\author[Adrien Cances]{Adrien Cances}
\address{Adrien Cances (\href{mailto:adrien.cances@universite-paris-saclay.fr}{\tt adrien.cances@universite-paris-saclay.fr}), Université Paris-Saclay, Laboratoire de mathématiques d’Orsay, ParMA, 91405, Orsay, France}
\author[Hugo Leclerc]{Hugo Leclerc}
\address{Hugo Leclerc  (\href{mailto:hugo.leclerc@universite-paris-saclay.fr}{\tt hugo.leclerc@universite-paris-saclay.fr}), Université Paris-Saclay, Laboratoire de mathématiques d’Orsay, ParMA, 91405, Orsay, France}

\maketitle
\begin{sloppypar}

\begin{abstract}
We study the semi-discrete formulation of one-dimensional
partial optimal transport with quadratic cost,
where a probability density is partially transported
to a finite sum of Dirac masses of smaller total mass.
This problem arises naturally in applications such as
risk management, the modeling of crowd motion,
and sliced partial transport algorithms for point cloud registration.
Unlike higher-dimensional settings, the dual functional
in the unidimensional case exhibits reduced regularity.
To overcome this difficulty, we introduce
a regularization procedure based on thickening
the density along an auxiliary dimension.
We prove that the maximizers of the regularized dual problem
converge to those of the original dual problem,
with quadratic rate in the introduced thickness.
We further provide a numerical scheme that leverages
the regularized functional, and we validate our analysis
with simulations that confirm the quadratic convergence rate.
Finally, we compare the semi-discrete and fully discrete settings,
demonstrating that our approach offers both improved stability
and computational efficiency for unidimensional
partial transport problems.
\end{abstract}

\section{Introduction}

The standard definition of optimal
transport requires
both measures to have the same (finite) mass.
In~\cite{caffarelli2010free},
Caffarelli and McCann introduced the notion
of \textit{partial} optimal transport
\footnote{Several authors (including Caffarelli and McCann)
use the term \textit{optimal partial transport}, but both
expressions are found in the literature.},
which lifts this assumption and asks
how to move a prescribed fraction of mass
from the first measure to the second one,
in the cheapest possible way.
As a result, partial optimal transport allows to compare
two measures of \textit{different masses}.
There is an extensive literature on
the case where both the target and
source measures are absolutely continuous.
For the quadratic cost,
existence and uniqueness of solutions,
as well as regularity of the free boundaries
--- the boundaries of the \textit{active regions},
on which sits the respective amounts of mass
of the source and target measures that are
actually transported --- have been studied
in the seminal work~\cite{caffarelli2010free},
and in~\cite{figalli2010optimal,indrei2013free}.
A few years later,
Chen and Indrei study the free boundary regularity
for a more general cost function
in~\cite{chen2015regularity},
and, in~\cite{davila2016dynamics},
Dávila and Kim inspect the evolution of
the free boundaries and the monotonicity
of the optimal potential
as the fraction of mass
that must be transported varies
At last, in 2018, the papers of Igbida and
Nguyen\cite{igbida2018optimal,igbida2018augmented}
address the partial optimal transport problem for a general cost and
with generic source and target measures.
The authors introduce a PDE of
Monge-Kantorovich type which enables them
to extend the uniqueness and monotonicity results
to this more general framework, and is also
convenient for numerical analysis and computations.
It is interesting to note that partial optimal transport
is a special case of \textit{unbalanced} optimal transport,
obtained by using total variation for the penalization terms.
We refer to~\cite[Section~2.3]{chizat2017unbalanced}
for a detailed description of this result.

\subsection{Sliced partial optimal transport}
As its balanced version,
partial optimal transport is a computationally
expensive problem, especially in high dimension.
This has led several
authors~\cite{rabin2012wasserstein,
bonneel2015sliced}
to work on \textit{sliced} partial optimal transport:
the two measures are projected on all lines passing
through the origin, and the
corresponding partial transport costs
are integrated.
In the spirit of the sliced-Wasserstein distance,
which was defined by Peyré
et al.~\cite{rabin2012wasserstein}
and Rabin et al.~\cite{bonneel2015sliced},
this definition leverages the fact that the
problem is easier to solve in dimension one.
However, while one-dimensional transport is trivial
in the sense that for empirical measures,
it amounts to a simple sort,
one-dimensional partial transport is much more complex.

\subsection{Motivation for
one-dimensional partial optimal transport}
In 2019, Bonneel et al.~\cite{bonneel2019spot}
have proposed an algorithm to solve
the fully-discrete case.
More precisely,
their algorithm tackles the case where the
two measures are (unweighted) sums of
Dirac masses of $\R$, of the form
$\sum_{x\in X}\delta_{x}$,
with sets of points $X,\tilde X$ of
different cardinalities $N\neq\tilde N$.
The goal is to injectively assign every point
of the smaller point cloud to a point
of the larger one.
The method can then be used to compute an
approximation of the sliced partial optimal transport
cost between two (unweighted)
sets of points of different cardinalities.
More recently, Schmitzer et al.~\cite{bai2023sliced}
used a dual formulation to create an algorithm
for the same problem, although slightly more
general, since a tunable Lagrange parameter
allows for leaving certain points of the
source out of the mapping.
Both methods have a quadratic worst-case
time complexity, but in practice they achieve
quasi-linear complexity.
One of the applications highlighted
in~\cite{bonneel2019spot,bai2023sliced}
is an improved version
of the Iterative Closest Point (ICP) algorithm
for point cloud registration.
The latter problem consists in estimating,
among a certain class
of transformations, the one that best aligns
a given point cloud to another one (of different
cardinality).
While the ICP algorithm is quite efficient
when considering rigid transformations
(rotation and translation), it behaves in practice
much less well for similarities
(rotation, translation and scaling)
because the scaling factor tends to
go to zero as the iterations progress.
Bonneel et al. showed that a sliced-OPT-based
ICP helps tackle this issue 
by providing
an injective matching at each iteration,
while avoiding the intractability of
full-fledged optimal transport on large problems
in dimension $d\geq 2$.
Schmitzer et al., on the other hand,
developed an ICP-type algorithm
which harnesses sliced partial optimal transport
to be more robust to outliers, whose proportion
is assumed to be known.

\subsection{Semi-discrete, one-dimensional, partial optimal transport}
In this paper, we focus on the semi-discrete
setting. Specifically, the problem consists in
transporting part of a given probability density
to a (positively) weighted sum of Dirac measures
whose total mass is less than one, so as to
minimize the quadratic transport cost.
This problem can be applied to risk
management, see~\cite{ennaji2024robust},
where the goal is to solve a
partial multimarginal optimal transport problem
in which each one of the $D$ marginals is
a univariate probability density $\rho_j$.
If one searches an approximate solution
as a discrete measure
$\mu = \sum_{i=1}^N \alpha_i \delta_{y_i}$,
where $y_{1},\dots,y_{N}\in\R^{D}$ and where
$\alpha_{i},\dots,\alpha_{N}$ are non-negative
weights summing to less than one,
then one-dimensional
semi-discrete partial optimal transport
is a natural way to penalize the constraints.
Indeed, the smaller the optimal partial transport cost
from $\rho_j$ to $\pi^j_{\#}\mu
= \sum_{i=1}^N \alpha_i \delta_{y_i^j}$,
the closer the $j$\textsuperscript{th} marginal
of $\mu$ is to being dominated by $\rho_j$.
This type of penalization already appears
in~\cite{leclerc2020lagrangian}
for solving a crowd motion problem
via Lagrangian discretization, although
in dimension two.
The crowd is modeled as a probability density,
constrained to be bounded by one to account for the
incompressibility of individuals when
densely packed, see for
instance~\cite{maury2011handling}.
When passing to a discrete measure
for numerical issues,
this congestion constraint is dealt with
using a penalty term proportional
to the partial optimal transport cost between
the discrete probability measure and
the (unnormalized) Lebesgue measure on the domain.
This ensures that the discrete measure
in question is \textit{close} to a
density bounded by one.

\subsection{Precise setting}
Let us now introduce a few notations to better
define the problem addressed in this work.
We denote $\rho$ the probability density on $\R$
and $\sum_{i=1}^N \alpha_i \delta_{y_i}$
the discrete measure,
where the $y_i$ are pairwise distinct
points of $\R$
and where the $\alpha_i>0$ sum to
$\|\alpha\|_1\in(0,1)$.
The goal is to minimize
the (quadratic) transport cost
$\int |x-y|^2 d\gamma(x,y)$
over all measures $\gamma$ on $\R\times\R$
whose first marginal
is bounded above by $\rho$ on all Borel sets
and whose second marginal is $\mu$.
By standard duality arguments,
the problem is equal to its dual
\begin{equation}
\label{eq:unregularized-problem}
\max_{\psi\in\R^N} \left\{
\int_{\R} \min(0,\min_{1\leq i\leq N}
[|x-y_i|^2-\psi_i])d\rho(x)
+ \sum_{i=1}^N\alpha_i\psi_i
\right\},
\end{equation}
which is of particular interest since the
optimized dual variable $\psi$ lives
in \textit{finite dimension}.
This kind of formulation was already
used in~\cite{leclerc2020lagrangian}
by Leclerc and co-authors,
for semi-discrete optimal transport in dimension two or three.
However, in dimension one,
the problem is curiously much harder to solve
numerically.
We interpret this numerical difficulty
as a consequence of the lack of regularity
of the dual functional in dimension one,
compared to dimension two or more (replacing
absolute values with Euclidean norms), see
Theorem~\ref{thm:regularity-multiD}
and Example~\ref{ex:irregularity-1D}.
Indeed, whereas in the latter case
the functional is of class
$\Cc^2$ on a large open containing
the maximizers, in the former case the
functional is only $\Cc^1$, and its
second derivative has singularities.
In order to circumvent this difficulty,
we minimize a regularized functional,
which in fact corresponds
to solving a two-dimensional transport problem,
where the one-dimensional
density $\rho$ is thickened
by a width $2\eps$ along a second
dimension.
Thanks to the $\Cc^2$ regularity
of the new functional, the Hessian matrix
is well-defined.
The new problem reads
\begin{equation}
\label{eq:regularized-problem}
\max_{\psi\in\R^N}
- \eps^2\int_{\R} f^*(\eps^{-2}\max_{1\leq i\leq N}
(\psi_i-|x-y_i|^2))d\rho(x)
+ \sum_{i=1}^N\alpha_i\psi_i
\end{equation}
where
\begin{equation}
f^*(t) =
\begin{cases}
0 \quad &\mathrm{if} \quad t<0, \\
\frac{2}{3}t^\frac{3}{2} \quad &\mathrm{if}
\quad 0\leq t\leq 1, \\
t-\frac{1}{3} \quad &\mathrm{if} \quad t>1.
\end{cases}
\end{equation}
Our main theoretical result is the following theorem,
which guarantees that the
regularized problem converges at a
fast rate to the initial one as the parameter
$\eps$ goes to zero.
Moreover, Example~\ref{ex:quadratic-rate-is-tight}
shows that our result is sharp.
\begin{theorem}
\label{thm:intro}
Let $\psi^\eps\in\R^N$
be the maximizer
of Problem~\eqref{eq:regularized-problem}.
Under some regularity assumptions on the density,
we have
\begin{equation}
\label{eq:quadratic-rate-intro}
\|\psi^\eps-\psi^*\|
\lesssim \eps^2
\end{equation}
where $\psi^*\in\R^N$ is the maximizer
of Problem~\eqref{eq:unregularized-problem}
and where $\lesssim$ hides a constant
depending only on $\rho$, on
the $y_i$ and on the $\alpha_i$.
\end{theorem}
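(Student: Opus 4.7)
The plan is the classical convex-analytic one: bound $\|\nabla\Phi_\eps(\psi^*)\|$, where $\Phi$ and $\Phi_\eps$ denote the concave objectives in~\eqref{eq:unregularized-problem} and~\eqref{eq:regularized-problem}, and then convert this into a bound on $\|\psi^\eps-\psi^*\|$ via a uniform strong-concavity estimate for $\Phi_\eps$ near $\psi^*$. The skeleton is the one-line inequality
\[
\lambda\,\|\psi^\eps-\psi^*\|^2
\leq \langle\nabla\Phi_\eps(\psi^*)-\nabla\Phi_\eps(\psi^\eps),\,\psi^\eps-\psi^*\rangle
\leq \|\nabla\Phi_\eps(\psi^*)\|\cdot\|\psi^\eps-\psi^*\|,
\]
coming from $\nabla\Phi_\eps(\psi^\eps)=0$ and $\lambda$-strong concavity. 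It remains to establish (a)~the gradient bound $\|\nabla\Phi_\eps(\psi^*)\|=O(\eps^2)$ and (b)~a uniform $\lambda>0$ on a common neighborhood containing $\psi^*$ and $\psi^\eps$.

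For~(a), the envelope theorem gives $\partial_{\psi_i}\Phi(\psi)=\alpha_i-\rho(\Lag_i(\psi))$ with $\Lag_i(\psi)=\{x:\,i=\argmax_j(\psi_j-|x-y_j|^2)\ \text{and}\ \psi_i-|x-y_i|^2>0\}$, so $\nabla\Phi(\psi^*)=0$. The analogous formula for $\Phi_\eps$ replaces the indicator of $\{\psi_i-|x-y_i|^2>0\}$ with the smoothed profile $(f^*)'(\eps^{-2}\,\cdot)$, so that each component of $\nabla\Phi_\eps(\psi^*)$ is, up to sign, the integral of $(f^*)'(\eps^{-2}s)-\mathbf{1}_{\{s>0\}}$ against $\rho$ restricted to $\{\argmax_j=i\}$, with $s(x)=\max_j(\psi_j^*-|x-y_j|^2)$. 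This integrand vanishes off the thin band $\{0<s<\eps^2\}$ and takes values in $[-1,0]$ on it. Under the regularity hypotheses, which should force $\partial\{s>0\}$ to consist of finitely many nondegenerate points $x^*$ (meaning $s'(x^*)\neq 0$), the change of variable $u=s(x)$ near each $x^*$ reduces the contribution to
\[
\frac{\rho(x^*)}{|s'(x^*)|}\int_0^{\eps^2}\bigl(\eps^{-1}\sqrt{u}-1\bigr)\,du
= -\frac{1}{3}\,\frac{\rho(x^*)}{|s'(x^*)|}\,\eps^2
\]
at leading order. Summing the finitely many boundary contributions yields $\|\nabla\Phi_\eps(\psi^*)\|\leq C_1\eps^2$ with $C_1$ depending only on $\rho,\{y_i\},\{\alpha_i\}$.

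Ingredient~(b) is where the real work lies. A parallel change-of-variable computation using $(f^*)''(t)=\tfrac{1}{2}t^{-1/2}$ on $(0,1)$ in place of $(f^*)'$ shows that $\nabla^2\Phi_\eps(\psi^*)$ is well defined for $\eps>0$ and converges as $\eps\to 0$ to a matrix $H$ assembled from the boundary weights $\rho(x^*)/|s'(x^*)|$, which plays the role of a generalized Hessian for $\Phi$ at $\psi^*$ in spite of the $\Cc^1$-not-$\Cc^2$ pathology emphasized in the introduction. Under the regularity hypotheses this $H$ should be strictly negative definite with a spectral gap $\lambda_0>0$ depending only on the data. Combined with the convergence $\psi^\eps\to\psi^*$ (standard, from pointwise convergence of $\Phi_\eps$ to $\Phi$, equi-coercivity, and uniqueness of $\psi^*$) and continuity of $(\eps,\psi)\mapsto\nabla^2\Phi_\eps(\psi)$ near $(0,\psi^*)$, this produces a uniform $\lambda>0$ and a common neighborhood on which $\Phi_\eps$ is $\lambda$-strongly concave and to which $\psi^\eps$ eventually belongs, closing the argument.

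The hardest step is therefore~(b): one must show that the singularity $(f^*)''(t)\sim\tfrac{1}{2}t^{-1/2}$ at $0$ integrates into a well-defined, uniformly nondegenerate limit. This is precisely the mechanism by which the regularization compensates for the loss of $\Cc^2$ regularity of $\Phi$, and its analysis is where the regularity hypotheses on $\rho$ enter in full force.
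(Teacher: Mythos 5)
Your step (a) is sound: at $\psi^*$ the discrepancy between the regularized and exact cell masses is an integral of $(f^*)'(\eps^{-2}s)-\mathbbm 1_{\{s>0\}}$ over the band $\{0<s<\eps^2\}$, and since at the free boundary points $|s'|=2\sqrt{\psi_i^*}$ is bounded away from zero (the components of $\psi^*$ are positive and bounded), the change of variables indeed gives $\|\nabla\Phi_\eps(\psi^*)\|\leq C_1\eps^2$; your constant $-\tfrac13\rho(x^*)/|s'(x^*)|$ is even consistent with the sharp rate $\eps^2/3$ of Example~\ref{ex:quadratic-rate-is-tight}. Your overall route (gradient bound at $\psi^*$ plus uniform strong concavity) also genuinely differs from the paper's, which instead differentiates the optimality condition $G^\eps(\psi^\eps)=\alpha$ in $\eps$, bounds $\|\dot\psi^\eps\|\lesssim\eps$ and integrates. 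But both routes stand or fall on the same quantitative ingredient, and that is exactly what your step (b) leaves unproved.

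The gap is that you assert, rather than establish, that the limiting matrix $H$ "should be strictly negative definite with a spectral gap $\lambda_0>0$ depending only on the data", together with joint continuity of $(\eps,\psi)\mapsto\nabla^2\Phi_\eps(\psi)$ near $(0,\psi^*)$. Neither claim is automatic. The matrix $-\nabla^2\Phi_\eps$ is tridiagonal and only \emph{weakly} diagonally dominant (each row sums to the mass flowing through the ball boundary $\partial B_{y_i}(\sqrt{\psi_i})$); in the balanced case the analogous matrix is singular, with $\mathbbm 1$ in its kernel. Strict definiteness here comes entirely from the partial-transport structure — the auxiliary cell carrying mass $1-\|\alpha\|_1>0$ — and turning this into a spectral gap depending only on $\rho$, the $y_i$ and the $\alpha_i$ is the core of the paper's argument: one augments the Hessian to a Laplacian matrix on $\{1,\dots,N,\infty\}$, shows its graph is uniformly connected with edge weights bounded below (Lemmas~\ref{lem:psi^eps-bounded} and~\ref{lem:M-eps-uniformly-connected}, which require controlling $\sqrt{\psi_i^\eps}$ from above and below and showing each cell crosses the strip of width $2\eps$), invokes Fiedler's connectivity inequality, and passes to the principal submatrix (Lemma~\ref{lem:min-eigenvalue-of-submatrix}). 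None of this is supplied by your proposal. Moreover, the continuity of the Hessian at $(0,\psi^*)$ that you invoke to transfer the bound to a neighborhood can fail precisely in the degenerate configuration $z_i(\psi^*)=y_i+\sqrt{\psi_i^*}$, where the unregularized functional is not twice differentiable (Example~\ref{ex:irregularity-1D}); your strong-concavity inequality needs a lower bound on $-\nabla^2\Phi_\eps$ along the whole segment joining $\psi^*$ to $\psi^\eps$, uniformly in $\eps$, and a correct argument must obtain it by a quantitative stability argument (as the paper does at $\psi^\eps$) rather than by continuity at $\eps=0$. Until this uniform eigenvalue bound is proved, the conclusion $\|\psi^\eps-\psi^*\|\lesssim\eps^2$ does not follow.
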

The proof harnesses standard tools
of semi-discrete optimal transport, and spectral graph
theory in order to show that, in dimension
$d\geq 2$ and assuming the probability density
has convex, compact support, the dual functional
of semi-discrete partial optimal transport is
\textit{strictly} concave on some subset containing the solution.
In particular, this implies that the respective
solutions $\psi^\eps$ of the regularized problems
are unique.
We emphasize that an important feature of
partial optimal transport is
its equivalence to a \textit{balanced}
version of the problem (where the two
measures have equal mass),
up to introducing a fictitious point and
extending the cost function to zero for this
particular point. This corresponds to imagining
that the \textit{inactive part} of
the source measure --- the part that is not sent
to the target measure --- is actually getting
transported ``for free'' to the auxiliary point.
This point of view was introduced
by Caffarelli and McCann
in~\cite[Section~2]{caffarelli2010free},
in the case of absolutely continuous
source and target measures, but the equivalence
still holds in the more general
framework of generic probability measures.
As a consequence for semi-discrete partial transport,
strict concavity of the
dual functional amounts to connectedness
of the (undirected) graph with vertices
$\{1,\dots,N,\infty\}$
and weights given by the Hessian of
the functional
(the diagonal coefficients weigh the edges
between $\infty$ and the other vertices).
The reason why the dual variable $\psi$
is unique in semi-discrete partial optimal transport
(assuming the density has connected support)
is that the ``hidden'' component of $\psi$
associated to the fictitious point
is implicitly set to zero. This eliminates
the underdeterminacy of balanced
semi-discrete optimal transport, for which adding the same
constant to each component of the dual
variable yields the same value.

To show quadratic convergence of
$\psi^\eps$ towards $\psi^*$, we first
differentiate in $\eps$ the optimality
conditions satisfied by $\psi^\eps$
to get an ODE of the form
$D^2\mathcal{K}^\eps(\psi^\eps)\dot\psi^\eps
+\partial_\eps[\nabla\mathcal{K}^\eps](\psi^\eps)$.
The Hessian at the maximizer can be
bounded below by a positive-definite matrix
independent of $\eps$,
once again thanks to spectral graph theory,
and more specifically to Laplacian matrices and
a careful analysis of the second derivatives
of $\mathcal{K}^\eps$.
As for the mixed derivative
$\partial_\eps[\nabla\mathcal{K}^\eps]$,
it can be bounded above quite straightforwardly
by $\eps$ up to a multiplicative constant,
and the quadratic rate follows by
integrating the resulting bound on $\dot\psi^\eps$.

\subsection{Outline}
After a brief overview of optimal transport,
we introduce its semi-discrete partial
counterpart, in its
primal and dual formulations.
We establish that the dual functional
is of class $\mathcal C^2$ for
dimensions greater than one,
whereas its Hessian has singularities
in the case of dimension one.
Next, we present the regularized problem and prove
Theorem~\ref{thm:intro} by introducing an ODE
that is satisfied by the family of solutions
$\eps\mapsto\psi^\eps$.
The final section is dedicated to numerics.
After a brief description of the algorithm,
we provide simulation results that
illustrate the quadratic convergence rate
predicted by Theorem~\ref{thm:intro},
and apply our algorithm to a semi-discrete
version of the sliced partial transport matching.
At last, we compare the discrete-discrete
versus semi-discrete settings of unidimensional
optimal partial transport, both in terms of accuracy
and execution time, via a benchmark of Boonnel's
algorithm and ours.

\section{Semi-discrete optimal transport
and the case of partial transport}

We first introduce the Wasserstein distance
(of order $2$)
for probability measures on $\R^d$,
as well as its dual formulation and the specific
framework of the semi-discrete case.

\begin{definition}
For two probability measures
with finite second moment
$\rho, \mu \in \mathcal P_2(\R^d)$,
the \textnormal{$2$-Wasserstein distance}
is defined as
\begin{equation}
\label{eq:Wasserstein-distance}
W_2(\rho, \mu) =
\left(
\inf_{\gamma \in \Gamma(\rho, \mu)}
\int_{\R^d \times \R^d}
\|x-y\|^2 \; d\gamma(x,y)
\right)^{\frac{1}{2}},
\end{equation}
where $\Gamma(\rho, \mu)$ denotes
the set of \textnormal{couplings}
(or \textnormal{transport plans})
of $\rho$ and $\mu$, that is, probability measures
on $\R^d\times \R^d$ with first marginal $\rho$
and second marginal $\mu$.
\end{definition}



The celebrated Brenier
theorem~\cite{brenier1991polar} states that under
mild assumptions --- for example if the source
measure $\rho$ is absolutely continuous ---
the optimal transportation does
\textit{not} split mass.
More precisely, any optimal plan is induced
by an (optimal transport) map $T\in L^2(\rho)$,
which is uniquely defined $\rho$-almost everywhere
and (naturally) sends $\rho$ to $\mu$,
in the sense that $T_\#\rho=\mu$.
The notation $T_\#\rho$ stands for
the \textit{pushforward} of $\rho$ by $T$,
defined by $[T_\#\rho](B) = \rho(T^{-1}(B))$
for every measurable set $B$, and we say that
$T$ \textit{pushes $\rho$ forward} to $T_\#\rho$.
Any optimal transport plan thus writes
$\gamma = (\mathrm{Id},T)_\#\rho$, and the mass
at $\rho$-almost every point $x$
is sent to a unique point $y=T(x)$.

\subsection{Kantorovich duality}
The optimal transport problem~\eqref{eq:Wasserstein-distance}
is an infinite-dimensional
linear program, and
the Fenchel-Rockafellar theorem allows to
establish strong duality:
if $\rho$ and $\mu$ both have finite second moment,
the problem is equal to its dual
\begin{align}
\label{eq:Wasserstein-dual}
W_2(\rho, \mu)^2 &=
\sup \left\{
\int_{\R^d} \varphi \; d\rho
+ \int_{\R^d} \ \; d\mu :
\varphi,\psi \in \mathcal C_b(\R^d),
\varphi \oplus \psi \leq c
\right\}
\\
\label{eq:Wasserstein-dual-only-psi}
&= \sup_{
\psi \in \Cc_b(\R^d)
}
\left\{\int_{\R^d} \psi^c \; d\rho
+ \int_{\R^d} \psi \; d\mu \right\}
\end{align}
where $\varphi \oplus \psi$
maps $(x,y)\in\R^d\times\R^d$ to
$\varphi(x) + \psi(y)$,
$c$ is the quadratic cost $c(x,y)=\|x-y\|^2$,
and $\psi^c$ denotes
the \textit{$c$-transform} of
$\psi$, defined by
\begin{equation}
\label{eq:generic-c-transform}
\psi^c(x) =
\inf_{y\in \R^d}
c(x,y) - \psi(y).
\end{equation}
The interested reader may refer
to~\cite[Chapter~5]{villani2008optimal}
for a detailed study of the Kantorovich duality.
In this work, we will often refer to the dual variable
$\psi$ as the \textit{potential}, whether or not
it is optimal
in~\eqref{eq:Wasserstein-dual-only-psi}.
Note that most authors use this terminology
for optimal $\psi$ only.



\subsection{Semi-discrete transport}
In the semi-discrete case,
we consider an absolutely continuous
probability measure
$\rho \in
\mathcal P^{\mathrm{ac}}_2(\R^d)$
with finite second moment, and
a discrete probability measure
$\mu = \sum_{i=1}^N \alpha_i
\delta_{y_i}$,
where the $y_i \in \R^d$
are pairwise distinct
(without loss of generality)
and where the $\alpha_i>0$ sum to one.
With a slight abuse of notation,
throughout this work
we also denote $\rho$
for the density of $\rho$.
Using the fact that $\mu$ has finite support,
we identify the potential with a vector
$\psi\in\R^N$ and
the dual problem reads
$\sup_{\psi \in \R^N} \mathcal K(\psi)$,
where the Kantorovich functional
$\mathcal K : \R^d\to\R$
and the \textit{Laguerre cells}
are respectively
defined by
\begin{equation}
\label{eq:dual-functional}
\mathcal K(\psi) =
\sum_{i=1}^N \int_{
\Lag_i(\psi)}
(\|x-y_i\|^2 - \psi_i)
\; d\rho(x)
+ \sum_{i=1}^N \alpha_i \psi_i
\end{equation}
and
\begin{equation}
\label{eq:Lag-cells}
\Lag_i(\psi)
= \{x\in \R^d : \|x-y_i\|^2 - \psi_i
\leq \|x-y_j\|^2 - \psi_j,
\forall j \in \Iint{1,N}\}.
\end{equation}

The Laguerre cells form a tessellation of
the space --- that is, a cover with Lebesgue
negligible pairwise intersection
--- into $N$ convex polytopes,
each cell $\Lag_i(\psi)$ being sent to the
position $y_i$ of its corresponding Dirac mass.
Indeed, the intersection of any two Laguerre cells
belongs to a hyperplane orthogonal to the segment
joining the two related Dirac masses, and is
thus of Lebesgue measure zero.
Since $\rho$ is absolutely continuous, the set of
points belonging to more than one cell is
$\rho$-negligible, and $\psi$ induces a transport map
defined $\rho$-almost everywhere by
$T_\psi|_{\Lag_i(\psi)} \equiv y_i$.
The corresponding transport plan is $\gamma =
\sum_{i=1}^N(\rho\mres\Lag_i(\psi))
\otimes\delta_{y_i}$.
As a result, in the dual formulation
mass cannot be split, and by strong duality
the optimal plan is induced by a potential,
hence by a map in $L^2(\rho)$.

\begin{remark}
The convexity of the Laguerre cells is very
specific to the quadratic cost $c(x,y)=\|x-y\|^2$,
since for this cost the sublevel sets of
$c(\cdot,y_i)-c(\cdot,y_j) =
\langle\cdot,y_j-y_i\rangle
- \|y_j\|^2 + \|y_i\|^2$
are precisely half-spaces.
For non-quadratic costs, the functions
$c(\cdot,y_i) - c(\cdot,y_j)$ are not linear
in $x$ anymore and so we cannot have convexity of
both the sublevel and superlevel sets.
\end{remark}

\begin{remark}
\label{rem:c-transform-of-vector}
In semi-discrete optimal transport, by a slight abuse of
notation, we also denote $\psi^c$
the $c$-transform of a vector $\psi\in\R^N$,
which is defined by
$\psi^c(x) = \min_{1\leq i\leq N}
\|x-y\|^2 - \psi_i$.
Of course, it naturally corresponds to the
$c$-transform of the function
$\tilde\psi:\R^d\to\R\cup\{-\infty\}$
defined by $\tilde\psi(y_i)=\psi_i$
and $\tilde\psi(y)=-\infty$ for
$y\not\in\{y_1,\dots,y_N\}$.
\end{remark}

\subsection{Partial transport}
\label{subsec:partialtransport}

To compare two finite measures with
potentially different masses,
we use a partial Wasserstein cost,
which naturally extends the standard
Wasserstein distance.

\begin{definition}
For two (positive) measures
$\rho,\mu \in \mathcal{M}_{+,2}(\R^d)$
with finite second moment and such that
$\rho(\R^d) \geq \mu(\R^d)$, the
\textnormal{partial $2$-Wasserstein cost}
is defined as
\begin{equation}
\label{eq:partial-OT}
\mathcal{T}_{\geq}(\rho,\mu) =
\left(
\inf_{\gamma \in \Gamma_{\max}(\rho,\mu)}
\int_{\R \times \R}
\|x-y\|^2 d\gamma(x,y)
\right)^{\frac{1}{2}}
\end{equation}
where $\Gamma_{\max}(\rho,\mu)$ denotes the set
of measures $\gamma \in \mathcal
M(\R^d \times \R^d)$ of mass
equal to that of $\mu$,
whose second marginal is $\mu$, and whose first
marginal is \textnormal{dominated by $\rho$}
in the sense that
$\gamma(A\times\R^d) \leq \rho(A)$
for all measurable $A$.
The index \textnormal{max}
corresponds to the fact that
$\Gamma_{\max}(\rho,\mu)$
is the set of subcouplings
which have maximum mass, in the sense that
as much mass as possible
is transported from $\rho$ to $\mu$.
\end{definition}

Since what used to be an equality constraint
on the first marginal of $\gamma$ is
now an inequality constraint,
the associated Lagrange multiplier $\varphi$
must be non-positive, and the dual problem reads
\begin{align}
\mathcal{T}_{\geq}(\rho,\mu)^2 &=
\sup\left\{
\int_{\R^d} \varphi \; d\rho
+ \int_{\R^d} \psi \; d\mu :
\varphi,\psi \in \Cc(\R^d),
\varphi\oplus\psi\leq c,\varphi\geq 0
\right\} \\
&= \sup_{\psi \in \Cc(\R^d)}\left\{
\int_{\R^d} \psi^{\tilde c} \; d\rho
+ \int_{\R^d} \psi \; d\mu
\right\}
\end{align}
where $\psi^{\tilde c}$ denotes the
\textit{$\tilde c$-transform} of $\psi$,
defined by
$\psi^{\tilde c}(x) = \min\{\psi^c(x),0\}$

\subsection{Semi-discrete partial transport}

We now make the assumption that the source
density $\rho \in \mathcal{P}^{\mathrm{ac}}(\R^d)$
is compactly supported and of
mass normalized to one,
and we take the target measure to be a discrete
measure of mass strictly less than one.
Namely,
$\mu = \sum_{i=1}^N \alpha_i \delta_{y_i}$
where the $y_i \in \R^d$ are pairwise distinct
and where the $\alpha_i>0$ sum
to some $\|\alpha\|_1 \in (0,1)$.
The potential is once again identified with a vector
$\psi\in\R^N$, and for a fixed $\psi$,
the optimal $\varphi$ is now the
$\psi^{\tilde c}$-transform of $\psi$, defined by
\[
\psi^{\tilde c}(x) = \min\left\{0,
\min_{1\leq i\leq N}\|x-y_i\|^2-\psi_i\right\}.
\]
The Kantorovich dual thus writes
$\sup_{\psi \in (0,\infty)^N}
\mathcal K(\psi)$,
where the dual functional
$K : (0,+\infty)\to\R$
and the \textit{restricted Laguerre cells}
are respectively defined by
\begin{equation}
\label{eq:dual-functional-SDOPT}
\mathcal K(\psi) =
\sum_{i=1}^N \int_{\RLag_i(\psi)}
(\|x-y_i\|^2 - \psi_i) \; d\rho(x)
+ \sum_{i=1}^N \alpha_i \psi_i
\end{equation}
and
\begin{equation}
\label{eq:RLag-SDOPT-1D}
\RLag_i(\psi)
= \Lag_i(\psi)
\cap \overline{B}_{y_i}(\sqrt{\psi_i}).
\end{equation}
Note that strict positivity of each component
of $\psi$ is a necessary condition for the
the restricted Laguerre cells to all be non-empty.
Also, depending on the potential $\psi$,
the restricted Laguerre cells do not necessarily
(and will most often not) cover $\spt\rho$.
When there is ambiguity,
we will refer to the $\Lag_i(\psi)$
as the \textit{unrestricted} Laguerre cells.
We now prove the existence of a maximizer via
a standard concavity argument.

\begin{proposition}
\label{prop:C1-regularity}
The functional
$\mathcal K : (0,\infty)^N \to \R$
is concave and of class $\Cc^1$,
with gradient
\begin{equation}
\label{eq:optimal-condition}
\nabla \mathcal{K}(\psi) = \alpha - G(\psi),
\end{equation}
where
$G_i = \rho(\RLag_i)$.
Therefore,
$\psi$ is a maximizer of $\mathcal{K}$
if and only if every restricted Laguerre cell
has the same mass as its corresponding Dirac
mass in the target measure:
\begin{equation}
\label{eq:optimality-conditions-SDOPT}
\mathcal \psi \in \argmax \mathcal K
\quad \Longleftrightarrow \quad
G_i(\psi) = \alpha_i, \; \forall
i\in \Iint{1,N}.
\end{equation}
Moreover, there exists a maximizer.
\end{proposition}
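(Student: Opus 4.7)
The plan is to rewrite the functional as $\mathcal{K}(\psi) = \int_{\R^d}\psi^{\tilde c}(x)\,d\rho(x) + \sum_{i=1}^N\alpha_i\psi_i$ and exploit that, for each fixed $x$, the map $\psi\mapsto\psi^{\tilde c}(x) = \min\bigl\{0,\min_i(\|x-y_i\|^2-\psi_i)\bigr\}$ is the minimum of the $N+1$ affine functions of $\psi$ given by the constant $0$ and the maps $\psi\mapsto\|x-y_i\|^2-\psi_i$. This immediately yields pointwise concavity in $\psi$; integrating in $x$ preserves concavity, and adding the linear term $\sum_i\alpha_i\psi_i$ does too, so $\mathcal{K}$ is concave on $(0,\infty)^N$.

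To get $\mathcal{C}^1$ regularity together with the gradient formula, I would use a Danskin-type argument. For $\rho$-almost every $x$, the minimum defining $\psi^{\tilde c}(x)$ is attained uniquely, either by the constant $0$ (when $x$ lies outside every closed restricted Laguerre cell) or by a single index $i$ (when $x$ belongs to the interior of $\RLag_i(\psi)$); the exceptional set is contained in a finite union of hyperplanes and spheres, which is Lebesgue---hence $\rho$---negligible by absolute continuity of $\rho$. At such points $\partial_{\psi_i}\psi^{\tilde c}(x) = -\mathbf{1}_{\RLag_i(\psi)}(x)$, and since the difference quotients are uniformly bounded by $1$, dominated convergence lets me commute the derivative with the integral, giving $\partial_{\psi_i}\mathcal K(\psi) = \alpha_i - G_i(\psi)$. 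Continuity of each $G_i$ follows from the Hausdorff continuity of $\psi\mapsto\RLag_i(\psi)$ combined once more with the absolute continuity of $\rho$ and dominated convergence applied to the indicators.

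Since $\mathcal K$ is then concave and $\mathcal C^1$ on the open convex set $(0,\infty)^N$, a point in the interior is a maximizer if and only if $\nabla\mathcal K$ vanishes there, i.e.\ iff $G(\psi)=\alpha$; this gives the stated equivalence.

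The last and most delicate step is existence. I would establish coercivity by using compactness of $\spt\rho$: setting $M := \max_i\max_{x\in\spt\rho}\|x-y_i\|^2 < \infty$, for any $\psi$ whose largest coordinate $\psi_{i^\star}$ exceeds $M$ one has $\psi^{\tilde c}(x)\leq \|x-y_{i^\star}\|^2 - \psi_{i^\star}\leq M - \psi_{i^\star}$ uniformly in $x\in\spt\rho$, whence
\[
\mathcal K(\psi) \;\leq\; M - \psi_{i^\star} + \sum_i \alpha_i\psi_i \;\leq\; M + (\|\alpha\|_1 - 1)\,\psi_{i^\star}.
\]
Because $\|\alpha\|_1<1$, this forces $\mathcal K(\psi)\to -\infty$ as $\|\psi\|_\infty\to\infty$. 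Combined with continuity (the formula extends continuously to $[0,\infty)^N$), this gives a maximizer on a sufficiently large compact box $[0,R]^N$, and the first-order analysis then excludes the faces $\{\psi_i=0\}$: there $\RLag_i(\psi)$ degenerates and $G_i(\psi)=0<\alpha_i$, so $\partial_{\psi_i}\mathcal K>0$ pushes the maximizer into $(0,\infty)^N$. I expect the coercivity estimate to be the main non-trivial step: it is precisely the strict sub-probability condition $\|\alpha\|_1<1$ that kills the degeneracy in the constant direction $\mathbf 1$ which plagues the balanced semi-discrete dual, and so yields a \emph{bona fide} maximizer rather than a maximizer-up-to-constants.
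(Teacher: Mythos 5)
Your proof is correct, but it takes a genuinely different route from the paper's at two points. For concavity and the gradient, the paper works directly with the cell decomposition: it establishes the supergradient inequality $\mathcal K(\psi')\leq\mathcal K(\psi)+\langle\alpha-G(\psi),\psi'-\psi\rangle$ by comparing integrands on the Laguerre cells of $\psi'$ and of $\psi$, and then upgrades to $\Cc^1$ by the convex-analysis fact that a concave function admitting a continuous supergradient selection (continuity of $G$ coming from dominated convergence) is differentiable with that gradient; you instead obtain concavity from $\psi\mapsto\psi^{\tilde c}(x)$ being a minimum of affine maps and differentiate under the integral by a Danskin-type argument, using that the tie set is contained in finitely many hyperplanes and spheres, hence is $\rho$-negligible. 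Both are standard and give the same formula; your route trades the supergradient-selection lemma for the a.e.-unique-minimizer observation (note that your appeal to Hausdorff continuity of the cells is superfluous: a.e. pointwise convergence of the indicators plus dominated convergence is all you actually use, exactly as in the paper). The larger divergence is existence: the paper argues by contradiction on an unbounded maximizing sequence (a component drifting to $-\infty$ empties a cell, one drifting to $+\infty$ would make the inactive region vanish, which is incompatible with $\|\alpha\|_1<1$), whereas you prove the quantitative coercivity bound $\mathcal K(\psi)\leq M+(\|\alpha\|_1-1)\|\psi\|_\infty$ on the positive orthant, apply Weierstrass on a box $[0,R]^N$, and exclude the faces $\{\psi_i=0\}$ via $G_i(\psi)=0<\alpha_i$. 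This estimate legitimately uses the compact support of $\rho$ (assumed in this section), makes explicit where the strict inequality $\|\alpha\|_1<1$ enters, and is arguably more self-contained than the paper's sequential argument; the only detail to spell out is the choice of $R$ (large enough that the coercivity bound falls below the value at a fixed reference potential) and the one-sided differentiability at the faces, which your dominated-convergence argument covers since a sphere of radius zero is $\rho$-null.
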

\begin{proof}
Thanks to equations~\eqref{eq:dual-functional-SDOPT}
and~\eqref{eq:RLag-SDOPT-1D}, we can write
\begin{equation}
\label{eq:dual-functional-SDOPT-no-RLag}
\mathcal K(\psi) =
\sum_{i=1}^N \int_{\Lag_i(\psi)}
\min\{\|x-y_i\|^2-\psi_i, 0\} \; d\rho(x)
+ \sum_{i=1}^N \alpha_i \psi_i,
\end{equation}
and the second sum is linear in $\psi$
(with constant gradient $\alpha$)
so we focus on the first sum.
Let $\psi, \psi' \in (0,\infty)^N$.
On $\Lag_i(\psi')$ we have
$\min\{\|x-y_i\|^2-\psi'_i, 0\}
\leq \min\{\|x-y_j\|^2-\psi'_j, 0\}$
for all $j$, so since for each potential
the (unrestricted)
Laguerre cells are pairwise disjoint and
cover $\spt\rho$, we have
\begin{align*}
&\sum_{i=1}^N \int_{\Lag_i(\psi')}
\min\{\|x-y_i\|^2-\psi'_i, 0\} d\rho(x) \\
&\leq
\sum_{j=1}^N \int_{\Lag_j(\psi)}
\min\{\|x-y_j\|^2-\psi'_j, 0\} d\rho(x)
= \sum_{j=1}^N \int_{\RLag_j(\psi)}
(\|x-y_j\|^2-\psi'_j) d\rho(x) \\
&=
\sum_{j=1}^N \int_{\RLag_j(\psi)}
(\|x-y_j\|^2-\psi_j) d\rho(x)
+ \sum_{j=1}^N \rho(\RLag_j(\psi))
(\psi_j - \psi_j').
\end{align*}
Adding the linear terms of
$\mathcal{K}(\psi')$ yields
$\mathcal{K}(\psi') \leq \mathcal{K}(\psi)
+ \langle\alpha-G(\psi),\psi'-\psi\rangle$,
where $G_i(\psi)=\rho(\RLag_i(\psi))$,
and it follows that
$\mathcal{K} : (0,\infty)^N \to \R$
is concave and that its superdifferential
at $\psi$ contains $\alpha-G(\psi)$.
Furthermore, by dominated convergence
the $\rho$-measure of each restricted Laguerre cell
is continuous in $\psi$, and we deduce
that $\mathcal{K}$ is indeed differentiable
with continuous gradient
$\nabla\mathcal{K} = \alpha-G$.
To prove the existence of a maximizer,
take a maximizing sequence $(\psi^n)$ and
suppose by contradiction that it is not bounded.
Then, up to extraction of a subsequence, there
exists an index $i$ such that $(\psi^n_i)$
monotonically converges to $-\infty$ or $+\infty$.
In the first case, the corresponding restricted
Laguerre cell is eventually empty, which is a
contradiction ; in the second case, the complement
$\R^d\setminus\cup_{i=1}^N\RLag_i(\psi^n)=
\R^d\setminus\cup_{i=1}^N
\overline{B}_{y_i}(\sqrt{\psi^n_i})$
of the union of the cells has $\rho$-measure
converging to zero, which is also a contradiction,
since summing the optimality
conditions~\eqref{eq:optimality-conditions-SDOPT}
yields $1-\sum_{i=1}^N G_i(\psi)=
1-\sum_{i=1}^N\alpha_i > 0$.
\end{proof}

\begin{remark}
\label{rem:auxiliary-point}
Unlike in the the balanced case, the functional of
semi-discrete \textnormal{partial} optimal transport
is \textnormal{not invariant} up to addition of an identical
constant to each component of the potential.
This is due to the target and source measures
having different mass.
However, one can retrieve this property by
introducing a fictitious point $y_\infty$
to which the \textnormal{inactive} mass of $\rho$
will be sent, and by defining a new cost function
\[
\underline{c}(x,y_i) =
\begin{cases}
\|x-y_i\|^2 \quad &\mathrm{if} \quad
i \in\Iint{1,N}, \\
0 \quad &\mathrm{if} \quad
i=\infty.
\end{cases}
\]
In this setting, the potential has an additional
component $\psi_\infty$, and the
$i$\textsuperscript{th} \textnormal{restricted}
Laguerre cell of the partial transport setting is now
the $i$\textsuperscript{th} \textnormal{unrestricted}
Laguerre cell for these new potential
$\underline{\psi} = (\psi_1,\dots,\psi_N,\psi_\infty)$
and cost function $\underline{c}$. Indeed,
the constraint indexed by $j=\infty$ states that
$\|x-y_i\|^2 - \psi_i \leq 0 - \psi_\infty$,
i.e.~that $x$ is in the closed ball of center $y_i$
and of radius $\sqrt{\psi_i-\psi_\infty}$.
The additional Laguerre cell is sent to the
auxiliary point $y_\infty$, and is the closure
of the complement of the $N$ open balls
$B_{y_i}(\sqrt{\psi_i-\psi_\infty})$.
The reason for which the dual functional
$\mathcal{K}$ in~\eqref{eq:dual-functional-SDOPT}
--- or equivalently
in~\eqref{eq:dual-functional-SDOPT-no-RLag} ---
is not invariant up to addition of the same constant
to each component of the potential is that we
implicitly set $\psi_\infty=0$ in the balanced
transport formulation that we have just described.
Note that in this formulation, the
additional Laguerre cell is $\textnormal{not}$
convex, and in fact not even connected.
\end{remark}

In this article, we will mainly deal with
the one-dimensional case.
Let us emphasize that semi-discrete partial
transport in 1D is somewhat \textit{degenerate}
compared to higher dimensions.
This is due to the fact that the boundary
of each restricted Laguerre cell contains only
\textit{finitely} many points --- two points
exactly, the left and right extremities --- and that,
in some cases, their velocity
is not linear in the (arbitrarily small)
variation of the potential.
As will be apparent in section~\ref{subsec:SDOPT-1D},
this is due to the fact that these extremities are
respectively defined as a maximum and a minimum,
and that the maximum or minimum of two smooth
quantities is in general not smooth when the
latter coincide. We will thus be lead to
consider the \textit{unilateral} partial
derivatives of each cell's boundaries.

\begin{theorem}
\label{thm:regularity-multiD}
Fix $d\geq 2$ and suppose that $\rho$ is
compactly supported and absolutely continuous,
with continuous density on the support.
In addition, suppose that the intersection
of $\spt\rho$ with any sphere centered
at one of the $y_i$ is $\rho\H^{d-1}$-negligible,
as well as the intersection of $\spt\rho$
with any hyperplane intersecting its interior
and orthogonal to some segment $[y_i,y_j]$.
Then the functional $\mathcal{K}$ is $\Cc^2$
on the convex open set $\mathcal{D}$
of potentials such that
each restricted Laguerre cell has strictly positive
$\rho$-measure, as well as the complement
of their union.
The second derivatives are given by
\begin{equation}
\label{eq:non-diag-coeff-Hessian-multiD}
\partial^2_{\psi_i,\psi_j}
\mathcal{K}(\psi) =
\frac{1}{2\|y_i-y_j\|}
\int_{\RLag_i(\psi) \cap
\RLag_j(\psi)} \rho(x)
d\H^{d-1}(x)
\end{equation}
for $i\neq j$, and
\begin{equation}
\label{eq:diag-coeff-Hessian-multiD}
\partial^2_{\psi_i,\psi_i}
\mathcal{K}(\psi) =
- \frac{1}{2\sqrt{\psi_i}}\int_{
\RLag_i(\psi) \cap
S_{y_i}(\sqrt{\psi_i})} \rho(x)
d\H^{d-1}(x)
- \sum_{j\neq i} \partial^2_{\psi_i,\psi_j}
\mathcal{K}(\psi).
\end{equation}
In particular, $\mathcal{K}$
is strictly concave on $\mathcal{D}$
and the optimal potential is unique.
\end{theorem}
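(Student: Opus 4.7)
The plan is to pass to the balanced augmented-point formulation of Remark~\ref{rem:auxiliary-point}, in which the restricted Laguerre cells $\RLag_i(\psi)$ coincide with the unrestricted cells of a balanced semi-discrete problem on $N+1$ sites $\{y_1,\ldots,y_N,y_\infty\}$ with the convention $\psi_\infty=0$. Openness of $\mathcal{D}$ then follows from the continuity of $G$ already established in Proposition~\ref{prop:C1-regularity}, and its convexity is the standard admissible-set statement for balanced semi-discrete optimal transport. To obtain the $\Cc^2$ regularity I would differentiate $G_i(\psi)=\rho(\RLag_i(\psi))$ by a thin-shell boundary argument. The boundary $\partial\RLag_i$ decomposes into hyperplane facets $B_{ij}=\RLag_i\cap\RLag_j$ and the spherical cap $B_{i\infty}=\RLag_i\cap S_{y_i}(\sqrt{\psi_i})$. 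A direct computation shows that a perturbation $\psi_j\mapsto\psi_j+\delta$ translates $B_{ij}$ normally into $\RLag_i$ by $\delta/(2\|y_i-y_j\|)$, while $\psi_i\mapsto\psi_i+\delta$ pushes every $B_{ij}$ outward by the same amount and dilates the sphere of radius $\sqrt{\psi_i}$ by $\delta/(2\sqrt{\psi_i})$. Integrating $\rho$ against these infinitesimal displacements yields
\[
\partial_{\psi_j} G_i(\psi) \;=\; -\frac{1}{2\|y_i-y_j\|}\int_{B_{ij}}\rho\,d\H^{d-1} \quad (i\neq j),
\]
together with the matching diagonal formula. Continuity of these derivatives in $\psi$ is the step that genuinely uses the hypotheses on the intersections of $\spt\rho$ with spheres and hyperplanes: they prevent sudden creation or annihilation of a $(d-1)$-dimensional piece of boundary, so a dominated convergence argument delivers continuity of each of the $B_{ij}$ and $B_{i\infty}$ integrals. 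The announced Hessian formulas then follow from $\partial^2_{\psi_i\psi_j}\mathcal{K}=-\partial_{\psi_j}G_i$.

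For the strict concavity, I would recognise the Hessian as minus a weighted graph Laplacian. Setting
\[
w_{ij} = \frac{1}{2\|y_i-y_j\|}\int_{B_{ij}}\rho\,d\H^{d-1}, \qquad w_{i\infty} = \frac{1}{2\sqrt{\psi_i}}\int_{B_{i\infty}}\rho\,d\H^{d-1},
\]
the matrix $-D^2\mathcal{K}(\psi)$ is exactly the principal $N\times N$ submatrix obtained by deleting the row and column of index $\infty$ from the Laplacian of the weighted graph on $\{1,\ldots,N,\infty\}$ with edge weights $(w_{ij})$ and $(w_{i\infty})$. By the classical matrix-tree theorem, or more directly by the fact that the kernel of a graph Laplacian is spanned by the indicators of the connected components, such a submatrix is strictly positive definite if and only if the underlying weighted graph is \emph{connected}. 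The whole strict-concavity statement therefore reduces to this single combinatorial claim.

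Establishing the connectivity is the main obstacle. I would argue by contradiction: suppose there exists a partition $\{1,\ldots,N,\infty\}=S\sqcup S^c$ across which every weight vanishes. Letting $U=\bigcup_{i\in S}\underline{\Lag}_i(\underline\psi)$ and $V=\bigcup_{i\in S^c}\underline{\Lag}_i(\underline\psi)$ in the augmented formulation, both sets carry positive $\rho$-mass (since $\psi\in\mathcal{D}$), yet their common boundary inside $\spt\rho$ lies in a $\rho\H^{d-1}$-null union of hyperplane and sphere pieces. Connectedness of $\spt\rho$, which in our setting is ensured by the convex-support hypothesis invoked in the introduction, then rules out such a separation: at least one crossing pair $(i,j)\in S\times S^c$ must contribute a boundary piece of positive $\rho\H^{d-1}$-mass, contradicting the vanishing-weight assumption. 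This is also where the restriction $d\geq 2$ plays its role, since in dimension one the boundary pieces degenerate to points and the same dimension count breaks down. Once connectivity is established, $D^2\mathcal{K}(\psi)$ is strictly negative definite on all of $\mathcal{D}$, so $\mathcal{K}$ is strictly concave there. Any maximizer $\psi$ satisfies $G_i(\psi)=\alpha_i>0$ and $\sum_iG_i(\psi)=\|\alpha\|_1<1$, hence lies in $\mathcal{D}$, and strict concavity gives uniqueness.
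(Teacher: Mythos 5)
Your Hessian formulas are the right ones, and your strict-concavity route is genuinely different from the paper's: the appendix checks that $D^2\mathcal{K}$ is weakly diagonally dominant, irreducible, with at least one strictly dominant row, and invokes Taussky's theorem, whereas you read $-D^2\mathcal{K}$ as the grounded Laplacian (the principal submatrix at the auxiliary vertex $\infty$) of the augmented weighted graph and reduce positive definiteness to connectivity of that graph. Your criterion is in fact the sharper one --- it tolerates, for instance, two clusters of cells each touching the inactive region but not each other, a configuration in which the $N\times N$ Hessian is reducible and Taussky has to be applied blockwise --- and it is exactly the Laplacian machinery the paper itself uses later (Lemma~\ref{lem:Laplacian-matrices}, Lemma~\ref{lem:min-eigenvalue-of-submatrix}, the graph $\mathcal{G}^\eps_{\mathrm{u}}$) for the quantitative eigenvalue bound. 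So that part of your plan is a legitimate, arguably cleaner, alternative.

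Two steps are under-argued, and the first is the analytic heart of the theorem. First, differentiability of $G_i(\psi)=\rho(\RLag_i(\psi))$: your thin-shell computation produces the correct normal velocities of the facets, but the boundary of a restricted Laguerre cell is only piecewise smooth; facets meet along edges where the normal velocity is discontinuous, facets may appear or disappear as $\psi$ varies, and a facet may cross $\partial\spt\rho$. Justifying the first-order boundary-flux formula in this setting is precisely where the paper does its work: it parametrizes the cell in polar coordinates centered at $y_i$, covers the sphere by the sets $\Theta_{i,j}$, differentiates under the integral sign after a temporary nondegeneracy assumption on $y_i$, and relies on the fact that the edges have dimension $d-2$, hence are $\H^{d-1}$-negligible. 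You attribute the hypotheses on $\spt\rho$ only to the \emph{continuity} of the second derivatives; the \emph{existence} of the derivatives also needs this edge-negligibility argument, which your proposal skips. Second, connectivity: you assert that if all weights between $S$ and $S^c$ vanish, some crossing interface must nevertheless carry positive $\rho\H^{d-1}$-mass. Connectedness of $\spt\rho$ alone does not give this; you need that, for $d\geq 2$, a closed set of zero $\H^{d-1}$-measure cannot disconnect the (connected, e.g.\ convex) interior of $\spt\rho$, together with positivity of the density there so that zero $\rho\H^{d-1}$-mass upgrades to zero $\H^{d-1}$-measure. Both facts are used silently --- the paper is admittedly just as implicit when it claims irreducibility --- but if you take this route they should be stated and proved.
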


\begin{remark}
\label{rem:support-assumption-multiD}
The assumptions on the support of $\rho$
guarantee for instance that
$\partial_{\psi_i,\psi_j}^2\mathcal{K}$
does not jump when the spherical part of
the boundary of $\RLag_i$
crosses the support.
A simple case for which the assumptions on $\spt\rho$
are met is when the support of $\rho$ is a
bounded convex polygon.
\end{remark}

In dimension $d\geq 2$, the Kantorovich
functional~\eqref{eq:dual-functional-SDOPT}
is of class $\Cc^2$ whenever the density $\rho$
has a bounded polytope as its support and is continuous
on this set,
while it is only of class $\Cc^1$
in the unidimensional setting.
Before dealing with the case of dimension one,
let us state the precise result we derived for
the multidimensional case,
the proof of which is in
Appendix~\ref{sec:appendix-multiD}.

\subsubsection*{Edges of a cell}
The key notion to understand (at least conceptually)
the $\Cc^2$ regularity of the dual
functional in dimension greater than one is that
of \textit{edges} of a (restricted Laguerre) cell.
We call \textit{edge} of a cell
any intersection of the cell with two other cells,
or with one other cell and the inactive part
(the auxiliary cell introduced
in Remark~\ref{rem:auxiliary-point}).
The edges of a given cell are precisely
the points of its boundary whose respective
velocities are ill-defined. Indeed, if a point
belongs to an edge, then the two (or more)
facets it belongs to have different velocity
vectors as we vary the potential $\psi$.
Roughly speaking, in dimension $d \geq 2$,
the differentiability of the mass of a cell
with respect to the potential is due to the fact
that its edges are of dimension $d-2$, and
are thus not ``seen'' by the $(d-1)$-dimensional
Hausdorff measure.

\subsection{The unidimensional case}
\label{subsec:SDOPT-1D}

As mentioned above, in the one-dimensional case
the gradient of $\mathcal{K}$ is not differentiable.
However, it does have a weaker kind of regularity,
as stated in the following lemma,
the proof of which is reported to
Appendix~\ref{sec:appendix-uniqueness-SDOPT-1D}.
For convenience, we assume (without loss of generality)
that the positions of the Dirac masses are
labeled in increasing order:
$y_1 < y_2 < \dots < y_N$.
In this case, provided every (unrestricted)
Laguerre cell has non-empty interior, we have
$\Lag_i = [z_{i-1},z_i]$, where the positions
\begin{equation}
\label{eq:z_i}
z_i(\psi) = \frac{y_i + y_{i+1}}{2}
- \frac{\psi_{i+1} - \psi_i}{2(y_{i+1} - y_i)}
\end{equation}
are the boundaries between consecutive Laguerre cells.
Naturally, we set $z_0 \equiv -\infty$
and $z_N \equiv +\infty$.
The restricted Laguerre cells can then
be written
$\RLag_i = [a_i, b_i]$,
where
\begin{equation}
\label{eq:extremities-of-RLag-1D}
\begin{cases}
a_i(\psi) = \max\{z_{i-1}(\psi),
&y_i - \sqrt{\psi_i}\},\\
b_i(\psi) = \min\{z_{i}(\psi),
&y_i + \sqrt{\psi_i}\}.
\end{cases}
\end{equation}

We will make the following assumption
on the source measure.

\begin{assumption}
\label{a:density-SDOPT-1D}
The probability measure $\rho\in\mathcal{P}(\R)$
is absolutely continuous, its support is a bounded
interval, and on this interval its density $\rho$
is both continuous and bounded away from zero
and infinity, i.e. there exists finite strictly
positive constants $\rho_{\min},\rho_{\max}$ such that
$\rho_{\min} \leq \rho(x) \leq \rho_{\max}$
for every $x\in\spt\rho$.
\end{assumption}

\begin{lemma}
\label{lem:technical-SDOPT-1D}
Suppose that $\rho$ satisfies
Assumption~\ref{a:density-SDOPT-1D}.
Denote $\mathcal{D}$ the open set of potentials
for which every restricted Laguerre cell has nonzero
$\rho$-measure, as well as the complement of their union.
At each $\psi \in \mathcal D$, the function $G$
has well-defined unilateral directional derivatives
$\partial_v^+ G(\psi) := 
\lim_{t \to 0^+} (G(\psi + tv)-G(\psi))/t$
with respect to any vector $v\in\R^N$.
Moreover, any such derivative can be written
$\partial_v^+ G(\psi) = H(\psi,v)v$ for some
tridiagonal, weakly diagonally dominant
matrix $H(\psi,v)$.
\end{lemma}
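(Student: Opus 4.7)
The plan is to differentiate $G_i(\psi) = \int_{a_i(\psi)}^{b_i(\psi)} \rho(x)\,dx$ (with $\rho$ extended by zero outside $\spt\rho$) via a unilateral Leibniz rule, and then read off the tridiagonal structure and weak diagonal dominance from the explicit formula~\eqref{eq:extremities-of-RLag-1D} for $a_i, b_i$.

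First I would compute the unilateral directional derivatives of the cell endpoints. From~\eqref{eq:z_i} the four smooth building blocks have directional differentials
\[
\partial_v z_{i-1} = \tfrac{v_{i-1}-v_i}{2(y_i-y_{i-1})}, \quad \partial_v z_i = \tfrac{v_i - v_{i+1}}{2(y_{i+1}-y_i)}, \quad \partial_v(y_i \pm \sqrt{\psi_i}) = \pm\tfrac{v_i}{2\sqrt{\psi_i}},
\]
and the max (resp.\ min) of two smooth functions always admits a unilateral directional derivative, equal to the derivative of whichever candidate strictly dominates at $\psi$, and to the max (resp.\ min) of the two candidate derivatives at a tie. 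Hence $\partial_v^+ a_i$ and $\partial_v^+ b_i$ exist at every $\psi \in \mathcal{D}$, each involving only the components $v_{i-1}, v_i$ (for $a_i$) or $v_i, v_{i+1}$ (for $b_i$).

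Next, $\psi \in \mathcal{D}$ forces $a_i(\psi) < b_i(\psi)$, and $\rho$ is continuous on its support, so the unilateral Leibniz rule yields
\[
\partial_v^+ G_i(\psi) = \rho(b_i(\psi))\,\partial_v^+ b_i(\psi) - \rho(a_i(\psi))\,\partial_v^+ a_i(\psi),
\]
with the understanding that endpoints falling outside $\spt\rho$ contribute zero (which amounts to replacing $a_i, b_i$ by their truncations to $\spt\rho$ and applying the same max/min calculus). This linear combination of $v_{i-1}, v_i, v_{i+1}$ defines the $i$-th row of the announced tridiagonal matrix $H(\psi, v)$.

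Finally, weak diagonal dominance is checked by enumerating cases. If $b_i = z_i$, the contribution to row $i$ is $\tfrac{\rho(z_i)}{2(y_{i+1}-y_i)}(v_i - v_{i+1})$, placing equal magnitudes on the diagonal and superdiagonal with opposite signs; if $b_i = y_i+\sqrt{\psi_i}$, the contribution is $\tfrac{\rho(y_i+\sqrt{\psi_i})}{2\sqrt{\psi_i}}\,v_i$, lying purely on the diagonal. A symmetric analysis for $a_i$ completes the picture: every off-diagonal coefficient is exactly matched by an equal-magnitude diagonal contribution, and purely diagonal pieces from ball-type endpoints only add slack. The main subtlety, and the point where some care is needed, is the bookkeeping at tie points, where $\partial_v^+ a_i$ or $\partial_v^+ b_i$ is itself a max/min of two linear functionals of $v$, so that $H$ genuinely depends on $v$; however, each legitimate candidate produces a row of the same tridiagonal form with the same dominance pattern, so the conclusion is insensitive to the choice.
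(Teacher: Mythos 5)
Your proposal is correct and follows essentially the same route as the paper's proof in Appendix~\ref{sec:appendix-uniqueness-SDOPT-1D} (Lemma~\ref{lem:directional-derivatives-1D}): unilateral derivatives of the endpoints $a_i,b_i$ via the max/min calculus applied to $z_{i-1},z_i,y_i\pm\sqrt{\psi_i}$, the identity $\partial_v^+G_i=\rho(b_i)\,\partial_v^+b_i-\rho(a_i)\,\partial_v^+a_i$ obtained from $G_i=F_\rho(b_i)-F_\rho(a_i)$, and a case-by-case reading of each row as tridiagonal and weakly diagonally dominant, including the $v$-dependence of $H(\psi,v)$ at tie configurations. Your explicit truncation of the endpoints to $\spt\rho$ is a minor extra precaution that the paper leaves implicit and does not alter the structure of the argument.
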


Each $H(\psi,v)$ can in addition be chosen
to be symmetric, so that it is block diagonal,
with all blocks being symmetric, tridiagonal,
irreducible, weakly diagonally dominant matrices ;
see Lemma~\ref{lem:block-decomposition} in
Appendix~\ref{sec:appendix-uniqueness-SDOPT-1D}.
At last, we show in
Lemma~\ref{lem:strictly-dominant-coefficient}
of the same section that each of these blocks has
at least one strictly diagonally dominant coefficient,
and the matrix $H(\psi,v)$ is thus non-singular
by Taussky's theorem, see for
instance~\cite[Corollary~6.2.27]{horn2012matrix}.
This allows for the following uniqueness result.

\begin{proposition}
If $\rho$ satisfies Assumption~\ref{a:density-SDOPT-1D},
then the dual functional
\mbox{$\mathcal K : (0,\infty)^N \to \R$}
admits a unique maximizer $\psi^*$,
and this maximizer naturally belongs to the set
$\mathcal D$ defined in
Lemma~\ref{lem:technical-SDOPT-1D}.
\end{proposition}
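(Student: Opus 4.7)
The plan is to combine existence (Proposition~\ref{prop:C1-regularity}) with the non-singularity of the matrix $H(\psi,v)$ produced by Lemma~\ref{lem:technical-SDOPT-1D}: given two maximizers, concavity forces a whole segment of maximizers along which $G = \alpha$, so some one-sided directional derivative of $G$ at a maximizer vanishes; Lemma~\ref{lem:technical-SDOPT-1D} together with the invertibility of $H$ (via Taussky's theorem, as cited after the lemma) will then produce a contradiction. Since all the real analytic content has been isolated in Lemma~\ref{lem:technical-SDOPT-1D} and in the subsequent block-decomposition and strict-dominance statements, the main obstacle for this particular proposition is purely structural: one must verify that a maximizer indeed lies in the open set $\mathcal{D}$, so that the lemma can actually be invoked at that point.

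First I would check that every maximizer $\psi^*$ belongs to $\mathcal{D}$. Under Assumption~\ref{a:density-SDOPT-1D}, $\rho$ is bounded below by $\rho_{\min} > 0$ on its (bounded) support, so $\rho$-measure and Lebesgue measure are comparable on $\spt \rho$. The optimality conditions~\eqref{eq:optimality-conditions-SDOPT} give $\rho(\RLag_i(\psi^*)) = G_i(\psi^*) = \alpha_i > 0$ for every $i$, so each restricted Laguerre cell has strictly positive $\rho$-measure. Summing yields $\rho(\cup_i \RLag_i(\psi^*)) = \|\alpha\|_1 < 1$, so the complement of the union also carries mass $1 - \|\alpha\|_1 > 0$. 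Thus $\psi^* \in \mathcal{D}$ and Lemma~\ref{lem:technical-SDOPT-1D} can be applied at $\psi^*$.

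Now I would suppose by contradiction that $\psi^*$ and $\widetilde\psi$ are two distinct maximizers and set $v := \widetilde\psi - \psi^* \neq 0$. Since $(0,\infty)^N$ is convex and $\mathcal{K}$ is concave of class $\Cc^1$, the entire segment $[\psi^*, \widetilde\psi]$ consists of maximizers, so $G(\psi^* + tv) = \alpha = G(\psi^*)$ for all $t \in [0,1]$. Openness of $\mathcal{D}$ ensures that $\psi^* + t v \in \mathcal{D}$ for sufficiently small $t > 0$, and consequently
\[
\partial_v^+ G(\psi^*) = \lim_{t \to 0^+} \frac{G(\psi^* + tv) - G(\psi^*)}{t} = 0.
\]
By Lemma~\ref{lem:technical-SDOPT-1D} we may write $\partial_v^+ G(\psi^*) = H(\psi^*, v)\, v$, for a tridiagonal, weakly diagonally dominant matrix $H(\psi^*, v)$ which, as noted after the lemma, can be taken symmetric and decomposed into irreducible blocks each possessing at least one strictly diagonally dominant row. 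Taussky's theorem then implies that each such block, and hence $H(\psi^*, v)$ itself, is non-singular. The identity $H(\psi^*, v)\, v = 0$ therefore forces $v = 0$, contradicting $\psi^* \neq \widetilde\psi$ and establishing uniqueness.
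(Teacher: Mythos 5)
Your proof is correct and follows essentially the same route as the paper's: concavity gives a segment of maximizers, so $\partial_v^+ G(\psi^*)=0$, and the non-singularity of $H(\psi^*,v)$ from Lemma~\ref{lem:technical-SDOPT-1D} (via the block decomposition, strict diagonal dominance and Taussky's theorem) forces $v=0$. The only difference is that you spell out the membership $\psi^*\in\mathcal{D}$ via the optimality conditions, which the paper dismisses as straightforward; that added detail is welcome but not a departure in method.
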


\begin{proof}
It it straightforward to see that any maximizer
must be in $\mathcal{D}$.
Suppose that $\psi^{(1)}$ and $\psi^{(2)}$
are two distinct maximizers.
Then, by concavity, every convex combination
of these is also a maximizer, so setting
$v=\psi^{(2)}-\psi^{(1)}$ we have
$\partial_v^+G(\psi^*)=0$.
But the latter quantity writes $H(\psi^{(1)},v)v$,
where $H(\psi^{(1)},v)$ is a non-singular matrix,
so $v$ must be zero and we have
$\psi^{(1)}=\psi^{(2)}$.
\end{proof}

\begin{example}
\label{ex:irregularity-1D}
A concrete example of non-differentiability
can be constructed as follows.
Take two Dirac masses, respectively
at $y_1 = 0$ and $y_2 = 1$, and
$\rho$ the Lebesgue measure on $[0,1]$.
The two unrestricted Laguerre cells intersect at
$z_1(\psi) = \frac{1}{2}(1-\psi_2+\psi_1)$.
Consider the potential $\psi^0 = 
(\frac{1}{4},\frac{1}{4})$,
for which $z_1(\psi^0)$ coincides with
$\sqrt{\psi^0_1}=\frac{1}{2}$,
and let $\psi^t = \psi^0 + t(1,0)$, $t\in\R$.
The derivatives of $z_1(\psi^t)$ and
$\sqrt{\psi^t_1}$ at $t=0$ are respectively
$\frac{1}{2}$ and $1$.
Thanks to equation~\eqref{eq:extremities-of-RLag-1D},
at $t=0$ the quantity $b_1(\psi^t)$
has left-hand derivative $-1$ and
right-hand derivative $\frac{1}{2}$.
Since $a_1 \equiv 0$ and $\rho \equiv 1$
on $[0,1]$, we deduce that
$G_1(\psi^t)$ has the same left-hand
and right-hand derivatives at $t=0$ as $b_1(\psi^t)$,
which shows that $G$ is not differentiable at $\psi^0$.
\end{example}

In order to numerically solve (balanced)
semi-discrete optimal transport, Mérigot
\textit{et al.}~\cite{kitagawa2019convergence}
introduced a damped Newton algorithm;
in particular they use a variable step size approach
(backtracking line search) to make sure that,
throughout the descent, the Laguerre cells all
have a mass greater than a given threshold.
To adapt this algorithm to the partial version
of the problem, we have added a natural
extra constraint: the complement
of the union of the Laguerre cells must also
have mass greater than the chosen threshold.
Unfortunately, the resulting algorithm is
not as efficient as in the balanced case:
the convergence crucially depends on the
initial point and the gradient descent
often involves long plateau phases,
with the algorithm occasionally getting stuck.
These difficulties are likely linked
to the non-$\mathcal C^2$ nature of
the dual functional.
In order to avoid this kind of issues,
we introduce a regularization of the partial
optimal transport problem,
which we detail in the section below.

\section{Regularization of semi-discrete partial
transport in dimension one}

The partial optimal transport problem~\eqref{eq:partial-OT}
can be written
\begin{equation}
\label{eq:unregularized-SDOPT-1D}
\inf_{\gamma \in \Gamma_{\max}(\rho, \mu)}
\int_{\R\times\R} |x-y|^2 d\gamma(x,y)
= \inf_{\sigma \in \mathcal{M}_+(\R)}
W_2^2(\sigma,\mu) + F_0(\sigma),
\end{equation}
where the functional
$F_0 : \mathcal{M}_+(\R)\to\R\cup\{+\infty\}$
defined by
\[
F_0(\sigma) =
\begin{cases}
\int_{\R} \chi_{[0,1]}(\frac{d\sigma}{d\rho}) d\rho
\; &\mathrm{if} \; \sigma \ll \rho, \\

+\infty \; &\mathrm{otherwise},
\end{cases}
\]
ensures that $\sigma$ is dominated by $\rho$.
Here, $\chi_{[0,1]}$ denotes the
characteristic function of the interval $[0,1]$
in the sense of convex analysis.
In this formulation,
the optimal $\sigma$ corresponds to the
\textit{active part} of $\rho$, that is, the
part of $\rho$ that actually gets sent to
$\mu$ in the partial transport problem.
In the semi-discrete framework, the optimal $\sigma$
is simply the restriction of $\rho$ to the union
of the restricted Laguerre cells.

\subsection{The regularized functional}
In order to regularize
Problem~\eqref{eq:unregularized-SDOPT-1D},
we may replace $F_0$ by a functional $F$ of the form
\begin{equation*}
F(\sigma) =
\begin{cases}
\int_{\R} f(\frac{d\sigma}{d\rho}) d\rho
\; &\mathrm{if} \; \sigma \ll \rho, \\
+\infty \; &\mathrm{otherwise},
\end{cases}
\end{equation*}
where $f : \R \to [0,+\infty]$
is a convex function approximating $\chi_{[0,1]}$
(in a sense which will be clarified below),
and try to solve
\begin{equation}
\label{eq:regularized-SDOPT-1D-primal}
\inf_{\sigma \in \mathcal{M}_+(\R)}
W_2^2(\sigma,\mu) + F(\sigma).
\end{equation}

\begin{assumption}
\label{a:regularization-function-f}
Throughout the rest of this subsection,
we make the following assumptions on $f$,
which guarantee that $F$ be both convex and
lower semi-continuous for
the weak convergence of measures,
and that the domain of $F$ is included
in the set of positive measures.
The fact that $f$ is strictly convex, with
domain $[0,1]$ and subderivatives bounded below
by $1$ guarantees that $f^*$ has full domain and is
convex $\Cc^1$, non-decreasing, superlinear,
with non-decreasing derivative upper bounded by $1$.
\begin{enumerate}
    \item $f : \R \to [0,+\infty]$
    is proper, lower semi-continuous, and
    strictly convex on its domain.
    \item $\mathrm{dom}f = [0,1]$.
    \item The subderivatives of $f$ are all at least $1$.
\end{enumerate}
\end{assumption}

By classical arguments, the dual problem
of~\eqref{eq:regularized-SDOPT-1D-primal} reads
$\sup_{\psi \in \R^N}\mathcal K^f(\psi)$, where
the regularized functional $\mathcal{K}^f:\R^d\to\R$
is defined by
\begin{equation}
\label{eq:functional-K^f}
\mathcal K^f(\psi) =
- \sum_{i=1}^N \int_{\Lag_i(\psi)}
f^*(\psi_i - |x-y_i |^2) d\rho(x)
+ \sum_{i=1}^N \alpha_i \psi_i.
\end{equation}
Moreover, the following proposition --- which holds
in particular when $\rho$ and $f$ respectively satisfy
Assumption~\ref{a:density-SDOPT-1D} and
Assumption~\ref{a:regularization-function-f} ---
states that the regularized functional $\mathcal{K}^f$
is concave and indeed twice continuously differentiable on
a large domain containing its maxima.


\begin{proposition}
\label{prop:functional-reg-SDOPT-1D}
Suppose that $\rho$ is an absolutely continuous
probability measure and that $f$ satisfies
Assumption~\ref{a:regularization-function-f}.
Then, the functional $\mathcal K^f$ is both concave
and of class $\Cc^1$ on $\R^N$, with gradient
\begin{equation}
\label{eq:grad-K^f}
\nabla \mathcal K^f(\psi) = \alpha - G^f(\psi),
\end{equation}
where the quantity
\begin{equation}
\label{eq:regularized-mass-of-Lag-cell}
G^f_i(\psi) = \int_{\Lag_i(\psi)}
(f^*)'(\psi_i - |x-y_i|^2) \; d\rho(x)
\end{equation}
will be referred to as the \textnormal{regularized mass}
of the $i$\textsuperscript{th }Laguerre cell.

If in addition $\rho$ has compact, convex support,
with continuous density on this set,
and $(f^*)'$ is piecewise $\Cc^1$,
then $\mathcal{K}^f$ is of class $\Cc^2$
on the open set $\tilde{\mathcal{D}}$ of potentials
such that all Laguerre cells have strictly positive
$\rho$-measure..
On this set, the Hessian matrix $D^2 \mathcal K^f(\psi)$
is (symmetric) tridiagonal with coefficients
\begin{equation*}
[D^2 \mathcal K^f(\psi)]_{i,i+1} =
\frac{1}{2(y_{i+1}-y_i)} (f^*)'(
\psi_i - |z_i(\psi)-y_i|^2)
\rho(z_i(\psi))
\end{equation*}
and
\begin{align*}
[D^2 \mathcal K^f(\psi)]_{i,i} &=
-\int_{\Lag_i(\psi)}
(f^*)''(\psi_i - |x-y_i|^2) \; d\rho(x) \\
&\qquad
- [D^2 \mathcal K^f(\psi)]_{i,i-1}
- [D^2 \mathcal K^f(\psi)]_{i,i+1}.
\end{align*}
\end{proposition}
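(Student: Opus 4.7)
\emph{Reformulation and concavity.} The plan mirrors the argument of Proposition~\ref{prop:C1-regularity}. Assumption~\ref{a:regularization-function-f} entails that $f^*$ is convex, $\Cc^1$, non-decreasing with $(f^*)' \in [0,1]$. On the cell $\Lag_i(\psi)$, the defining inequality $\psi_i - |x-y_i|^2 \geq \psi_j - |x-y_j|^2$ combined with monotonicity of $f^*$ gives
\begin{equation*}
\mathcal{K}^f(\psi) = -\int_{\R} \max_{1\leq j\leq N} f^*\bigl(\psi_j - |x-y_j|^2\bigr)\,d\rho(x) + \sum_{i=1}^N \alpha_i \psi_i.
\end{equation*}
The integrand is a pointwise maximum of compositions of the convex function $f^*$ with affine maps, hence convex in $\psi$; integrating against $\rho \geq 0$ preserves this convexity, so $\mathcal{K}^f$ is concave.

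\emph{$\Cc^1$ regularity.} For $\rho$-almost every $x$, only one cell contains $x$, since pairwise intersections of Laguerre cells lie in hyperplanes and $\rho$ is absolutely continuous. The integrand is then differentiable in $\psi$ at such $x$, with gradient $(f^*)'(\psi_i - |x-y_i|^2)\,e_i$ on $\Lag_i(\psi)$. Since $(f^*)' \in [0,1]$ is uniformly bounded, dominated convergence permits differentiation under the integral, producing~\eqref{eq:grad-K^f} and~\eqref{eq:regularized-mass-of-Lag-cell}. A further dominated convergence argument, based on the fact that $\mathbf{1}_{\Lag_i(\psi)}(x)$ is $\rho$-a.e.\ continuous in $\psi$, yields continuity of $G^f$.

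\emph{$\Cc^2$ regularity on $\tilde{\mathcal{D}}$.} On this set the Laguerre cells are nondegenerate intervals $\Lag_i(\psi) = [z_{i-1}(\psi), z_i(\psi)]$, with the smooth dependence on $\psi$ given by~\eqref{eq:z_i}. I would rewrite
\begin{equation*}
G^f_i(\psi) = \int_{z_{i-1}(\psi)}^{z_i(\psi)} (f^*)'\bigl(\psi_i - |x-y_i|^2\bigr)\rho(x)\,dx
\end{equation*}
and apply Leibniz's rule. Because $z_i$ depends only on $\psi_i$ and $\psi_{i+1}$, the partial derivative $\partial_{\psi_j} G^f_i$ vanishes as soon as $|j-i| \geq 2$, producing the tridiagonal structure. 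The case $j = i+1$ moves only the right endpoint and, using $\partial_{\psi_{i+1}} z_i = -1/(2(y_{i+1}-y_i))$ together with the sign flip $D^2\mathcal{K}^f = -DG^f$, yields the stated off-diagonal coefficient. For $j = i$, both boundary terms appear alongside the interior derivative of $(f^*)'$, and the identity $\psi_i - |z_i - y_i|^2 = \psi_{i+1} - |z_i - y_{i+1}|^2$ --- valid because $z_i$ is precisely the point where the two Laguerre costs coincide --- rearranges the endpoint contributions into $-[D^2\mathcal{K}^f]_{i,i-1} - [D^2\mathcal{K}^f]_{i,i+1}$ and simultaneously enforces symmetry of the Hessian.

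\emph{Main technical point.} The most delicate step is controlling the interior integral $\int_{\Lag_i(\psi)} (f^*)''\bigl(\psi_i - |x-y_i|^2\bigr)\rho(x)\,dx$, because $(f^*)''$ is only defined almost everywhere and may have integrable singularities at the finitely many breakpoints of $(f^*)'$. In one dimension, the preimages of these breakpoints under $x \mapsto \psi_i - |x-y_i|^2$ are finite sets, hence $\rho$-negligible, so Leibniz's rule applies on each subinterval and the pieces recombine. Local integrability of $(f^*)''$, boundedness of $\rho$ on its compact support, and a change of variables near each critical level show that this integral is finite and continuous in $\psi$ by dominated convergence, which completes the $\Cc^2$ conclusion.
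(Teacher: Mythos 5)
Your proof is correct and follows essentially the same route as the paper's: concavity via convexity and monotonicity of $f^*$ applied to the maximum of affine functions of $\psi$, $\Cc^1$ regularity by dominated convergence using the uniform bound $(f^*)'\in[0,1]$, and the Hessian via Leibniz's rule on $\Lag_i(\psi)=[z_{i-1}(\psi),z_i(\psi)]$, with the identity $\psi_i-|z_i-y_i|^2=\psi_{i+1}-|z_i-y_{i+1}|^2$ yielding the tridiagonal symmetric structure. You merely spell out technical details (a.e.\ uniqueness of the active cell, the integrable singularities of $(f^*)''$) that the paper's much terser proof leaves implicit.
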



\begin{remark}
Since the domain of $f$ is the set of non-negative
numbers and $f$ is increasing on this set, we have
$f^*(t) \geq -f(0)$ for every $t\in\R$,
with equality if $t\leq 0$.
The regularizing functions we will define in
section~\ref{subsec:a-family-of-regularizing-functions}
will be zero at $x=0$, so their respective
convex conjugates will be non-negative,
with support in $[0,+\infty)$.
\end{remark}

\begin{proof}


The second sum in the right-hand side
of~\eqref{eq:functional-K^f} is linear in $\psi$,
so we only consider the first sum, which writes
$\int_\R f^*(\max_{1\leq i\leq N}\psi_i-|x-y_i|^2)d\rho(x)$.
Since $f^*$ is continuous and $\rho$ has compact
support, this quantity is differentiable
thanks to dominated convergence,
and its first derivatives are given
by~\eqref{eq:regularized-mass-of-Lag-cell},
Continuity of these derivatives is a direct
consequence of the dominated convergence theorem.
To check that $\mathcal{K}^f$ is convex,
we conveniently write the first integral as
$\int_\R f^*(-\psi^c)d\rho$.
The $c$-transform is convex in $\psi$ as a
minimum of affine linear functions, so since
$f^*$ is convex and non-decreasing, the integral
in question is a convex function of $\psi$,
and concavity of $\mathcal{K}^f$ follows.

Let us now suppose that $(f^*)'$ is piecewise $\Cc^1$.
Thanks to Leibniz integral rule,
we get the desired expression for the
second derivatives of $\mathcal{K}^f$,
and these are continuous thanks to the
additional assumption that the density
$\rho$ is continuous on its support.
\end{proof}

\subsection{A family of regularizing functions}
\label{subsec:a-family-of-regularizing-functions}
Let us now introduce a convenient
family of functions $(f_\eps)_{\eps>0}$
approximating $\chi_{[0,1]}$ and
satisfying all the assumptions in
Proposition~\ref{prop:functional-reg-SDOPT-1D}.
Define $f_\eps(t) = \eps^2 f(t)$, where
\begin{equation}
\label{eq:definition-of-f*}
f(t) =
\begin{cases}
\frac{1}{3} t^3
\quad &\mathrm{if} \quad 0 \leq t \leq 1, \\
+\infty \quad &\mathrm{otherwise},
\end{cases}
\end{equation}
so that, taking the derivatives to be zero
on the negative half-line,
\begin{align}
\label{eq:f*}
f^*(t) =
\begin{cases}
\frac{3}{2}t^{\frac{2}{3}}
\quad &\mathrm{if} \quad 0 < t < 1, \\
t - \frac{1}{3}
\quad &\mathrm{if} \quad t > 1, \\
0 \quad &\mathrm{if} \quad
\mathrm{otherwise,}
\end{cases}
\end{align}
\begin{align}
\label{eq:derivatives-of-f*}
\begin{aligned}[c]
(f^{*})'(t) =
\begin{cases}
\min\{\sqrt{t},1\}
\quad &\mathrm{if} \quad
t > 0, \\
0 \quad &\mathrm{otherwise},
\end{cases}
\end{aligned}
\;\;
\begin{aligned}[c]
(f^{*})''(t) =
\begin{cases}
\frac{1}{2\sqrt{t}}
\quad &\mathrm{if} \quad
0 < t < 1, \\
0 \quad &\mathrm{otherwise}.
\end{cases}
\end{aligned}
\end{align}


The derivatives of $f_\eps$ then read
\[
f_{\eps}^*(t) = \eps^2 f^*(\eps^{-2}t), \qquad
(f_{\eps}^*)'(t) = (f^*)'(\eps^{-2}t), \qquad
(f_{\eps}^*)''(t) = \eps^{-2}(f^*)''(\eps^{-2}t).
\]
Figure~\ref{fig:derivative-of-f*}
shows the graph of $(f^*_\eps)'$ for different values
of $\eps$, as well as their pointwise limit
$\mathbbm 1_{(0,+\infty)}$ as $\eps\to 0$.
Note that replacing $(f^*)'$ by this pointwise limit
in the right-hand side
of~\eqref{eq:regularized-mass-of-Lag-cell}
yields the mass of the \textit{restricted} Laguerre cell
(with respect to $\rho$). Similarly, taking
$f^*=\lim_{\eps\to0}f_\eps^*
=\mathbbm{1}_{\geq0}\mathrm{Id}$
in~\eqref{eq:functional-K^f}, we recover
the unregularized functional for the uni-dimensional
partial transport problem.

\subsubsection*{Notations}
For convenience, we write write
$\mathcal K^\eps$, $G^\eps$
for $\mathcal K^{f^\eps}$, $G^{f^\eps}$
respectively.

\begin{figure}
\centering
\includegraphics[width=0.5\textwidth]{
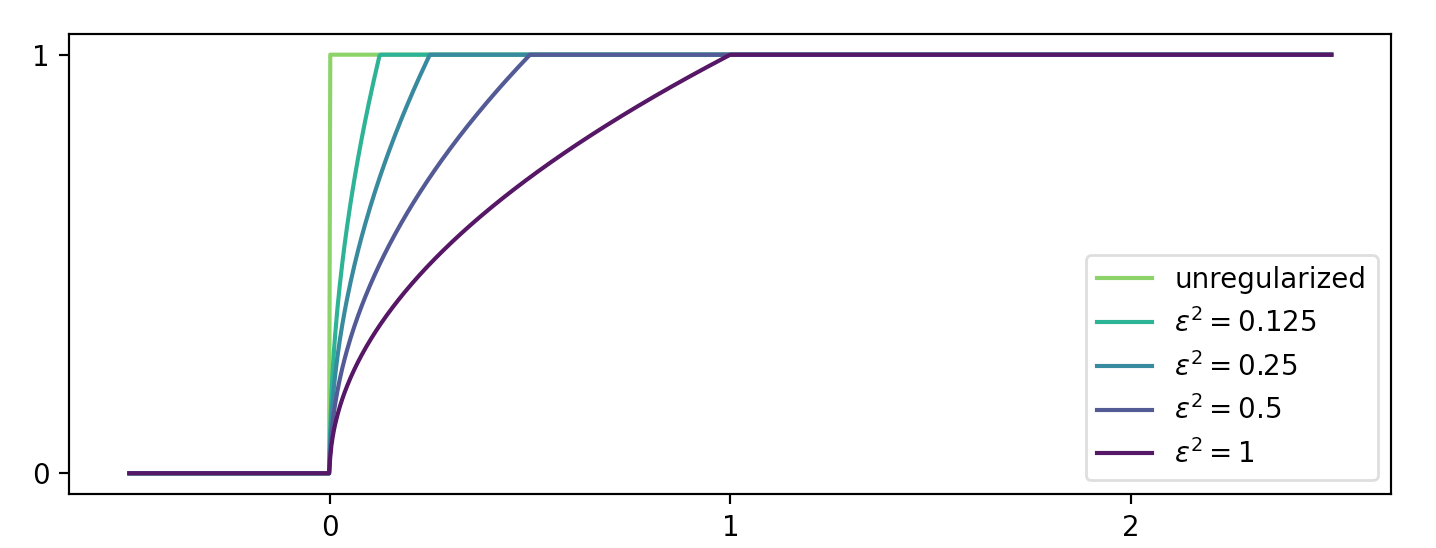}
\caption{Graphs of
$(f_\eps^*)'$ for
different values of
$\eps$,
as well as the limit graph
when $\eps\to 0$,
which corresponds to the
unregularized problem.}
\label{fig:derivative-of-f*}
\end{figure}

\subsection{Two-dimensional interpretation}
The choice of $f^\eps$ was motivated by
the fact that the corresponding regularized problem
is a two-dimensional version of the original one.
Indeed, define $\rho^\eps$ the probability measure
on $\R^2$ with density
$\rho^\eps(x) =
(2\eps)^{-1}\rho(x^1)
\mathbbm 1_{|x^2|<\eps}$
for $x=(x^1,x^2)$,
whose support is the rectangle
$\spt\rho \times [-\eps,\eps]$.
The discrete measure $\mu$ is naturally
extended to $\R^2$ by
identifying $y_i$ to $z_i=(y_i,0)$.
Since the $z_i$ are all aligned
on the first axis,
a straightforward computation shows that the
Laguerre cells of the
two-dimensional problem are
the vertical strips
$\Lag_i^{(2)}(\psi)
= \Lag_i(\psi) \times \R$
and that the corresponding
restricted Laguerre cells are
$\RLag_i^{(2)}(\psi) =
(\Lag_i(\psi) \times \R)
\cap \overline{B}^{(2)}_{z_i}(\sqrt{\psi_i})$.
As a result, we can write
$\RLag_i^{(2)}(\psi)$
as the following disjoint union
of vertical segments
\[
\RLag_i^{(2)}(\psi) =
\bigsqcup_{x^1\in\Lag_i(\psi)}
\{x^1\} \times
\overline{B}_0(\sqrt{\psi_i - |x^1-y_i|^2}).
\]
Since $\rho^\eps$ is concentrated on the
horizontal strip $\R \times (-\eps,\eps)$,
Fubini's theorem yields
\begin{align*}  
\rho^\eps(\RLag_i^{(2)}(\psi))
&= \frac{1}{2\eps} \int_{\Lag_i(\psi)}
2 \min\{\sqrt{\psi_i - |x^1-y_i|^2},\eps\}
\rho(x^1) dx^1 \\
&= \int_{\Lag_i(\psi)} \min\left\{
\eps^{-2}(\psi_i - |x^1-y_i|^2),1\right\}
d\rho(x^1) \\
&= \int_{\Lag_i(\psi)}
(f_\eps^*)'(\psi_i-|x^1-y_i|^2) d\rho(x^1),
\end{align*}
and this is precisely the regularized mass
of the one-dimensional Laguerre cell.
Figure~\ref{fig:2D-diagram} illustrates
this two-dimensional interpretation of
the regularized problem.

\begin{figure}
    \centering
    \includegraphics[width=\textwidth]{
    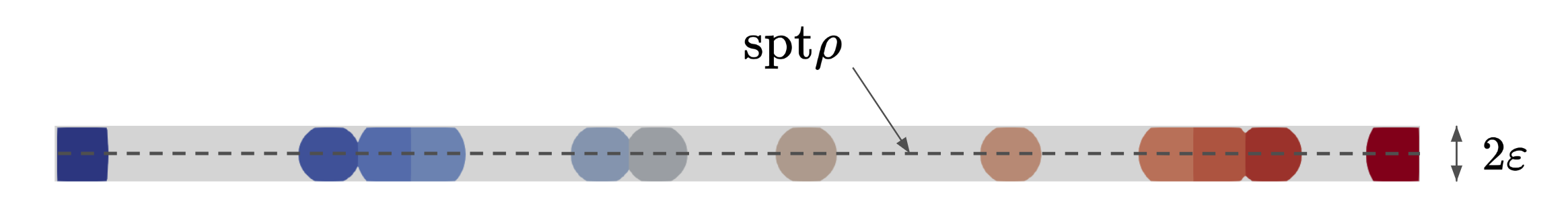}
    \caption{Visualization of the
    two-dimensional problem
    corresponding to the regularization.
    The long gray rectangle is the support
    of $\rho^\eps$ and the colored
    areas are the two-dimensional
    restricted Laguerre cells.}
    \label{fig:2D-diagram}
\end{figure}

\begin{remark}
\label{rem:cells-in-increasing-order}
Since the Laguerre of the
two-dimensional problem are of the form
$\Lag^{(2)}_i(\psi)
= \Lag_i(\psi)\times\R$,
they form a tessellation the plan into
$N$ vertical strips, whose corresponding
indices are increasing from left to right.
\end{remark}

We can, then, establish the following convergence result.

\begin{proposition}
\label{prop:gamma-convergence-regularized-SDOPT}
The functional
$-\mathcal{K}^\eps$
$\Gamma$-converges to $-\mathcal{K}$
as $\eps\to 0$.
\end{proposition}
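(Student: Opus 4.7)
The plan is to establish the stronger fact that $\mathcal{K}^\eps$ converges to $\mathcal{K}$ \emph{uniformly} on $\R^N$; combined with the continuity of both functionals, this yields $\Gamma$-convergence immediately.

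The first step would be a pointwise scalar bound
\[
0 \leq \max(t,0) - f_\eps^*(t) \leq \tfrac{\eps^2}{3}
\quad \text{for every } t \in \R,
\]
which I would prove by a case analysis on the sign and magnitude of $t$, using the scaling relation $f_\eps^*(t) = \eps^2 f^*(\eps^{-2} t)$ and the explicit formula~\eqref{eq:f*}. Both sides vanish for $t \leq 0$. For $t > \eps^2$ one has $f_\eps^*(t) = t - \eps^2/3$, which gives the equality $\max(t,0) - f_\eps^*(t) = \eps^2/3$. For $0 < t \leq \eps^2$ the function $h(t) := t - \tfrac{2 t^{3/2}}{3\eps}$ has derivative $1 - \sqrt{t}/\eps \geq 0$, so $h$ is non-decreasing on $[0,\eps^2]$ and its maximum on this interval is $h(\eps^2) = \eps^2/3$.

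The second step is to integrate this bound against $\rho$. Using the $c$-transform notation of Remark~\ref{rem:c-transform-of-vector}, I would rewrite the unregularized dual from~\eqref{eq:dual-functional-SDOPT-no-RLag} and the regularized dual~\eqref{eq:functional-K^f} as
\[
\mathcal{K}(\psi) = -\int_\R \max(-\psi^c(x),0)\, d\rho(x) + \sum_{i=1}^N \alpha_i \psi_i,
\quad
\mathcal{K}^\eps(\psi) = -\int_\R f_\eps^*(-\psi^c(x))\, d\rho(x) + \sum_{i=1}^N \alpha_i \psi_i.
\]
Applying the scalar bound pointwise in $x$ and using that $\rho$ is a probability measure then yields $0 \leq \mathcal{K}(\psi) - \mathcal{K}^\eps(\psi) \leq \eps^2/3$ uniformly in $\psi \in \R^N$. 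The conclusion follows from the standard fact that uniform convergence of continuous real-valued functions to a continuous limit implies $\Gamma$-convergence (here the continuity of both sides is ensured by Propositions~\ref{prop:C1-regularity} and~\ref{prop:functional-reg-SDOPT-1D}, after noting that the formulas defining $\mathcal K$ and $\mathcal K^\eps$ extend continuously to all of $\R^N$).

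I do not anticipate a significant obstacle: the heart of the argument is the pointwise scalar estimate of Step~1, after which integration and the passage to $\Gamma$-convergence are both immediate. The only mildly non-trivial observation is the monotonicity of $h$ on $[0,\eps^2]$, which pins down the constant $\eps^2/3$ in the bound.
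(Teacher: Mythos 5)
Your proof is correct, and it takes a genuinely different route from the paper's. The paper argues qualitatively: for arbitrary sequences $\eps_n\to0$, $\psi^n\to\psi$ it passes to the limit inside the integrals by dominated convergence, using the pointwise limit of $(f_{\eps_n}^*)'$ away from the negligible set $\{|x-y_i|^2=\psi_i\}$ (as written it checks this for the masses $G^{\eps_n}$, the same argument applying with $f_{\eps_n}^*$ in place of its derivative); this continuous-convergence argument gives both the liminf inequality and the recovery sequences. You instead prove the stronger, quantitative statement $\sup_{\psi}|\mathcal{K}^\eps(\psi)-\mathcal{K}(\psi)|\le\eps^2/3$ from the scalar estimate $0\le\max(t,0)-f_\eps^*(t)\le\eps^2/3$, and then invoke uniform convergence to a continuous limit. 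Your scalar estimate is right; note that you correctly work with $f^*(t)=\tfrac{2}{3}t^{3/2}$ on $[0,1]$, i.e.\ the actual conjugate of \eqref{eq:definition-of-f*} (consistent with \eqref{eq:derivatives-of-f*} and the introduction), rather than the misprinted exponent in \eqref{eq:f*}; and your rewriting of \eqref{eq:dual-functional-SDOPT-no-RLag} and \eqref{eq:functional-K^f} in terms of $-\psi^c$ matches the paper's own manipulation in the proof of Proposition~\ref{prop:functional-reg-SDOPT-1D}. One sign is off in the integrated bound: since $f_\eps^*\le\max(\cdot,0)$ and both functionals carry a minus sign in front of the integral, the correct ordering is $0\le\mathcal{K}^\eps(\psi)-\mathcal{K}(\psi)\le\eps^2/3$, not the reverse as you wrote; this is immaterial, since only the two-sided bound $|\mathcal{K}^\eps-\mathcal{K}|\le\eps^2/3$ (together with your remark that both formulas extend continuously to $\R^N$) is needed to conclude. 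What your approach buys is an explicit $O(\eps^2)$ rate on the functional values, pleasantly consistent with the quadratic rate of Theorem~\ref{thm:ODE-and-convergence}; what the paper's buys is brevity, needing nothing about $f^*$ beyond the pointwise limit of its derivative.
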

\begin{proof}
Suppose that
$\eps_n\xrightarrow[n\to\infty]{}0$
and $\psi^n\xrightarrow[n\to\infty]{}\psi$,
and denote $f_n := f_{\eps_n}$.
By dominated convergence, we have
\[
\int_{\R}
\mathbbm 1_{\Lag_i(\psi^n)}(x)
(f_n^*)'(\psi^n_i-|x-y_i|^2)\rho(x)dx
\xrightarrow[n\to\infty]{}
\int_{\R}
\mathbbm 1_{\Lag_i(\psi)}(x)
\mathbbm 1_{\{|x-y_i|^2\leq\psi_i\}}(x)\rho(x)dx.
\]
Indeed, the integrand is dominated
by the integrable density $\rho$,
the set $\{x\in\R :
|x-y_i|^2=\psi_i\}$
is negligible, and for
$x$ such that
$|x-y_i|^2\neq\psi_i$ we have
\begin{align*}
(f_n^*)'(\psi^n_i-|x-y_i|^2)
&= (f^*)'(\eps_n^{-2}(\psi^n_i-|x-y_i|^2)) \\
&\longrightarrow
\begin{cases}
\lim\limits_{+\infty} f' = 1
\quad \mathrm{if} \quad
|x-y_i|^2<\psi_i, \\
\lim\limits_{-\infty} f' = 0
\quad \mathrm{if} \quad
|x-y_i|^2>\psi_i.
\end{cases}
\qedhere
\end{align*}
\end{proof}

\section{An ODE
characterization of the regularized problem}

The maximizer $\psi^\eps$ of $\mathcal{K}^\eps$
is characterized by
$\nabla \mathcal K^\eps (\psi^\eps) = 0$,
that is, $G^\eps(\psi^\eps) = \alpha$,
where we recall that $G^\eps(\psi)$
is the vector of regularized masses
of the Laguerre cells corresponding to $\psi$.
Thanks to these optimal conditions,
the implicit function theorem yields an ODE
satisfied by $\eps \mapsto \psi^\eps$ which,
combined with some estimates on the derivatives
of $G^\eps$, allows to quantify the convergence
of $\psi^\eps$ to the solution $\psi^*$
of the unregularized problem.
For every $\eps>0$,
the uniqueness of $\psi^\eps$ is guaranteed
by Theorem~\ref{thm:regularity-multiD}
and Assumption~\ref{a:density-SDOPT-1D}.


\begin{theorem}
\label{thm:ODE-and-convergence}
Suppose that $\rho$ satisfies
Assumption~\ref{a:density-SDOPT-1D}.
Then there exists $\eps_0>0$ such that
for any $\eps\in(0,\eps_0)$,
the regularized functional $\mathcal{K}^\eps$
has a unique maximizer $\psi^\eps\in\R^N$.
The function $\eps \mapsto \psi^\eps$ is of class
$\Cc^1$ on $(0,\eps_0)$,
and on this interval we have
\begin{equation}
\label{eq:ODE}
[D_{\psi} G^\eps(\psi^\eps)]\dot\psi^\eps
+ (\partial_\eps G^\eps)(\psi^\eps) = 0,
\end{equation}
where $D G^\eps =
-D^2 \mathcal K^\eps$.
Yet again on this interval, it holds
\begin{equation}
\label{ineq:quadratic-rate}
\|\psi^\eps-\psi^*\|
\lesssim \eps^2,
\end{equation}
where $\psi^*\in(0,+\infty)^N$ is the maximizer
of the unregularized functional $\mathcal{K}$
and where $\lesssim$ hides a constant
depending only on $\rho$, on
the $y_i$ and on the $\alpha_i$.
\end{theorem}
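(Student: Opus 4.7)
The plan is to derive the ODE via the implicit function theorem applied to the stationarity relation $G^\eps(\psi^\eps)=\alpha$, and then obtain the quadratic rate by combining $\|\partial_\eps G^\eps(\psi^\eps)\|\lesssim\eps$ with a uniform-in-$\eps$ lower bound on $-D^2\mathcal{K}^\eps(\psi^\eps)$. For the setup I would exploit the two-dimensional interpretation of the regularized problem: $\mathcal{K}^\eps$ is (up to harmless constants) the Kantorovich functional of the \emph{balanced} semi-discrete 2D transport between $\rho^\eps$ and $\mu + (1-\|\alpha\|_1)\delta_{y_\infty}$, whose support $\spt\rho\times[-\eps,\eps]$ is a convex polygon. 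Theorem~\ref{thm:regularity-multiD} then applies in 2D and gives strict concavity of $\mathcal{K}^\eps$ on the open set $\tilde{\mathcal{D}}$ where every augmented Laguerre cell has positive mass, hence existence (by the coercivity argument of Proposition~\ref{prop:C1-regularity}), uniqueness of $\psi^\eps$, and invertibility of $D_\psi G^\eps(\psi^\eps)=-D^2\mathcal{K}^\eps(\psi^\eps)$. Joint $\Cc^1$-regularity of $(\eps,\psi)\mapsto G^\eps(\psi)$ is visible from the rescaling $(f_\eps^*)'(t)=(f^*)'(\eps^{-2}t)$, and the IFT then produces both the ODE~\eqref{eq:ODE} and the $\Cc^1$ dependence of $\psi^\eps$ on $\eps$.

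The easier of the two quantitative estimates is the bound on the mixed derivative. Direct differentiation gives $\partial_\eps(f_\eps^*)'(t)=-\sqrt{t}\,\eps^{-2}\mathbbm{1}_{0<t<\eps^2}$, so $\partial_\eps G_i^\eps(\psi)$ is supported on a transition band of $x$-width $O(\eps^2)$ around each sphere endpoint $y_i\pm\sqrt{\psi_i}$, with integrand of magnitude $O(\eps^{-1})$; together with the bound $\rho\leq\rho_{\max}$ this yields $\|\partial_\eps G^\eps(\psi^\eps)\|\lesssim\eps$.

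The main obstacle is the uniform lower bound on $-D^2\mathcal{K}^\eps(\psi^\eps)$. Using the explicit formulas of Proposition~\ref{prop:functional-reg-SDOPT-1D}, I would show that $-D^2\mathcal{K}^\eps(\psi)$ is the Laplacian of a weighted graph on $\{1,\dots,N,\infty\}$ restricted to the non-$\infty$ vertices, with edge weights $\rho(z_i)\,(f_\eps^*)'(\psi_i-|z_i-y_i|^2)/(2(y_{i+1}-y_i))$ between $i$ and $i+1$ and $\int_{\Lag_i}(f_\eps^*)''(\psi_i-|x-y_i|^2)\,d\rho$ between $i$ and $\infty$. A substitution $x=y_i\pm\sqrt{\psi_i}\mp\delta$ in the latter integral shows that as $\eps\to 0$ these weights converge to the multi-dimensional Hessian entries of Theorem~\ref{thm:regularity-multiD} evaluated at $\psi^*$, namely $\rho(z_i(\psi^*))/(2(y_{i+1}-y_i))$ and $\rho(y_i\pm\sqrt{\psi_i^*})/(2\sqrt{\psi_i^*})$ when the relevant sphere point lies in $\Lag_i(\psi^*)$. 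The resulting "Laguerre graph" is connected: summing the optimality conditions at $\psi^*$ leaves inactive mass $1-\|\alpha\|_1>0$, so every connected component of the cell-adjacency graph must contain at least one vertex adjacent to $\infty$ through a sphere boundary. The restricted Laplacian then has a strictly positive spectrum bounded below by some $\lambda_0=\lambda_0(\rho,y,\alpha)>0$, and by continuity $-D^2\mathcal{K}^\eps(\psi^\eps)\succeq(\lambda_0/2)I$ for $\eps$ small.

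Combining these estimates in $\dot\psi^\eps=-[D_\psi G^\eps(\psi^\eps)]^{-1}\partial_\eps G^\eps(\psi^\eps)$ gives $\|\dot\psi^\eps\|\lesssim\eps$ on $(0,\eps_0)$, so $\psi^\eps$ is Cauchy as $\eps\to 0$ with $\|\psi^\eps-\psi^{\eps'}\|\lesssim|\eps^2-\eps'^2|$; the $\Gamma$-convergence of Proposition~\ref{prop:gamma-convergence-regularized-SDOPT} identifies the limit as $\psi^*$, and letting $\eps'\to 0$ yields~\eqref{ineq:quadratic-rate}. The delicate point requires a small bootstrap: one first uses $\Gamma$-convergence to prove $\psi^\eps\to\psi^*$, then concludes the Hessian convergence, then deduces the uniform invertibility of $D_\psi G^\eps(\psi^\eps)$ on all of $(0,\eps_0)$ rather than just near a single $\eps$.
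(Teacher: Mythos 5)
Your overall architecture (implicit function theorem for the ODE, an $O(\eps)$ bound on $\partial_\eps G^\eps(\psi^\eps)$, a uniform lower bound on $-D^2\mathcal{K}^\eps(\psi^\eps)$, then integration of $\dot\psi^\eps$) matches the paper's, and your treatment of the ODE and of the mixed derivative is essentially the paper's (your band computation with $\partial_\eps(f_\eps^*)'$ is the 1D counterpart of the paper's disk-segment estimate; note that it silently uses a lower bound $\sqrt{\psi_i^\eps}\geq r>0$, and your $\Gamma$-convergence step silently uses an upper bound — both are supplied in the paper by Lemma~\ref{lem:psi^eps-bounded} and must be proved, not assumed). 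The genuine gap is in your main step, the uniform positive-definiteness of $H^\eps=-D^2\mathcal{K}^\eps(\psi^\eps)$. You propose to show that the Laplacian weights converge to the "limit Hessian entries at $\psi^*$" and then conclude "by continuity". But this limit object is precisely what fails to exist in dimension one: in the degenerate configuration where a ball endpoint coincides with an interface at the optimum, i.e.\ $z_i(\psi^*)=y_i+\sqrt{\psi_i^*}=y_{i+1}-\sqrt{\psi_{i+1}^*}$ (Case~3 of Lemma~\ref{lem:block-decomposition}, the same configuration as in Example~\ref{ex:irregularity-1D}), the regularized off-diagonal entry equals $\frac{\rho(z_i^\eps)}{2(y_{i+1}-y_i)}\min\{\sqrt{t_i^\eps}/\eps,1\}$ with $t_i^\eps=\psi_i^\eps-|z_i^\eps-y_i|^2\to 0$, and its behavior depends on the rate of $t_i^\eps$ relative to $\eps^2$; it need not converge, and likewise the sphere-boundary (diagonal-excess) terms for cells $i$ and $i+1$ need not converge, because the transition band of width $O(\eps^2)$ may lie on either side of $z_i^\eps$. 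Your phrase "when the relevant sphere point lies in $\Lag_i(\psi^*)$" excludes exactly this boundary case, so your connectivity argument for the limit graph does not cover it, and the "by continuity" lower bound breaks down precisely where the 1D functional loses $\Cc^2$ regularity — which is the situation the regularization is meant to handle.

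What is needed (and what the paper does, deliberately without using $\psi^\eps\to\psi^*$, cf.\ Remark~\ref{rem:a-priori-G^eps_u-indt-of-eps}) is a dichotomy valid for every $\eps\in(0,\eps_0)$: either the interface weight $|M^\eps_{i,i+1}|$ is bounded below by a fixed $\beta>0$, or both sphere weights $|M^\eps_{i,0}|,|M^\eps_{i+1,0}|$ are. The paper obtains this from the 2D strip geometry: Lemma~\ref{lem:psi^eps-bounded} forces every cell $\RLag_i^\eps$ to cross the full thickness of $\spt\rho^\eps$, whence $\H^1(E_i^\eps)+\H^1(R_i^\eps)\geq 2\eps$ and $\H^1(E_i^\eps)+\H^1(L_{i+1}^\eps)\geq 2\eps$ (Lemma~\ref{lem:M-eps-uniformly-connected}), and then Fiedler's inequality together with Lemmas~\ref{lem:Laplacian-matrix-dominance} and~\ref{lem:min-eigenvalue-of-submatrix} gives $\lambda_{\min}(H^\eps)\geq\frac{4\beta}{N+1}\sin^2(\frac{\pi}{2N+2})$ uniformly in $\eps$, with an explicit constant. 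You could in principle salvage your route by proving this dichotomy directly from the 1D formulas (splitting the transition band on the two sides of $z_i^\eps$ and showing the two contributions cannot both be small), but as written the convergence-plus-continuity argument does not go through, and the bootstrap you describe inherits this gap since it rests on the same Hessian-convergence claim.
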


Moreover, this behavior is widely observed
in numerical tests, as illustrated
in Figure~\ref{fig:regularization-error}.
The following example shows that
the quadratic convergence rate obtained
in this section is tight.

\begin{example}
\label{ex:quadratic-rate-is-tight}
To see that the quadratic rate given in
Theorem~\ref{thm:ODE-and-convergence}
is tight, consider the simple case
where $\rho$ is the uniform probability
measure on $[0,1]$ and where the discrete
measure is $\mu = \frac{1}{2}\delta_0$.
The optimal $\psi\in\R^1$ for the unregularized
problem is $\psi^*=\frac{1}{4}$, and
the optimal regularized mass of the
Laguerre cell for $\eps>0$ writes
\begin{align*}
\textstyle
\frac{1}{2}
=G^\eps(\psi^\eps)
=\frac{1}{2}\sqrt{\psi^\eps-\eps^2}
+ \frac{\psi^\eps}{2\eps}\arcsin
\left(\frac{\eps}{\sqrt{\psi^\eps}}
\right)
= \sqrt{\psi^\eps}
-\frac{\eps^2}{6\sqrt{\psi^\eps}} + o(\eps^2)
\end{align*}
asymptotically as $\eps\to 0^+$,
since $\sqrt{\psi^\eps}\to\frac{1}{2}$.
We thus have
$\sqrt{\psi^\eps}-\frac{1}{2}
\sim \frac{\eps^2}{3}$
and so
\begin{equation*}
\textstyle
\psi^\eps-\psi^*
= \left(\sqrt{\psi^\eps}+\frac{1}{2}\right)
\left(\sqrt{\psi^\eps}-\frac{1}{2}\right)
\sim \frac{\eps^2}{3}.
\end{equation*}
\end{example}

\subsection{Notations}
In this subsection, we exploit the 2D
interpretation of the regularization
introduced above, so the notations
$\Lag_i$ and $\RLag_i$
refer respectively to the \textit{two-dimensional}
unrestricted and restricted Laguerre cells.
In consideration of
Remark~\ref{rem:auxiliary-point},
the complement of
$\cup_{i=1}^N\RLag_i$
will be denoted $\RLag_\infty$.
To further lighten the notation,
we also introduce the following definitions
for the quantities related to the
optimal potential $\psi^\eps$.
For $i\in\Iint{1,N}\cup\{\infty\}$,
$\RLag^\eps_i$ will denote
the intersection of
$\RLag_i(\psi^\eps)$
with the rectangle
$\spt\rho_\eps
= \spt\rho
\times[-\eps,\eps]$, and for $i\neq\infty$,
$B_i^\eps$ will be the open disk
of center $(y_i,0)$ and of radius
$\sqrt{\psi_i^\eps}$.
The Hessian matrix
$-D G^\eps(\psi^\eps) =
-D^2\mathcal{K}^\eps(\psi^\eps)$
and the mixed derivative
$-(\partial_\eps G^\eps)(\psi^\eps) =
-\partial_\eps(\nabla \mathcal{K}^\eps)(\psi^\eps)$
will respectively be denoted $H^\eps$ and $q^\eps$,
so that equation~\eqref{eq:ODE} writes
$H^\eps\dot\psi^\eps = q^\eps$.
Note that there is a minus sign in front
of both quantities, so that in particular
$H^\eps$ is \textit{positive} semi-definite.
At last, we denote
$\alpha_{\min}=\min\{\alpha_1,\dots,\alpha_N\}$
and $\alpha_\infty=1-\sum_{i=1}^N\alpha_i$.

\subsection{Proof of the theorem}

In order to prove
Theorem~\ref{thm:ODE-and-convergence},
we first need to control
the minimum eigenvalue
of $H^\eps$ --- which is
a positive semi-definite matrix since
$\mathcal{K}^\eps$ is concave ---
independently of $\eps$,
which is the main difficulty.
The argument is based on results of
spectral graph theory, and we thus introduce
the concept of Laplacian matrices.
Only simple (i.e.~without loops),
\textit{undirected} graphs will be
considered throughout the paper.

\begin{definition}
A \textnormal{Laplacian matrix} is any symmetric
matrix $M=(M_{i,j})_{0\leq i,j\leq N}$
with non-negative off-diagonal entries
such that each line sums to zero. It corresponds
to the unique weighted graph with vertices
$\{1,\dots,N,\infty\}$
and weights $w_{i,j} = |M_{i,j}|$,
where for convenience we identify the indices
$0$ and $\infty$.
Two vertices $i$ and $j$ are called
\textnormal{neighbors} if $|M_{i,j}| > 0$,
and we say the matrix $M$ \textnormal{connected}
if the corresponding graph is connected.
\end{definition}

In this work, the indices of a square matrix
with $N+1$ rows will always range from $0$ to $N$,
whereas they will range from $1$ to $N$ for
a square matrix with $N$ rows.
As will be apparent in the the definition
of equation~\eqref{eq:M^eps-matrix},
in our case the index zero corresponds to the
fictitious point that was the subject of
Remark~\ref{rem:auxiliary-point}.
We therefore sometimes use the symbol $\infty$
for this index.
The well known properties given in the
following lemma can be found in Spielman's
book~\cite[Chapter~3]{spielman2019spectral}.

\begin{lemma}
\label{lem:Laplacian-matrices}
Let $M\in\R^{(N+1)\times(N+1)}$
be a Laplacian matrix.
Then $M$ is symmetric positive semi-definite
and its smallest eigenvalue is $0$,
with $\mathbbm 1$ (the vector of ones)
an associated eigenvector.
We thus denote
$0=\lambda_0(M)\leq \lambda_1(M)
\leq \dots \leq \lambda_N(M)$ its
eigenvalues with multiplicities.
The second value is given by
\begin{equation}
\label{eq:1st-eigenvalue-of-Laplacian-matrix}
\lambda_1(M) =
\min_{\substack{\|v\|=1\\v\perp\mathbbm 1}}v^T M v,
\end{equation}
and is non-zero if and only if the weighted graph
associated to $M$ is connected.
\end{lemma}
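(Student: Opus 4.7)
The plan rests on a single identity that makes every claim routine: for any $v\in\R^{N+1}$,
\[
v^{T} M v \;=\; \sum_{0\leq i<j\leq N} w_{i,j}\,(v_i-v_j)^2,
\]
where $w_{i,j}=|M_{i,j}|$. Once this is in hand, positive semi-definiteness, the fact that $0$ is an eigenvalue with eigenvector $\mathbbm{1}$, the Courant--Fischer characterization of $\lambda_1(M)$, and the connectivity criterion all fall out almost mechanically.

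I would first record what the two defining properties of $M$ give for free. Symmetry is built into the definition; and since each row sums to $0$, the vector $\mathbbm{1}$ belongs to the kernel of $M$, which already exhibits $0$ as an eigenvalue with associated eigenvector $\mathbbm{1}$. Next I would prove the identity above by a direct expansion: using $M_{i,i}=-\sum_{j\neq i}M_{i,j}$ to substitute for each diagonal entry, then writing $M_{i,j}=-w_{i,j}$ for $i\neq j$, one obtains
\[
v^{T}Mv \;=\; \sum_i v_i^2\sum_{j\neq i} w_{i,j} \;-\; 2\sum_{i<j} w_{i,j}v_iv_j
\;=\; \sum_{i<j} w_{i,j}\bigl(v_i^2+v_j^2-2v_iv_j\bigr),
\]
which is the asserted sum of squares. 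In particular $v^T M v\geq 0$, so $M$ is symmetric positive semi-definite, $0$ is its smallest eigenvalue, and the eigenvalues may be ordered as stated.

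For the min formula I would quote the Courant--Fischer theorem: since the eigenspace associated with $\lambda_0(M)=0$ contains the line spanned by $\mathbbm{1}$, the next eigenvalue is given by the Rayleigh quotient minimized over the orthogonal complement of that line intersected with the unit sphere, which is exactly~\eqref{eq:1st-eigenvalue-of-Laplacian-matrix}. (If the multiplicity of $0$ is larger than one, this minimum is still $0$, which is consistent with what follows.)

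The only step requiring a bit of care is the equivalence ``$\lambda_1(M)>0 \Leftrightarrow$ the graph is connected.'' In one direction, if the graph splits into connected components $C_1,\dots,C_k$ with $k\geq 2$, I would construct $v=\mathbbm{1}_{C_1}-\tfrac{|C_1|}{N+1}\mathbbm{1}$: this vector is nonzero, orthogonal to $\mathbbm{1}$, and constant on each component, so every term $w_{i,j}(v_i-v_j)^2$ of the identity vanishes (an edge $w_{i,j}>0$ requires $i$ and $j$ to lie in the same component), giving $v^{T}Mv=0$ and hence $\lambda_1(M)=0$. Conversely, if $\lambda_1(M)=0$, the minimum in~\eqref{eq:1st-eigenvalue-of-Laplacian-matrix} is attained by some unit vector $v\perp\mathbbm{1}$ with $v^{T}Mv=0$; the identity then forces $v_i=v_j$ whenever $w_{i,j}>0$, and propagating this equality along paths shows $v$ is constant on each connected component. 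If the graph were connected, $v$ would be a multiple of $\mathbbm{1}$, contradicting both $v\perp\mathbbm{1}$ and $\|v\|=1$. This bookkeeping along connected components is really the only place where anything beyond algebra is needed, and I do not anticipate any further obstacle.
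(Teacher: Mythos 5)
Your proof is correct and complete; the paper itself does not prove this lemma but simply cites Spielman's book, and your argument (the sum-of-squares identity $v^TMv=\sum_{i<j}w_{i,j}(v_i-v_j)^2$, Courant--Fischer for $\lambda_1$, and indicator-type test vectors constant on connected components) is exactly the standard proof found there, so nothing further is needed. One small remark: you write $M_{i,j}=-w_{i,j}\leq 0$ for $i\neq j$, whereas the paper's definition literally says ``non-negative off-diagonal entries''; your sign convention is the right one, since it is the only one compatible with the lemma's positive semi-definiteness claim and with the matrices $M^\eps$ actually used later in the paper, so the definition's wording is a typo rather than a gap in your argument.
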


Because of the correspondence
mentioned above between Laplacian matrices and
weighted graphs on $\{1,\dots,N,\infty\}$,
for any such graph $\mathcal{G}$ we denote
$0=\lambda_0(\mathcal{G})\leq
\lambda_1(\mathcal{G})
\leq \dots \leq \lambda_N(\mathcal{G})$
the eigenvalues with multiplicities
of its Laplacian matrix.
The following lemma states that
the second smallest eigenvalue is
non-decreasing in the weights of the graph.

\begin{lemma}
\label{lem:Laplacian-matrix-dominance}
A Laplacian matrix $M$ is said to dominate
another Laplacian matrix $M'$ if
$|M_{i,j}| \geq |M_{i,j}'|$ for
all distinct indices $i \neq j$.
If this is the case,
then $\lambda_1(M) \geq \lambda_1(M')$.
\end{lemma}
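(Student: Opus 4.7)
The plan is to reduce the comparison of $\lambda_1(M)$ and $\lambda_1(M')$ to a pointwise comparison of their quadratic forms, via the Rayleigh-quotient characterization~\eqref{eq:1st-eigenvalue-of-Laplacian-matrix} from Lemma~\ref{lem:Laplacian-matrices}. The essential ingredient is the classical ``edge-energy'' representation of the quadratic form of a Laplacian: for any Laplacian matrix $M$ with weights $w_{i,j} = |M_{i,j}|$, one has the identity
\[
v^T M v \;=\; \sum_{0 \leq i < j \leq N} w_{i,j}\,(v_i - v_j)^2 \qquad \forall\, v \in \R^{N+1}.
\]
This is a routine computation: the row-sum-zero condition pins down each diagonal entry in terms of the off-diagonal ones (up to sign), and substituting this into the bilinear expansion of $v^T M v$ regroups the terms into the displayed sum of squared differences. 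The same identity of course holds for $M'$ with the weights $w'_{i,j} = |M'_{i,j}|$.

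With this identity in hand, the lemma becomes an immediate monotonicity statement. The domination hypothesis reads $w_{i,j} \geq w'_{i,j}$ for every pair $i \neq j$, and since each squared difference $(v_i - v_j)^2$ is non-negative, we obtain termwise
\[
v^T M v \;\geq\; v^T M' v \qquad \forall\, v \in \R^{N+1}.
\]
Restricting to unit vectors orthogonal to $\mathbbm{1}$ and taking the infimum of both sides, formula~\eqref{eq:1st-eigenvalue-of-Laplacian-matrix} directly yields $\lambda_1(M) \geq \lambda_1(M')$.

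There is essentially no obstacle here: the only calculation is the verification of the edge-energy identity, which is standard. The conceptual content is simply that Lemma~\ref{lem:Laplacian-matrices} writes $\lambda_1$ as a minimum of a functional that is \emph{non-decreasing} in each off-diagonal weight, so entrywise monotonicity of the matrix transfers directly to monotonicity of the spectral gap.
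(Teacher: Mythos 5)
Your proof is correct and follows essentially the same route as the paper: both reduce the claim to the pointwise inequality $v^T M v \geq v^T M' v$ and then invoke the Rayleigh-quotient characterization~\eqref{eq:1st-eigenvalue-of-Laplacian-matrix}. The only cosmetic difference is that you establish the pointwise comparison through the edge-energy identity, whereas the paper observes that $M-M'$ is itself a Laplacian matrix and hence positive semi-definite.
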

\begin{proof}
Suppose $M$ dominates $M'$.
Then $M-M'$ is semi-definite positive,
so writing $M=M'+(M-M')$ and distributing
the minimum in~\eqref{eq:1st-eigenvalue-of-Laplacian-matrix}
yields the desired inequality.
\end{proof}

Now, let $M^\eps\in\R^{(N+1)\times(n+1)}$ be the
tridiagonal, symmetric matrix defined by
\begin{align}
\label{eq:M^eps-matrix}
\begin{cases}
\displaystyle
M_{i,i+1}^\eps =
\frac{-1}{2(y_{i+1}-y_i)}
\int_{\RLag_i^\eps
\cap \RLag_{i+1}^\eps}
\rho^\eps(x) d\H^1(x)
\quad &\mathrm{for} \quad i\in\Iint{1,N-1}, \\
\displaystyle
M_{i,0}^\eps =
\frac{-1}{2\sqrt{\psi_i^\eps}}
\int_{\RLag_i^\eps
\cap \partial B_i^\eps}
\rho^\eps(x) d\H^1(x)
\quad &\mathrm{for} \quad i\in\Iint{1,N}, \\
M_{i,i}^\eps = - \sum_{j\neq i} M_{i,j}^\eps
\quad &\mathrm{for} \quad i\in\Iint{0,N},
\end{cases}
\end{align}
with all other coefficients null, and call
$\mathcal{G}^\eps$ the corresponding weighted
graph on $\{1,\dots,N,\infty\}$.
Indeed, it is straightforward to check that
$M^\eps$ is a Laplacian matrix.
Note that by Theorem~\ref{thm:regularity-multiD}
and Proposition~\ref{prop:functional-reg-SDOPT-1D},
the submatrix obtained by removing the line and
the column of index $0$ --- which correspond
the auxiliary point --- is precisely
the tridiagonal matrix $H^\eps$,
the minimal eigenvalue of which we seek
to bound from below.
Thanks to the following lemma, it suffices to
find a lower bound for $\lambda_1(M^\eps)$.

\begin{lemma}
\label{lem:min-eigenvalue-of-submatrix}
Let $M=(M_{i,j})_{0\leq i,j\leq N}$ be a
Laplacian matrix, and let $\tilde M$
be the submatrix obtained by removing the
line and the column of index $0$. Then
$\lambda_{\min}(\tilde M)$,
the smallest eigenvalue of $\tilde M$,
satisfies
\begin{equation*}
\lambda_{\min}(\tilde M)
\geq \frac{\lambda_1(M)}{N+1}.
\end{equation*}
\end{lemma}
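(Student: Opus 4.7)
The plan is to compare the Rayleigh quotients of $\tilde M$ and of $M$ restricted to $\mathbbm{1}^\perp$, using a zero-extension together with the projection onto $\mathbbm{1}^\perp$. First, for any $v\in\R^N$, I introduce the zero-extension $\hat v=(0,v_1,\dots,v_N)\in\R^{N+1}$. Since $\tilde M$ is literally the principal submatrix of $M$ on rows and columns indexed by $\{1,\dots,N\}$, I immediately get the identity $\hat v^T M\hat v=v^T\tilde M v$.

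The key algebraic observation is that, by the Laplacian property, $M\mathbbm{1}=0$, so the Rayleigh quadratic form of $M$ is invariant under translation along $\mathbbm{1}$. Writing $P=I-\frac{1}{N+1}\mathbbm{1}\mathbbm{1}^T$ for the orthogonal projection onto $\mathbbm{1}^\perp$, a short computation using $M\mathbbm{1}=0=\mathbbm{1}^T M$ yields $(P\hat v)^T M(P\hat v)=\hat v^T M\hat v$. Since $P\hat v\in\mathbbm{1}^\perp$, the variational formula~\eqref{eq:1st-eigenvalue-of-Laplacian-matrix} from Lemma~\ref{lem:Laplacian-matrices} gives
\begin{equation*}
v^T\tilde M v \;=\; (P\hat v)^T M(P\hat v) \;\geq\; \lambda_1(M)\,\|P\hat v\|^2.
\end{equation*}

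It then remains to bound $\|P\hat v\|^2$ from below. A direct computation gives $\|P\hat v\|^2=\|\hat v\|^2-\frac{(\hat v^T\mathbbm{1})^2}{N+1}=\|v\|^2-\frac{(v^T\mathbbm{1}_N)^2}{N+1}$, and Cauchy--Schwarz provides $(v^T\mathbbm{1}_N)^2\leq N\|v\|^2$, so $\|P\hat v\|^2\geq \frac{\|v\|^2}{N+1}$. Combining these bounds yields $v^T\tilde M v\geq \frac{\lambda_1(M)}{N+1}\|v\|^2$ for every $v\in\R^N$, and taking $v$ to be a unit eigenvector of $\tilde M$ associated with $\lambda_{\min}(\tilde M)$ gives the announced inequality.

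There is essentially no hard step here: the identification $\hat v^T M\hat v=v^T\tilde M v$, the translation invariance coming from $M\mathbbm{1}=0$, and the Cauchy--Schwarz estimate are all elementary, and the only point to verify carefully is that the factor $1/(N+1)$ is sharp --- the worst case being when $v$ is proportional to $\mathbbm{1}_N$, which is precisely the direction killed by $P$ after extending by a zero.
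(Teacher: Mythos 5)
Your proof is correct and follows essentially the same route as the paper's: zero-extension of $v$, the identity $\hat v^T M\hat v=(P\hat v)^T M(P\hat v)$ coming from $M\mathbbm 1=0$, the variational characterization of $\lambda_1(M)$ on $\mathbbm 1^\perp$, and the Cauchy--Schwarz bound $\|P\hat v\|^2\geq\|v\|^2/(N+1)$. No gaps to report.
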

\begin{proof}
Denote $P$ the
orthogonal projection from $\R^{N+1}$ onto
$\mathbbm 1^\perp\subseteq \R^{N+1}$,
let $u\in\R^N$ and define
$\underline{u} = (0,u_1,\dots,u_N)\in\R^{N+1}$.
Since $M \mathbbm 1 = 0$, we have
\begin{equation}
\label{ineq:lower-bd-for-Rayleigh-M-tilde}
u^T \tilde M u
= \underline{u}^T M \underline{u}
= (P\underline{u})^T M (P\underline{u})
\geq \lambda_1(M) \|P\underline{u}\|^2.
\end{equation}
Moreover, using the additional notation
$e=(N+1)^{-1/2}\mathbbm 1\in\mathbb S^{N+1}$,
we find
\begin{equation*}
\|P\underline{u}\|^2
= \|\underline{u}-(e^T\underline{u})e\|^2
= \|\underline{u}\|^2 - (e^T\underline{u})^2
\end{equation*}
and $(e^T\underline{u})^2 =
(N+1)^{-1}(\sum_{i=1}^Nu_i)^2
\leq N(N+1)^{-1}\|u\|^2$
by Cauchy-Schwarz inequality,
so $\|P\underline{u}\|^2 \geq (N+1)^{-1}\|u\|^2$.
Inequality~\eqref{ineq:lower-bd-for-Rayleigh-M-tilde}
and the characterization
of the smallest eigenvalue of $\tilde M$
as the minimum of the Rayleigh quotient
$\frac{u^T\tilde Mu}{\|u\|^2}$ over all non-zero
vectors $u$ yields the desired inequality.
\end{proof}

Our strategy for getting a lower bound on 
$\lambda_1(M^\eps)$ which is independent of 
$\eps$ is to compare the Laplacian matrix 
$M^\eps$ to that of a uniformly weighted graph.
In order to do so,
we need to control from below the absolute values
of the non-zero off-diagonal entries $|M^\eps_{i,i+1}|$
and $|M^\eps_{i,0}|$.
This in turn requires us to control the components
of $\psi^\eps$.
The object of the following lemma is to show that
for $\eps$ small enough,
these components are uniformly
bounded away from zero and infinity.
Roughly speaking, the radii $\sqrt{\psi^\eps_i}$
of the balls $B_i^\eps$ cannot be two small
because the cells have mass bounded below and
because the density is upper bounded ;
they cannot be too large either because
the auxiliary cell is not empty.

\begin{lemma}
\label{lem:psi^eps-bounded}
If $0<\eps<1$,
then for all $i\in\Iint{1,N}$ we have
\[
r \leq
\sqrt{\psi_i^\eps} \leq R,
\]
where
$r= \frac{\alpha_{\min}}{2\rho_{\max}}$
and
$R = \left(1 + \diam\left(
\spt\rho\cup\{y_1,\dots,y_N\}\right)^2
\right)^{\frac{1}{2}}$.
Moreover, for any \mbox{$0 < \eps < r$}, the
cells $\RLag_i^\eps$ all intersect the
horizontal lines of \mbox{ordinate $\pm\eps$.}
In consideration of these two properties,
we define $\eps_0=\min(1,r)$.
\end{lemma}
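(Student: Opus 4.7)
The plan is to extract bounds on each $\sqrt{\psi_i^\eps}$ from the first-order condition $G^\eps(\psi^\eps)=\alpha$, exploiting alternately its one-dimensional integral form~\eqref{eq:regularized-mass-of-Lag-cell} and its two-dimensional interpretation $G_i^\eps(\psi^\eps) = \rho^\eps(\RLag_i^{(2)}(\psi^\eps))$.

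For the lower bound, I would use~\eqref{eq:regularized-mass-of-Lag-cell}: since $(f^*)'\leq 1$ everywhere and the integrand vanishes outside $[y_i-\sqrt{\psi_i^\eps},y_i+\sqrt{\psi_i^\eps}]$, one immediately has $\alpha_i \leq 2\rho_{\max}\sqrt{\psi_i^\eps}$, hence $\sqrt{\psi_i^\eps}\geq \alpha_{\min}/(2\rho_{\max}) = r$.

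For the upper bound, I would exploit the fact that $\sum_{i=1}^N G_i^\eps(\psi^\eps) = \|\alpha\|_1 < 1 = \rho^\eps(\spt\rho\times[-\eps,\eps])$, which forces the existence of a point $x^*\in\spt\rho\times[-\eps,\eps]$ missed by every $\RLag_i^{(2)}(\psi^\eps)$. A short convexity argument --- if $x^*$ lay in some disk $B_j^\eps$, then letting $i_0$ be the index of the 2D Laguerre strip $\Lag_{i_0}\times\R$ containing $x^*$, the defining inequality would give $|x^*-(y_{i_0},0)|^2-\psi_{i_0}^\eps \leq |x^*-(y_j,0)|^2-\psi_j^\eps \leq 0$, so $x^*\in B_{i_0}^\eps$ and thus $x^*\in\RLag_{i_0}^{(2)}(\psi^\eps)$, a contradiction --- shows that $x^*$ lies outside every disk $B_i^\eps$. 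Therefore for every $i$,
\[
\psi_i^\eps \leq |x^*-(y_i,0)|^2 \leq D^2+\eps^2 \leq D^2+1 = R^2,
\]
where $D = \diam(\spt\rho\cup\{y_1,\dots,y_N\})$.

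For the intersection statement, I would argue by contradiction. Suppose $\RLag_i^\eps$ does not meet $\{x^2=\eps\}$ (the case $-\eps$ being symmetric). Every $x^1\in\Lag_i(\psi^\eps)\cap\spt\rho$ then satisfies $(x^1-y_i)^2 > \psi_i^\eps - \eps^2 > 0$, the positivity coming from $\psi_i^\eps \geq r^2 > \eps^2$. Since both $\Lag_i$ and $\spt\rho$ are intervals, their intersection is an interval avoiding an open neighborhood of $y_i$ and must therefore lie on a single side of $y_i$. Plugging this one-sided confinement into~\eqref{eq:regularized-mass-of-Lag-cell} --- on which the integrand reduces to the non-saturated branch $\eps^{-2}(\psi_i^\eps-(x^1-y_i)^2)$ --- and crudely bounding the resulting one-sided integral $\int_{\sqrt{\psi_i^\eps-\eps^2}}^{\sqrt{\psi_i^\eps}}(\psi_i^\eps-s^2)\,ds$ by $\eps^4/\sqrt{\psi_i^\eps}$, I obtain $\alpha_i \leq \rho_{\max}\eps^2/\sqrt{\psi_i^\eps} \leq \rho_{\max}\eps^2/r$. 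For $\eps<r$, this quantity is strictly less than $\rho_{\max} r = \alpha_{\min}/2$, contradicting $\alpha_i\geq\alpha_{\min}$. The main subtlety, rather than a genuine obstacle, is this one-sidedness reduction, without which one would pick up a spurious factor of two that would no longer close the contradiction at the threshold $\eps<r$.
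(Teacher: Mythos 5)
Your strategy is essentially the paper's: the lower bound comes from $G_i^\eps(\psi^\eps)=\alpha_i$ together with the fact that the integrand is at most $\rho_{\max}$ and supported in an interval of length $2\sqrt{\psi_i^\eps}$; the upper bound comes from a point of $\spt\rho^\eps$ not covered by the restricted cells (your explicit argument that such a point lies outside every ball $B_i^\eps$ is exactly what the paper uses implicitly when it picks a point of $\RLag_\infty^\eps$); and the intersection statement is proved by the same contradiction, an $O(\eps^2)$ upper bound on the mass of a cell missing the lines $x^2=\pm\eps$. The paper phrases these mass estimates through the thickened density $\rho^\eps$ and areas of disk segments, which by Fubini is the same computation as your one-dimensional integrals, so this is the same proof in a different coordinate system.

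Two corrections to the last part. First, the non-saturated branch of $(f_\eps^*)'$ is $\eps^{-1}\sqrt{\psi_i^\eps-(x^1-y_i)^2}$, not $\eps^{-2}\bigl(\psi_i^\eps-(x^1-y_i)^2\bigr)$: by~\eqref{eq:derivatives-of-f*}, $(f_\eps^*)'(t)=\min\{\sqrt{t}/\eps,1\}$. Your expression is pointwise \emph{smaller} than the true integrand on the confined region, so the step $\alpha_i\le\rho_{\max}\eps^{-2}\int_{\sqrt{\psi_i^\eps-\eps^2}}^{\sqrt{\psi_i^\eps}}(\psi_i^\eps-s^2)\,ds$ is not justified as written. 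The repair is immediate and yields the same final bound: on the confined region $\sqrt{\psi_i^\eps-s^2}\le\eps$, and the region has length $\sqrt{\psi_i^\eps}-\sqrt{\psi_i^\eps-\eps^2}\le\eps^2/\sqrt{\psi_i^\eps}$, whence $\alpha_i\le\rho_{\max}\,\eps^{-1}\cdot\eps\cdot\eps^2/\sqrt{\psi_i^\eps}=\rho_{\max}\eps^2/\sqrt{\psi_i^\eps}$. Second, your closing remark is inaccurate: the one-sided confinement is a pleasant refinement but not needed. Keeping both sides only doubles the bound, giving $\alpha_{\min}\le 2\rho_{\max}\eps^2/r$, which for $\eps<r$ is still strictly below $2\rho_{\max}r=\alpha_{\min}$, so the contradiction closes anyway; this is precisely the paper's observation that its rough disk-segment estimate suffices under the sole assumption $\eps<r$.
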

\begin{proof}
We first prove the lower bound on
$\sqrt{\psi_i^\eps}$. To do so,
we note that $\RLag_i^\eps$
is included in a rectangle of width $2\eps$
and of height $2\sqrt{\psi_i^\eps}$.
Since $G_i^\eps(\psi^\eps) = \alpha_i$
and since $\rho^\eps$ is bounded above
by $\frac{\rho_{\max}}{2\eps}$, we get
$\alpha_i\leq\frac{\rho_{\max}}{2\eps}
\times 2\eps\times2\sqrt{\psi_i^\eps}$,
which yields the lower bound.
We now derive the upper bound.
The cell $\RLag_\infty^\eps$
associated to the auxiliary point has non-zero
$\rho^\eps$-measure $\alpha_{\infty} > 0$,
so it must intersect the support of $\rho^\eps$.
Fix $(x,w)$ a point in this intersection.
It satisfies
$\psi^\eps_i \leq |x-y_i|^2+|w-0|^2$
for all $i\in\Iint{1,N}$.
The first square in the left-hand side is bounded by
by the squared diameter of
$\spt\rho\cup\{y_1,\dots,y_N\}$,
while the second one is bounded by $\eps^2$,
and the upper bound follows.

Let us now show that for $\eps$ small enough,
the optimal cells all intersect the horizontal
lines $\R\times\{\pm\eps\}$.
Fix $\eps\in(0,r)$ and suppose by contradiction
that, for some $i\in\Iint{1,N}$, the cell
$\RLag_i^\eps$ does not
intersect these horizontal lines.
Then the $\rho^\eps$-measure of the cell is
bounded above by $\frac{\rho_{\max}}{2\eps}$
times the area of the two disk segments shown
in Figure~\ref{fig:disk-segments}
(taking $\omega=\sqrt{\psi^\eps_i}$).
Recalling that $\sqrt{\psi^\eps_i}\geq r$,
each of these disk segments has area
bounded above by $\frac{\pi\eps^3}{2r}$,
so we have
$\alpha_{\min} \leq \rho^\eps(\RLag^\eps_i)
\leq 2\times\frac{\rho_{\max}}{2\eps}\times
\frac{\pi\eps^3}{2r} =
\frac{\pi\rho_{\max}}{2r}\eps^2$.
As a result, $\eps\geq
\sqrt{\frac{2r\alpha_{\min}}{\pi\rho_{\max}}}
= \sqrt{\frac{4}{\pi}}r>r$,
which is a contradiction.
Note that since we only assumed $\eps < r$,
the rougher bound mentioned in
Figure~\ref{fig:disk-segments} actually
suffices to get the wanted contradiction.
\end{proof}

\begin{figure}
\centering
\begin{tikzpicture}[scale = 2.]
    \draw[black, opacity = 0.8] (0., -1.) -- (6., -1.) -- (6., 1.) -- (0, 1.) -- cycle;
    \draw[dotted,black,opacity=1,
    line width=1pt] (1,1) -- (1,-1);
    \draw[dotted,black,opacity=1,
    line width=1pt] (5,1) -- (5,-1);
    \draw[dashed,black,opacity=0.5] (0,0) -- (6,0);
    \fill (3,0) circle[radius=0.7pt];

    \draw[decorate,decoration={brace,raise=5pt,amplitude=5pt,mirror}]
    (0,1) -- node[left=10pt]{$\varepsilon$} (0,0) ;
    \draw[decorate,decoration={brace,raise=5pt,amplitude=5pt}]
    ({3-sqrt(3)},1) -- node[]{} (3,0) ;
    \draw (2.3,0.8) node {$\omega$} ;

    \draw[decorate,decoration={brace,raise=5pt,amplitude=5pt,mirror}]
    ({3-sqrt(3)},1) --
    node[above=10pt]{$\omega-\sqrt{\omega^2-\varepsilon^2}$} (1,1) ;
    \draw[decorate,decoration={brace,raise=5pt,amplitude=5pt,mirror}]
    ({3-sqrt(3)},-1) --
    node[below=10pt]{$\sqrt{\omega^2-\varepsilon^2}$} (3,-1) ;

    \clip (3,0) circle (2);
    \begin{scope}
    \fill[blue, opacity=0.4] ({3-sqrt(3)},-1) rectangle (1,1);
    \draw[black, opacity=1.] ({3-sqrt(3)},-1) rectangle (1,1);
    \end{scope}
    \draw[black, line width=0.8, opacity=1.] (3,0) ++(150:2) arc (150:210:2);
    
    \clip (3,0) circle (2);
    \begin{scope}
    \filldraw[blue, opacity=0.4] ({3+sqrt(3)},-1) rectangle (5,1);
    \draw[black, opacity=1.] ({3+sqrt(3)},-1) rectangle (5,1);
    \end{scope}
    \draw[black, line width=0.8, opacity=1.] (3,0) ++(30:2) arc (30:-30:2);
\end{tikzpicture}
\vspace*{-1.5cm}
    \caption{The area of each of the two
    disk segments in blue is
    $\omega^2(\arcsin(\eps/\omega)
    -\eps/\omega\sqrt{1-(\eps/\omega)^2})$.
    A simple computation shows that for $u\in[0,1]$
    we have $\arcsin(u)-u\sqrt{1-u^2}\leq\pi u^3/2$,
    so that the latter area is at most
    $\pi\eps^3/(2\omega)$.
    A rougher estimate consists in applying
    $1-u^2\leq\sqrt{1-u^2}$ to $u=\eps/\omega$,
    which yields the upper bound
    $2\eps^3/\omega$ for the area.}
    \label{fig:disk-segments}
\end{figure}
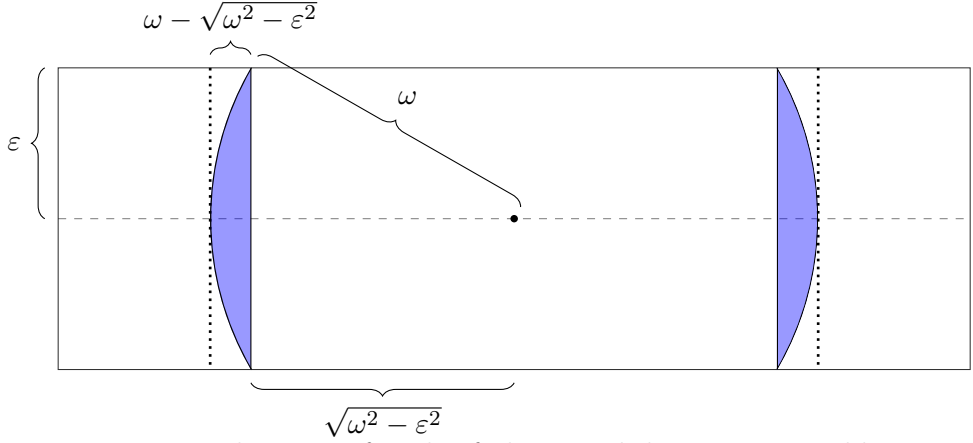


\begin{lemma}
\label{lem:M-eps-uniformly-connected}
If $0<\eps<\eps_0$,
then for all $i \in \Iint{1,N-1}$
we have
\[
|M_{i,i+1}^\eps| \geq \beta
\qquad \mathrm{or} \qquad
\begin{cases}
|M_{i,0}^\eps| \geq \beta, \\
|M_{i+1,0}^\eps| \geq \beta,
\end{cases}
\]
where
$\beta = \frac{\rho_{\min}}{4R}$.
\end{lemma}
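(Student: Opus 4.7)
The plan is to case-split on the \emph{shared height} $h := \psi_i^\eps - (z_i - y_i)^2$ of the common boundary between $\RLag_i^\eps$ and $\RLag_{i+1}^\eps$, which by Remark~\ref{rem:cells-in-increasing-order} lies on the vertical line $\{x = z_i\}$; note that $h$ also equals $\psi_{i+1}^\eps - (z_i - y_{i+1})^2$ by the definition of $z_i$, so it measures how far into both balls $B_i^\eps, B_{i+1}^\eps$ the line $\{x=z_i\}$ penetrates. Two preliminary facts follow from optimality: first, $z_i \in \spt\rho$, because $\spt\rho$ is a closed interval and both $\Lag_i$ and $\Lag_{i+1}$ (lying on opposite sides of $z_i$) must meet $\spt\rho$, otherwise $G_i^\eps$ or $G_{i+1}^\eps$ would vanish; second, $h \geq 0$, since $\RLag_i^\eps$ has positive $\rho^\eps$-measure.

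In \textbf{Case A} ($h \geq \eps^2/4$), the first bound follows directly: the shared segment $\{z_i\}\times[-\min(\sqrt{h},\eps),\min(\sqrt{h},\eps)]$ has $\H^1$-length $\geq \eps$, on which $\rho^\eps = \rho(z_i)/(2\eps)$ with $\rho(z_i) \geq \rho_{\min}$; combined with $y_{i+1}-y_i \leq R$ this gives $|M_{i,i+1}^\eps| \geq \rho_{\min}/(4R) = \beta$.

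In \textbf{Case B} ($h < \eps^2/4$), I would show both $|M_{i,0}^\eps| \geq \beta$ and $|M_{i+1,0}^\eps| \geq \beta$ via a geometric lower bound on the spherical boundary. A sub-step rules out degenerate orientations by showing $y_i < z_i < y_{i+1}$: if, say, $z_i \leq y_i$, then $h < \eps^2$ forces $y_i - z_i > \sqrt{\psi_i^\eps - \eps^2}$, making the strip $\Lag_i \times \{\eps\}$ disjoint from $B_i^\eps$, contradicting the fact that $\RLag_i^\eps$ intersects $\R \times \{\eps\}$ (Lemma~\ref{lem:psi^eps-bounded}). With $y_i < z_i$, the right-side boundary of $\RLag_i^\eps$ on $\partial B_i^\eps$ consists of two symmetric arcs joining $(y_i + \sqrt{\psi_i^\eps - \eps^2}, \pm\eps)$ to $(z_i, \pm\sqrt{h})$, whose combined $\H^1$-length dominates their total vertical span $2(\eps - \sqrt{h}) \geq \eps$. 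Since both $y_i + \sqrt{\psi_i^\eps - \eps^2}$ and $z_i$ lie in the interval $\spt\rho$, the arcs are contained in $\spt\rho \times [-\eps, \eps]$, so $\rho^\eps \geq \rho_{\min}/(2\eps)$ along them; together with the upper bound $\sqrt{\psi_i^\eps} \leq R$ from Lemma~\ref{lem:psi^eps-bounded}, this gives $|M_{i,0}^\eps| \geq \beta$. The same argument applied symmetrically to the left-side arc of $\RLag_{i+1}^\eps \cap \partial B_{i+1}^\eps$ yields $|M_{i+1,0}^\eps| \geq \beta$.

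The main obstacle will be the geometric bookkeeping of Case B: ruling out the degenerate orientations, correctly identifying the circular arcs as the relevant connected boundary pieces on $\partial B_i^\eps$ and $\partial B_{i+1}^\eps$, and verifying both that they remain inside $\spt\rho \times [-\eps, \eps]$ and that their arc length dominates their vertical span $2(\eps - \sqrt{h})$. All three points hinge on the two-sided control $r \leq \sqrt{\psi_i^\eps} \leq R$ and on each cell meeting the horizontal lines $\R\times\{\pm\eps\}$, both supplied by Lemma~\ref{lem:psi^eps-bounded}.
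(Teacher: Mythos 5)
Your argument is correct and is essentially the paper's: the dichotomy between a long flat interface and long circular arcs, driven by the fact that each optimal cell meets both lines $\R\times\{\pm\eps\}$ (Lemma~\ref{lem:psi^eps-bounded}) together with the bounds $\rho^\eps\geq\rho_{\min}/(2\eps)$, $y_{i+1}-y_i\leq R$ and $\sqrt{\psi_i^\eps}\leq R$, is exactly what the paper packages into the inequalities $\H^1(E_i^\eps)+\H^1(R_i^\eps)\geq 2\eps$ and $\H^1(E_i^\eps)+\H^1(L_{i+1}^\eps)\geq 2\eps$ of~\eqref{ineq:sets-L-E-R}, your threshold $h=\eps^2/4$ playing the role of the comparison of $\H^1(E_i^\eps)$ with $\eps$. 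One caveat: your preliminary claim that $h\geq 0$ at optimality is false in general (two optimal cells need not touch, e.g.\ two light, far-apart Dirac masses), but this is harmless, since that situation falls under your Case B with $\sqrt{h}$ replaced by $0$, the spherical arcs then spanning the full height $2\eps$.
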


From this lemma, we construct as follows a graph
$\mathcal{G}^\eps_{\mathrm{u}}$ on
$\{1,\dots,N,\infty\}$
with weights in $\{0,1\}$.
For any two distinct vertices $i\neq j$,
we put a weight equal to one between $i$ and $j$
if and only if $|M_{i,j}^\eps|\geq\beta$.
All the other weights are zero.
Figure~\ref{fig:graph-G^eps_u} provides
an example of a possible graph constructed
in this way.
Lemma~\ref{lem:M-eps-uniformly-connected},
implies that $\mathcal{G}^\eps_{\mathrm{u}}$
is connected.
The proof amounts to showing that
if two consecutive vertices $i$ and $i+1$
are not connected in the graph, then both are
connected to the auxiliary vertex $\infty$,
cf Figure~\ref{fig:sets-L-E-R}.

\begin{figure}
\centering
\centering
\begin{tikzpicture}[auto, 
node distance = 1 cm and 1cm,
on grid, semithick,
state/.style = {circle,
top color = white, draw,
text=black, minimum width =0.2 cm},
every loop/.style={},]

\node[state] (P1) {};
\node[state] (P2) [right= of P1] {};
\node[state] (P3) [right= of P2] {};
\node[state] (P4) [right= of P3] {};
\node[state] (P5) [right =of P4] {};
\node[state] (P6) [right =of P5] {};
\node[state] (P7) [right =of P6] {};
\node[state] (P8) [right =of P7] {};
\node[state] (P9) [right =of P8] {};
\node[state] (Pinf) [above =of P6] {};
\path[-] (P1) edge (P2);
\path[-] (P2) edge (P3);
\path[-] (P3) edge (P4);
\path[-] (P4) edge (Pinf);
\path[-] (P5) edge (Pinf);
\path[-] (P5) edge (P6);
\path[-] (P6) edge (P7);
\path[-] (P7) edge (Pinf);
\path[-] (P8) edge (Pinf);
\path[-] (P8) edge (P9);

\foreach \i in {1,...,9} {
\draw (\i-1,-0.5)
node[anchor=center] {$\i$};
}

\draw (5,1.5)
node[anchor=center] {$\infty$};
\end{tikzpicture}

\caption{Example of a graph
$\mathcal{G}^\eps_{\mathrm{u}}$,
where the connected components of
$\mathcal{G}^\eps_{\mathrm{u}}\setminus\{\infty\}$
are $\{1,2,3,4\}$,
$\{5,6,7\}$ and
$\{8,9\}$.}
\label{fig:graph-G^eps_u}
\end{figure}
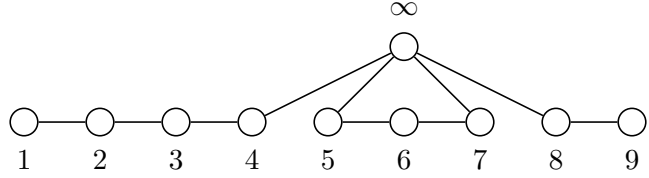

\begin{proof}[Proof of
Lemma~\ref{lem:M-eps-uniformly-connected}]
For $i\in\Iint{1,N}$, define
\begin{equation*}
\begin{cases}
R_i^\eps &=
\RLag_i^\eps\cap\partial B_i^{\eps,+}, \\
L_i^\eps &=
\RLag_i^\eps\cap\partial B_i^{\eps,-},
\end{cases}
\end{equation*}
where
$\partial B_i^{\eps,+} :=
\partial B_i^{\eps}\cap([y_i,+\infty)\times\R)$
and $\partial B_i^{\eps,-} :=
\partial B_i^{\eps}\cap((-\infty,y_i])\times\R)$
are the right-hand and left-hand sides
of the ball $B_i^\eps$.
In addition, let us denote
\begin{align*}
E_i^\eps =
\begin{cases}
\RLag_1^\eps \cap
(\{\min\spt\rho\}\times\R)
\quad&\mathrm{if}\quad i = 0, \\
\RLag_i^\eps \cap
\RLag_{i+1}^\eps
\quad&\mathrm{if}\quad i\in\Iint{1,N-1}, \\
\RLag_N^\eps \cap
(\{\max\spt\rho\}\times\R)
\quad&\mathrm{if}\quad i = N.
\end{cases}
\end{align*}
Since $\eps < r$, Lemma~\ref{lem:psi^eps-bounded}
ensures that the restricted Laguerre cells all
intersects the horizontal lines
$\R\times\{\pm\eps\}$. As a result, for every
$i\in\Iint{1,N-1}$ we have
\begin{equation}
\begin{cases}
\label{ineq:sets-L-E-R}
\H^1(E_i^\eps)
+ \H^1(R_i^\eps)
&\geq 2\eps, \\
\H^1(E_i^\eps)
+ \H^1(L_{i+1}^\eps)
&\geq 2\eps,
\end{cases}
\end{equation}
as illustrated in
Figure~\ref{fig:sets-L-E-R}.
Recall that $\rho^\eps$ is bounded below by
$\frac{\rho_{\min}}{2\eps}$ on its support,
that the components of $\psi^\eps$ are all bounded
above by $R$, and that the distance between
the positions of two consecutive Dirac masses
is at most $R$.
From~\eqref{eq:M^eps-matrix} and from the
definition of $\beta$ we deduce that
\begin{align*}
\begin{cases}
|M_{i,i+1}^\eps|&\geq\beta\H^1(E_i^\eps)/\eps,\\
|M_{i,0}^\eps|&\geq\beta\H^1(R_i^\eps)/\eps,\\
|M_{i+1,0}^\eps|&\geq\beta\H^1(L_{i+1}^\eps)/\eps.
\end{cases}
\end{align*}
These inequalities combined
with~\eqref{ineq:sets-L-E-R} allow
to conclude the proof.
\end{proof}

\begin{figure}
\centering
\begin{tikzpicture}[scale=1.5,
line cap=round,
line join=round,
>=triangle 45]
\draw [line width=.5pt]
(0.2,2)--(3.8,2);
\draw [line width=.5pt]
(0.2,0)--(3.8,0);
\draw [line width=.8pt, dotted]
(2,0)--(2,2);
\draw[dashed,black,opacity=1]
(0.2,1) -- (3.8,1);

\begin{scope}
\begin{pgfinterruptboundingbox}
\clip (0.85,1) circle (1.26);
\filldraw[gray!40, opacity=0.4] (0.2,0)
rectangle (2,2);
\end{pgfinterruptboundingbox}
\end{scope}

\begin{scope}
\begin{pgfinterruptboundingbox}
\clip (3.8,1) circle (1.86);
\filldraw[gray!40, opacity=0.4] (2,0)
rectangle (3.8,2);
\end{pgfinterruptboundingbox}
\end{scope}

\draw[line width=.8pt, red]
(2,1.5)--(2,0.5);

\draw[blue, line width=.8pt] (2,1.5) arc[
start angle=203.5,
delta angle=29,
radius=-1.26];

\draw[blue, line width=.8pt] (2,0.5) arc[
start angle=156.5,
delta angle=-29,
radius=-1.26];

\draw[black!30!green, line width=.8pt] (2,1.5) arc[
start angle=164,
delta angle=-17.5,
radius=1.8];

\draw[black!30!green, line width=.8pt] (2,0.5) arc[
start angle=16,
delta angle=17.5,
radius=-1.8];

\draw (0.7, 1.2) node[right] {
$\mathrm{RLag}_i^\varepsilon$};
\draw (2.6, 1.2) node[right] {
$\mathrm{RLag}_{i+1}^\varepsilon$};
\draw[decorate,decoration={brace,raise=5pt,amplitude=5pt,mirror}]
    (0.2,2) -- node[left=10pt]{$2\varepsilon$} (0.2,0) ;


\draw (3.9, 1.7) node[right] {
$\textcolor{blue}{
R_{i}^\varepsilon}$};
\draw (3.9, 1) node[right] {
$\textcolor{red}{
E_{i}^\varepsilon}$};
\draw (3.9, 0.3) node[right] {
$\textcolor{black!30!green}{
L_{i+1}^\varepsilon}$};

\end{tikzpicture}
    \caption{Illustration for the sets $R_{i}^\eps$,
    $E_i^\eps$ and $L_{i+1}^\eps$,
    respectively in blue, in red, and in green.
    The dotted vertical line corresponds
    to the boundary
    between the two unrestricted Laguerre cells.}
    \label{fig:sets-L-E-R}
\end{figure}
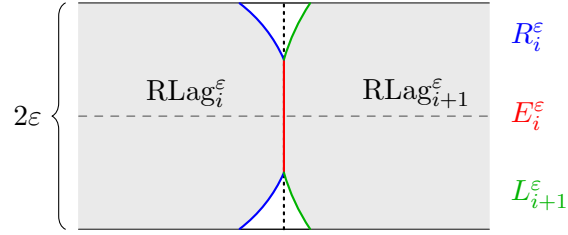

Lemma~\ref{lem:M-eps-uniformly-connected} implies
that for $\eps$ small enough,
$M^\eps$ is uniformly connected.
More precisely, the weights of its
associated graph $\mathcal{G}^\eps$ are bounded
below by the constant $\beta$ times that of the
uniformly weighted graph
$\mathcal{G}^\eps_{\mathrm{u}}$ introduced after
Lemma~\ref{lem:M-eps-uniformly-connected}.
We can thus harness a connectivity inequality
by Fiedler on simple unweighted graphs in order
to get a lower bound for the smallest
non-zero eigenvalue of $M^\eps$. This, in turn,
implies that the matrices $H^\eps$ are
\textit{uniformly} positive definite, thanks to
Lemma~\ref{lem:min-eigenvalue-of-submatrix}.

\begin{remark}
\label{rem:a-priori-G^eps_u-indt-of-eps}
Since the potential $\psi^\eps$ will be
shown to converge to $\psi^*$
(Theorem~\ref{thm:ODE-and-convergence}),
for $\eps$ small enough the graph
$\mathcal{G}^\eps_{\mathrm{u}}$ is in fact
independent of $\eps$. Indeed, two consecutive
vertices $i$ and $i+1$ will be connected if
and only if the corresponding optimal cells
for the unregularized uni-dimensional problem
intersect, and otherwise both will be connected
to $\infty$. Note however that the proof we
provide for Theorem~\ref{thm:ODE-and-convergence}
does not rely on this property.
\end{remark}

\subsubsection*{A graph theory inequality by Fiedler}
The following result can be found
in~\cite[Paragraph~4.3]{fiedler1973algebraic}.
Let $\mathcal{G}$ be graph with $n$ vertices,
and let $e(\mathcal{G})$ denote its
\textit{edge connectivity}, i.e.~the minimum
number of edges whose deletion disconnects the graph.
Then the second smallest
eigenvalue $\lambda_1(\mathcal{G})$
of its Laplacian matrix satisfies
\begin{equation}
\label{eq:Fiedler}
\lambda_1(\mathcal{G}) \geq
2 e(\mathcal{G}) \left(
1 - \cos\left(\pi n^{-1}
\right)\right).
\end{equation}


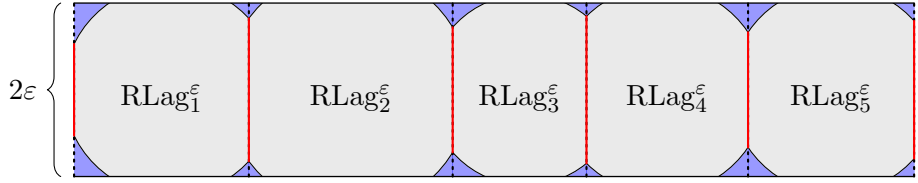
\begin{figure}
\centering
\begin{tikzpicture}[scale=1.15,
line cap=round,
line join=round,
>=triangle 45]

\tikzmath{
\r1 = 1.22; \a1 = 1.1; \b1 = 0.9;
\r2 = 1.4; \a2 = (\r2^2-\r1^2+\b1^2)^0.5; \b2 = 1.2;
\r3 = 1.1; \a3 = (\r3^2-\r2^2+\b2^2)^0.5; \b3 = 0.7;
\r4 = 1.2; \a4 = (\r4^2-\r3^2+\b3^2)^0.5; \b4 = 1;
\r5 = 1.2; \a5 = (\r5^2-\r4^2+\b4^2)^0.5; \b5 = 0.9;}

\def\elts{
{\r1,\a1,\b1},
{\r2,\a2,\b2},
{\r3,\a3,\b3},
{\r4,\a4,\b4},
{\r5,\a5,\b5}
}

\draw[decorate,decoration={brace,raise=5pt,amplitude=5pt,mirror}]
    (0,2) -- node[left=10pt]{$2\varepsilon$} (0,0) ;

\pgfmathsetmacro{\cumul}{0};
\pgfmathsetmacro{\index}{1};
\foreach \e in \elts {
    \begin{scope}[shift={(\cumul,0)}]
        
    \begin{scope}
    \clip (0,0) rectangle ({\e}[1]+{\e}[2],2);
    \filldraw[blue, opacity=0.4] (0,0)
    rectangle ({\e}[1]+{\e}[2],2);
    \end{scope}
    
    \begin{scope}
    \clip (0,0) rectangle ({\e}[1]+{\e}[2],2);
    \filldraw[white] ({\e}[1],1) circle ({\e}[0]);
    \filldraw[gray!40, opacity=0.4] ({\e}[1],1) circle ({\e}[0]);
    \end{scope}
    
    \begin{scope}
    \clip (0,0) rectangle ({\e}[1]+{\e}[2],2);
    \draw ({\e}[1],1) circle ({\e}[0]);
    \end{scope}
    
    \draw [line width=.5pt] (0,2)--({\e}[1]+{\e}[2],2);
    \draw [line width=.5pt] (0,0)--({\e}[1]+{\e}[2],0);
    \draw [line width=.8pt, dotted] (0,0)--(0,2);
    \draw [line width=.8pt, red]
    (0,{1-({\e}[0]^2-{\e}[1]^2)^0.5}) --
    (0,{1+({\e}[0]^2-{\e}[1]^2)^0.5});
    \draw ({\e}[1]/2+{\e}[2]/2, 0.9)
    node[anchor=center] {
    $\mathrm{RLag}_{\index}^\varepsilon$};

    \end{scope}
    \pgfmathsetmacro{\ff}{\cumul+{\e}[1]+{\e}[2]};
    \global\let\cumul=\ff
    \pgfmathsetmacro{\ff}{int(\index+1)};
    \global\let\index=\ff
}

\begin{scope}[shift={(\cumul,0)}]
\draw [line width=.8pt, dotted] (0,0)--(0,2);
\draw [line width=.8pt, red]
(0,{1-(\r5^2-\b5^2)^0.5}) --
(0,{1+(\r5^2-\b5^2)^0.5});
\end{scope}
\end{tikzpicture}
    \caption{The restricted Laguerre
    cells represented
    on the support of $\rho^\eps$. The
    vertical segments in red correspond to the sets
    $E_i^\eps$, while the blue area is the
    relative complement of the union of the Laguerre
    cells in $\spt\rho^\eps$.}
    \label{fig:RLag_infty-2D}
\end{figure}

\begin{figure}
\centering
\begin{subfigure}{.45\textwidth}
    \centering
    \begin{tikzpicture}[scale=1.5,
line cap=round,
line join=round,
>=triangle 45]

\begin{scope}
\clip (0.95,0) rectangle (2.95,2);
\filldraw[blue, opacity=0.4] (0.95,0)
rectangle (2.95,2);
\end{scope}

\begin{scope}
\clip (0.95,0) rectangle (2.95,2);
\filldraw[white] (2,1) circle (1.22);
\filldraw[gray!40, opacity=0.4] (2,1) circle (1.22);
\end{scope}

\begin{scope}
\clip (0.95,0) rectangle (2.95,2);
\draw (2,1) circle (1.22);
\end{scope}

\draw [line width=.8pt, dotted]
(0.95,0)--(0.95,2);
\draw [line width=.8pt, dotted]
(2.95,0)--(2.95,2);
\draw [line width=.5pt] (0.2,2)--(3.8,2);
\draw [line width=.5pt] (0.2,0)--(3.8,0);

\draw (1.65, 1.5) node[right] {
$\mathrm{RLag}_i^\varepsilon$
};
\draw[decorate,decoration={brace,raise=5pt,amplitude=5pt,mirror}]
    (0.2,2) -- node[left=10pt]{$2\varepsilon$} (0.2,0) ;
\draw[decorate,decoration={brace,raise=3pt,amplitude=5pt,mirror,
aspect=0.25}]
    (0.95,1.62) -- node[left=10pt,pos=0.25]{$\geq\varepsilon$}
    (0.95,0.38) ;
\draw[decorate,decoration={brace,raise=3pt,amplitude=5pt,
aspect=0.25}]
    (2.95,1.76) -- node[right=10pt,pos=0.25]{$\geq\varepsilon$}
    (2.95,0.24) ;
\fill (2,1) circle[radius=1pt];
\draw[decorate,
decoration={brace,raise=0pt,
amplitude=0pt,mirror}]
    ({2-1.22*sqrt(1/2)},{1-1.22*sqrt(1/2)})
    -- node[right=10pt]{} (2,1) ;
\draw (1.6, 0.4) node[right] {
$\text{radius}\geq\varepsilon$};
\draw [line width=.8pt, dashed,
opacity=0.5] (0.2,1)--(3.8,1);

\end{tikzpicture}    
    \caption{A cell contained in the corresponding
    rectangle of the partition.}
    \label{fig:area-of-Lag-minus-RLag}
\end{subfigure}
\hfill
\begin{subfigure}{.45\textwidth}
    \centering
    \begin{tikzpicture}[scale=1.5,
line cap=round,
line join=round,
>=triangle 45]
\begin{scope}
\clip ({2-sqrt(3)/2},0) rectangle
({2+sqrt(3)/2},2);
\filldraw[blue, opacity=0.4] (1,0)
rectangle (3,2);
\end{scope}

\begin{scope}
\clip ({2-sqrt(3)/2},0) rectangle
({2+sqrt(3)/2},2);
\filldraw[white] (2,1) circle (1);
\filldraw[gray!40, opacity=0.4]
(2,1) circle (1);
\end{scope}

\begin{scope}
\clip ({2-sqrt(3)/2},0) rectangle ({2+sqrt(3)/2},2);
\draw (2,1) circle (1);
\end{scope}

\draw [line width=.8pt, dotted]
({2-sqrt(3)/2},0)--({2-sqrt(3)/2},2);
\draw [line width=.8pt, dotted]
({2+sqrt(3)/2},0)--({2+sqrt(3)/2},2);
\draw [line width=.5pt] (0.2,2)--(3.8,2);
\draw [line width=.5pt] (0.2,0)--(3.8,0);

\draw[decorate,
decoration={brace,raise=5pt,
amplitude=5pt,mirror}]
    (0.2,2) -- node[left=10pt]{
    $2\varepsilon$} (0.2,0) ;
\draw[decorate,
decoration={brace,raise=3pt,
amplitude=5pt,
aspect=0.25,mirror}]
    ({2-sqrt(3)/2},1.5) -- node[
    left=10pt,pos=0.25]{
    $\varepsilon$}
    ({2-sqrt(3)/2},0.5) ;
\draw[decorate,
decoration={brace,raise=3pt,
amplitude=5pt,
aspect=0.25}]
    ({2+sqrt(3)/2},1.5) -- node[
    right=10pt,pos=0.25]{
    $\varepsilon$}
    ({2+sqrt(3)/2},0.5) ;
\fill (2,1) circle[radius=1pt];
\draw[decorate,
decoration={brace,raise=0pt,
amplitude=0pt,mirror}]
    ({2-sqrt(1/2)},{1-sqrt(1/2)})
    -- node[right=10pt]{} (2,1) ;
\draw (1.5, 0.4) node[right] {
$\text{radius}=\varepsilon$};
\draw [line width=.8pt, dashed,
opacity=0.5] (0.2,1)--(3.8,1);
\draw[decorate,
decoration={brace,raise=3pt,
amplitude=5pt}]
    ({2-sqrt(3)/2},1.5) -- node[
    left=10pt]{
    $\frac{\varepsilon}{2}$}
    ({2-sqrt(3)/2},2) ;

\draw[decorate,
decoration={brace,raise=3pt,
amplitude=5pt}]
    ({2-sqrt(3)/2},2)
    -- node[above=10pt]{
    $\frac{3\varepsilon}{\sqrt{2}}$} (2,2) ;
\end{tikzpicture}    
    \caption{The blue area is maximal when the disk
    is tangent to the support of $\rho^\eps$.}
    \label{fig:max-area-of-Lag-minus-RLag}
\end{subfigure}
\caption{The blue area in the left
drawing is at most equal to the blue area
in the right drawing.}
\label{fig:are-of-rel-complement-of-RLag}
\end{figure}
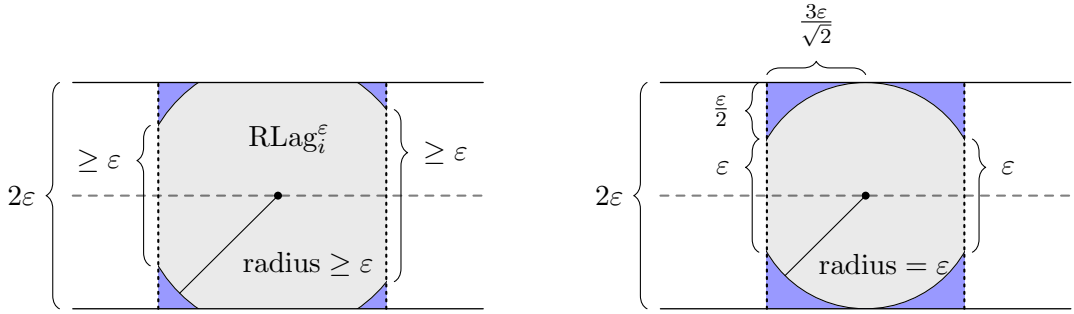

\begin{proposition}
\label{prop:lower-bound-for-H^eps}
For $0<\eps<\eps_0$ we have
\begin{align} 
\label{ineq:lower-bound-for-H^eps}
\lambda_{\min}(H^\eps)
\geq \frac{4\beta}{N+1}
\sin^2\left(\frac{\pi}{2N+2}\right)
\end{align}
where we recall that
\begin{equation*}
4\beta = \rho_{\min}\left(1 + \diam\left(
\spt\rho\cup\{y_1,\dots,y_N\}\right)^2
\right)^{-\frac{1}{2}}.
\end{equation*}
\end{proposition}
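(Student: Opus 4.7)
The plan is to assemble the building blocks already constructed in this section: the Laplacian dominance lemma, the submatrix estimate, Fiedler's eigenvalue inequality, and the uniform connectivity provided by Lemma~\ref{lem:M-eps-uniformly-connected}. The strategy is to first bound $\lambda_1(M^\eps)$ from below by $\beta$ times the second eigenvalue of the auxiliary $\{0,1\}$-weighted graph $\mathcal{G}^\eps_{\mathrm{u}}$, then apply Fiedler's inequality to $\mathcal{G}^\eps_{\mathrm{u}}$, and finally pass from $M^\eps$ to its principal submatrix $H^\eps$ via Lemma~\ref{lem:min-eigenvalue-of-submatrix}.

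First, I would let $L^\eps_{\mathrm{u}}$ denote the Laplacian matrix of $\mathcal{G}^\eps_{\mathrm{u}}$, whose off-diagonal entries are $-1$ where an edge is present and $0$ otherwise. By the very definition of $\mathcal{G}^\eps_{\mathrm{u}}$, whenever $[L^\eps_{\mathrm{u}}]_{i,j} \neq 0$ with $i \neq j$ one has $|M^\eps_{i,j}| \geq \beta = \beta \cdot |[L^\eps_{\mathrm{u}}]_{i,j}|$, so $M^\eps$ dominates $\beta L^\eps_{\mathrm{u}}$ in the sense of Lemma~\ref{lem:Laplacian-matrix-dominance}. Both are Laplacian matrices on $N+1$ vertices, so that lemma yields $\lambda_1(M^\eps) \geq \beta \, \lambda_1(\mathcal{G}^\eps_{\mathrm{u}})$.

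Next, Lemma~\ref{lem:M-eps-uniformly-connected} precisely ensures that for every $i \in \Iint{1,N-1}$ either the edge $\{i,i+1\}$ is present in $\mathcal{G}^\eps_{\mathrm{u}}$ or both edges $\{i,\infty\}$ and $\{i+1,\infty\}$ are; in particular $\mathcal{G}^\eps_{\mathrm{u}}$ is connected, so its edge connectivity satisfies $e(\mathcal{G}^\eps_{\mathrm{u}}) \geq 1$. Applying Fiedler's inequality~\eqref{eq:Fiedler} with $n = N+1$ and the trigonometric identity $1-\cos(x) = 2\sin^2(x/2)$ gives
\begin{equation*}
\lambda_1(\mathcal{G}^\eps_{\mathrm{u}})
\geq 2 \cdot 1 \cdot \left(1 - \cos\!\left(\frac{\pi}{N+1}\right)\right)
= 4 \sin^2\!\left(\frac{\pi}{2N+2}\right).
\end{equation*}

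Finally, since $H^\eps$ is obtained from $M^\eps$ by removing the row and column of index $0$, Lemma~\ref{lem:min-eigenvalue-of-submatrix} yields $\lambda_{\min}(H^\eps) \geq \lambda_1(M^\eps)/(N+1)$. Chaining the three inequalities gives exactly
\begin{equation*}
\lambda_{\min}(H^\eps) \geq \frac{\beta \, \lambda_1(\mathcal{G}^\eps_{\mathrm{u}})}{N+1} \geq \frac{4\beta}{N+1} \sin^2\!\left(\frac{\pi}{2N+2}\right),
\end{equation*}
and the formula for $4\beta$ in terms of $\rho_{\min}$ and $R$ follows from the definitions $\beta = \rho_{\min}/(4R)$ and $R = (1+\diam(\spt\rho \cup \{y_1,\dots,y_N\})^2)^{1/2}$. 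The main conceptual obstacle was already dispatched in Lemma~\ref{lem:M-eps-uniformly-connected}, namely producing the $\eps$-independent connectivity of the auxiliary graph; once that is in hand, the proof is essentially an assembly of general spectral-graph results.
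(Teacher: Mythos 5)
Your proposal is correct and follows essentially the same route as the paper's proof: dominate $M^\eps$ by $\beta$ times the Laplacian of the uniform graph $\mathcal{G}^\eps_{\mathrm{u}}$ via Lemma~\ref{lem:Laplacian-matrix-dominance}, apply Fiedler's inequality~\eqref{eq:Fiedler} with $n=N+1$ and edge connectivity at least one, and pass to the principal submatrix $H^\eps$ with Lemma~\ref{lem:min-eigenvalue-of-submatrix}. The only difference is cosmetic: you make the dominated Laplacian $\beta L^\eps_{\mathrm{u}}$ explicit, which the paper leaves implicit.
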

\begin{proof}
By Lemma~\ref{lem:M-eps-uniformly-connected}
and Lemma~\ref{lem:Laplacian-matrix-dominance}
we have $\lambda_1(M^\eps) \geq
\beta\lambda_1(\mathcal{G}^\eps_{\mathrm{u}})$.
Since $\mathcal{G}^\eps_{\mathrm{u}}$ is
connected, its edge connectivity is at least $1$,
so Fiedler's connectivity inequality combined with the
identity $1-\cos(2\theta) = 2\sin^2(\theta)$ yields
$\lambda_1(\mathcal{G}^\eps_{\mathrm{u}})\geq
4\sin^2\left(\frac{\pi}{2N+2}\right)$.
Lemma~\ref{lem:min-eigenvalue-of-submatrix}
allows to conclude the proof.
\end{proof}

We now express the derivative of $G_\eps$
with respect to $\eps$, and give
an estimate for its value at the optimal
potential $\psi^\eps$.
The main theorem will then follow
from the implicit function theorem, combined with
the uniform bound~\ref{ineq:lower-bound-for-H^eps}
and the integration of $\eps\mapsto\dot\psi^\eps$
from $0$ to $\eps$.

\begin{proposition}
\label{prop:mixed-derivative-of-G^eps}
Suppose that $0<\eps<\eps_0$.
Then, for every $i\in\Iint{1,N}$,
the partial derivative of $G_i^\eps$
with respect to $\eps$ writes
\begin{align}
\label{eq:mixed-derivative-of-G^eps}
(\partial_\eps G^\eps_i)(\psi)
&=  -\frac{1}{\eps}
\int_{\RLag_i(\psi)} \rho^\eps(x)dx
+ \int_{\RLag_i(\psi)
\cap (\R \times \{-\eps,\eps\})}
\rho^\eps(x)d\H^1(x).
\end{align}
In particular, the function
$(\eps,\psi)\mapsto (\partial_\eps G^\eps)(\psi)$
is jointly continuous on the open set $\Omega$
of pairs $(\eps,\psi)\in(0,\eps_0)\times\R^N$
such that all components of $G^\eps(\psi)$ are non-zero,
as well as $1-\sum_{i=1}^NG^\eps_i(\psi)$.
Moreover, for every $0 < \eps < \eps_0$
we have
\begin{equation}
\label{ineq:bound-for-q^eps}
\left\|q^\eps\right\|\leq
\frac{4\rho_{\max}^2}{\alpha_{\min}}
N^{\frac{1}{2}} \eps.
\end{equation}
\end{proposition}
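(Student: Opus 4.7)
The plan is to establish the formula by direct differentiation of the two-dimensional representation $G_i^\eps(\psi)=\rho^\eps(\RLag_i(\psi))$, read off the joint continuity from the resulting expression, and then switch to the one-dimensional viewpoint in order to obtain the quantitative bound on $q^\eps$. Since only the density $\rho^\eps(x)=(2\eps)^{-1}\rho(x^1)\mathbbm{1}_{|x^2|<\eps}$ depends on $\eps$ (and not the geometric cell $\RLag_i(\psi)$), I would write
\[
G_i^\eps(\psi)=\frac{1}{2\eps}\int_{\RLag_i(\psi)\cap\{|x^2|<\eps\}}\rho(x^1)\,dx^1\,dx^2,
\]
and differentiate in $\eps$ by the product/Leibniz rule. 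The derivative of the prefactor contributes $-\eps^{-1}G_i^\eps(\psi)=-\eps^{-1}\int_{\RLag_i(\psi)}\rho^\eps(x)\,dx$, while differentiating the $\eps$-dependent domain via the coarea formula applied to $x\mapsto x^2$ contributes the two boundary layers $\R\times\{\pm\eps\}$, yielding $\int_{\RLag_i(\psi)\cap(\R\times\{-\eps,\eps\})}\rho^\eps(x)\,d\H^1(x)$ (with $\rho^\eps$ taken by continuous extension to $|x^2|=\eps$). Summing gives~\eqref{eq:mixed-derivative-of-G^eps}.

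For the joint continuity of $(\eps,\psi)\mapsto(\partial_\eps G^\eps)(\psi)$ on the open set $\Omega$, both summands are continuous in $(\eps,\psi)$ away from $\eps=0$: the first is $-\eps^{-1}G_i^\eps(\psi)$, continuous since $G_i^\eps$ is (dominated convergence); the second is the integral of $(2\eps)^{-1}\rho(x^1)$ over the interval $\RLag_i(\psi)\cap(\R\times\{\pm\eps\})$, whose endpoints are the intersections of the interval $\Lag_i(\psi)$ with the chord of the disk $B_i(\psi)$ at height $\pm\eps$, both depending continuously on $(\eps,\psi)$. The non-degeneracy built into $\Omega$ ensures that none of these intervals collapse or exit the support of $\rho$, so the integrals depend continuously on the parameters.

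The quantitative bound~\eqref{ineq:bound-for-q^eps} is the main obstacle, and for it I would abandon the 2D formula in favor of the 1D one, $G_i^\eps(\psi)=\int_{\Lag_i(\psi)}(f_\eps^*)'(\psi_i-(x-y_i)^2)\,d\rho(x)$, and differentiate the integrand directly. From~\eqref{eq:derivatives-of-f*} one computes $\partial_\eps[(f_\eps^*)'(t)]=-\eps^{-2}\sqrt{t}$ on $0<t<\eps^2$ and $0$ elsewhere, hence the integrand in $\partial_\eps G_i^\eps(\psi^\eps)$ is supported on the thin set $A_i^\eps:=\Lag_i(\psi^\eps)\cap\{\sqrt{\psi_i^\eps-\eps^2}<|x-y_i|\leq\sqrt{\psi_i^\eps}\}$. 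Pointwise one has $|\partial_\eps[(f_\eps^*)'(\psi_i^\eps-(x-y_i)^2)]|\leq 1/\eps$ on $A_i^\eps$, while its Lebesgue measure is bounded by $2(\sqrt{\psi_i^\eps}-\sqrt{\psi_i^\eps-\eps^2})\leq 2\eps^2/\sqrt{\psi_i^\eps}$. Invoking Lemma~\ref{lem:psi^eps-bounded} to get $\sqrt{\psi_i^\eps}\geq r=\alpha_{\min}/(2\rho_{\max})$ then gives the componentwise estimate $|q_i^\eps|\leq 4\rho_{\max}^2\eps/\alpha_{\min}$, and taking the Euclidean norm yields the claimed $\sqrt{N}$ factor. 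The conceptual subtlety this circumvents is that in the 2D formula the two summands are each of size $O(1/\eps)$ and only their difference is $O(\eps)$; the 1D representation exhibits $q_i^\eps$ directly as an integral over an annular region of width $O(\eps^2)$, making the cancellation transparent.
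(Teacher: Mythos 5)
Your argument is correct, and the overall value of the constant you obtain matches the paper's. The derivation of~\eqref{eq:mixed-derivative-of-G^eps} (Fubini in the vertical variable plus Leibniz: the $-\eps^{-1}$ term from the prefactor, the boundary term from the moving limits $\pm\eps$) and the continuity statement follow the paper's route. Where you diverge is the bound~\eqref{ineq:bound-for-q^eps}: the paper stays in the two-dimensional picture, rewrites the boundary term as $\eps^{-1}\rho^\eps(\RLag_i^\eps\cap H_i^\eps)$ with $H_i^\eps$ the vertical strip of half-width $\sqrt{\psi_i^\eps-\eps^2}$, so that $q_i^\eps=\eps^{-1}\rho^\eps(\RLag_i^\eps\setminus H_i^\eps)$, and then estimates the area of the two disk segments of base $2\eps$ and height at most $\eps^2/\sqrt{\psi_i^\eps}$. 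You instead differentiate the one-dimensional representation $G_i^\eps(\psi)=\int_{\Lag_i(\psi)}(f_\eps^*)'(\psi_i-|x-y_i|^2)\,d\rho$ under the integral sign, using $\partial_\eps[(f_\eps^*)'(t)]=-\eps^{-2}\sqrt{t}\,\mathbbm 1_{0<t<\eps^2}$, and bound the integral over the annular set of width at most $2\eps^2/\sqrt{\psi_i^\eps}$ with pointwise bound $1/\eps$, invoking Lemma~\ref{lem:psi^eps-bounded} exactly as the paper does. After Fubini these are literally the same quantity (the disk-segment area integral in disguise), so this is a coordinate-change of the paper's estimate rather than a new idea; its advantage is that the $O(\eps)$ size of $q_i^\eps$, i.e.~the cancellation between the two $O(1/\eps)$ terms in~\eqref{eq:mixed-derivative-of-G^eps}, is visible directly from the support of $\partial_\eps(f_\eps^*)'$ without any geometric picture, while the paper's version reuses the disk-segment geometry already set up for Lemma~\ref{lem:psi^eps-bounded}. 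Note also that your differentiation under the integral sign should be justified by dominated convergence of difference quotients (the integrand is locally Lipschitz in $\eps$ with constant $\lesssim 1/\eps$, and the exceptional set $\{\psi_i-|x-y_i|^2=\eps^2\}$ is Lebesgue-null), which is routine.

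One small inaccuracy in your continuity step: membership in $\Omega$ does \emph{not} prevent the chords at height $\pm\eps$ from collapsing, since a cell can have positive regularized mass while $\psi_i<\eps^2$ (disk of radius smaller than $\eps$), in which case $\RLag_i(\psi)\cap(\R\times\{-\eps,\eps\})$ is empty. This does not endanger the conclusion --- the boundary integral is bounded by $\rho_{\max}(2\eps)^{-1}$ times the chord length and therefore tends continuously to zero as the chord degenerates, and the paper simply invokes dominated convergence on the Fubini representation --- but the justification ``the non-degeneracy built into $\Omega$ ensures that none of these intervals collapse'' should be removed or replaced by this observation.
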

\begin{proof}
The formula for $(\partial_\eps G^\eps_i)(\psi)$
follows from Fubini's theorem, which allows us
to write
\begin{align*}
G_i^\eps(\psi)
&= \int_{\RLag_i(\psi) \cap
(\R\times[-\eps,\eps])} d\rho^\eps \\
&= \frac{1}{2\eps}
\int_{-\eps}^\eps \left(
\int_{\RLag_i(\psi)
\cap (\R \times\{w\})}
\rho(x)d\H^1(x) \right)dw.
\end{align*}
The joint continuity of
$\partial_\eps G^\eps$ in $(\eps,\psi)$
then immediately follows from the
dominated convergence theorem.
Let us now bound
$q^\eps = -(\partial_\eps G^\eps_i)(\psi^\eps)$.
We take $\eps < \eps_0$, so that every cell
contains a point of ordinate $\eps$.
Leveraging the definition of $\rho^\eps$, we can
write the second term in the right-hand side
of~\eqref{eq:mixed-derivative-of-G^eps}
as $1/\eps$ times the $\rho^\eps$-measure
of the rectangle which is the convex hull of
$\RLag_i^\eps
\cap \partial(\R \times [-\eps,\eps])$.
More precisely,
\begin{align*}
q^\eps_i
&= \frac{1}{\eps} \left[\rho^\eps
(\RLag_i^\eps)
- \rho^\eps(\RLag_i^\eps\cap H_i^\eps)
\right] \\
&= \frac{1}{\eps}\rho^\eps
(\RLag_i^\eps\setminus H_i^\eps),
\end{align*}
where $H_i^\eps$
is the vertical strip of width
$2\sqrt{\psi_i^\eps-\eps^2}$
centered at abscissa $y_i$.
The set $\RLag_i^\eps \setminus
H_i^\eps$ is contained
in the union of two disjoint disk segments
of base $2\eps$ and height $\sqrt{\psi_i^\eps}
-\sqrt{\psi_i^\eps-\eps^2}$, as illustrated in
Figure~\ref{fig:disk-segments}
when choosing $\omega=\sqrt{\psi_i^\eps}$.
Since the height of the disk segments is bounded above
by $\eps^2/\sqrt{\psi_i^\eps}$,
their respective area is at most
$2\eps^3/\sqrt{\psi_i^\eps}$.
The density $\rho^\eps$ is moreover bounded by
$\rho_{\max}/2\eps$, and by
Lemma~\ref{lem:psi^eps-bounded} we have
$\sqrt{\psi_i^\eps}\geq r
=\alpha_{\min}/(2\rho_{\max})$,
so in the end
\begin{equation*}
\left|q^\eps_i\right|
\leq 2\times\frac{\rho_{\max}}{2\eps}\times
\frac{2\eps^3}{\sqrt{\psi_i^\eps}}
= \frac{4\rho_{\max}^2}{\alpha_{\min}}\eps,
\end{equation*}
and the bound on $\|q^\eps\|$ follows.
\end{proof}

\begin{proof}[Proof of
Theorem~\ref{thm:ODE-and-convergence}]
We apply the implicit function theorem
to the function $G : \Omega \to \R^N$ defined by
$G(\eps,\psi)=G^\eps(\psi)$,
where $\Omega$ is the open set defined in
Proposition~\ref{prop:mixed-derivative-of-G^eps}.
Thanks to this proposition and
Theorem~\ref{thm:regularity-multiD},
$G$ has partial first derivatives in both
$\eps$ and $\psi$, and these first derivatives
are jointly continuous on $\Omega$,
so $G$ is $\Cc^1$ on this set.
Note that continuity of
the first derivative in $\psi$ is a direct
consequence of the dominated convergence and
of the fact that the derivative in question
may be written
\begin{equation*}
\partial_{\psi_{i+1}}G_i(\eps,\psi) =
(4(y_{i+1}-y_i)\eps)^{-1}\int_{\R}\rho(z_i(\psi))
\mathbbm 1_{[-w_i(\psi),w_i(\psi)]}(x^2)
\mathbbm 1_{[-\eps,\eps]}(x^2)dx^2,
\end{equation*}
where
$w_i(\psi) = \sqrt{\psi_i-(z_i(\psi)-y_i)^2}$
if the radicand is non-negative,
$w_i(\psi)=0$ otherwise.
Since the Jacobian matrix
$D G^\eps(\psi^\eps)$
is invertible for every $\eps\in(0,\eps_0)$,
the implicit function theorem implies that
$\eps \mapsto \psi^\eps$ is of class
$\Cc^1$ on this interval, with derivative
given by~\eqref{eq:ODE}.

We now prove the convergence of
$\psi^\eps$ to $\psi^*$
as $\eps\to0$.
Let $(\eps_n)$ be a sequence
of strictly positive reals
converging to zero.
By Lemma~\ref{lem:psi^eps-bounded},
the $\psi^{\eps_n}$
are uniformly bounded
so there exists a converging subsequence.
Since the functionals
$-\mathcal{K}^\eps$
$\Gamma$-converge to $-\mathcal{K}$,
the limit of the subsequence
is necessarily $\psi^*$,
the unique maximizer of $\mathcal{K}$.
This holds for any
convergent subsequence of
$(\psi^{\eps_n})$,
so the whole sequence
converges to $\psi^*$.

Now that the convergence of
$\psi^\eps$ to $\psi^*$
is established, let us bound
$\|\psi^\eps-\psi^*\|$.
For $\eps\in(0,\eps_0)$,
equation~\eqref{eq:ODE} and the
inequalities~\eqref{ineq:lower-bound-for-H^eps}
and~\eqref{ineq:bound-for-q^eps} imply that
\begin{align*}
\|\dot\psi^\eps\| &\leq
\frac{1}{\lambda_{\min}(H^\eps)}
\left\|q^\eps\right\| \\
&\leq \frac{N+1}{4\beta
\sin^2\left(\frac{\pi}{2N+2}\right)} \times
\frac{2\rho_{\max}}{r}N^{\frac{1}{2}}\eps.
\end{align*}
As a result,
\begin{align*}
\|\psi^\eps-\psi^*\|
&\leq \int_0^\eps \|\dot\psi^\tau\| d\tau
\leq 2C\int_0^\eps \tau d\tau
= C \eps^2
\end{align*}
where, using the definitions of $\beta$ and $r$,
\begin{equation*}
C = \frac{(N+1)\rho_{\max}N^{\frac{1}{2}}}{
8\beta\sin^2\left(\frac{\pi}{2N+2}\right)r}
= \frac{\rho_{\max}R(N+1)N^{\frac{1}{2}}}{
2\sin^2\left(\frac{\pi}{2N+2}\right)\rho_{\min}r}
= \frac{\rho_{\max}^2R(N+1)N^{\frac{1}{2}}}{
\alpha_{\min} \rho_{\min}
\sin^2\left(\frac{\pi}{2N+2}\right)},
\end{equation*}
and we recall that
$R = \displaystyle\left(1 + \diam\left(
\spt\rho\cup\{y_1,\dots,y_N\}\right)^2
\right)^{\frac{1}{2}}$.
\end{proof}

\section{Numerics}

The aim of this section is to illustrate
the algorithm's behavior, particularly in terms of
accuracy and execution speed.
The solver is based on Newton's algorithm
applied to the regularized dual problem.
To avoid leaving the domain on which
the functional is $\Cc^2$,
backtracking has been implemented:
if the potentials proposed at a given step
lead to empty cells, we go back to the
previous step and we apply a relaxation coefficient
twice smaller. Each step of the algorithm
writes
\begin{equation}
\label{eq:Newton's-Algorithm}
\psi^{(k+1)} = \psi^{(k)} + \eta^{(k)}d^{(k)},
\end{equation}
where $d^{(k)} = -DG^\eps(\psi^{(k)})[
G^\eps(\psi^{(k)})-\alpha]$
is the direction of descent and
$\eta^{(k)}$ is the step size given by the
backtracking line search.

\subsection{Implementation details}
The implementation naturally exploits the
properties of the one-dimensional case.
First, the restricted Laguerre cells in 1D are
straightforward to compute once the
positions of the Dirac masses are sorted,
see equation~\eqref{eq:extremities-of-RLag-1D} in
Appendix~\ref{sec:appendix-uniqueness-SDOPT-1D}.
Second, the matrix from Newton's system
is both symmetrical and tridiagonal.
It is therefore possible to store
the complete system using three vectors
(two for the matrix, and one for the
second member).
However, in practice, to increase efficiency
the solution of the system is calculated
at the same time as the cells are constructed.
This allows to store only one
intermediate vector.
As in classical semi-discrete optimal transport,
the pure Newton's algorithm (i.e.~with
constant step size $\eta^{(k)} \equiv 1$)
may not converge. We therefore use
a backtracking strategy, similar to that 
of~\cite{kitagawa2019convergence},
to ensure that the iterates do not reach
an invalid state (empty cells, negative
potentials, etc.).
The library developed for this publication
is available
at~\url{https://github.com/sdot-team/usdot}.

\subsection{Regularization error}

The impact of regularization on the computed
$\psi$ is illustrated in
Figure~\ref{fig:regularization-error}.
Although this example corresponds to
a specific numerical setting, similar behavior
was consistently observed across all our
experiments: the error scales with the square
of $\eps$ and can easily become negligibly small.
In particular, these simulations
illustrate the tight convergence rate provided
by Theorem~\ref{thm:ODE-and-convergence}.

\begin{figure}
    \centering
    \scalebox{0.7}{
        \input{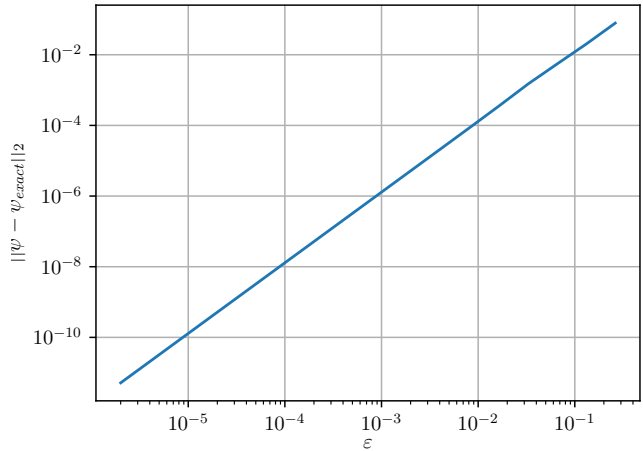}
    }
 
    \caption{Impact of the regularization
    parameter $\eps$ on the potentials $\psi$.
    For this example, the density function was
    a Gaussian centered around 0, with $\sigma=1$,
    the discrete measure is supported on $15$ Dirac
    masses which were positioned randomly in
    $[-1,+1]$ with a uniform distribution, 
    and the ratio of the total mass of the
    Dirac masses to
    that of the Gaussian was set at $1/2$.
    As can be seen, the error on the potentials
    is globally proportional to the square of
    the $\eps$ parameter, in accordance with
    the theorem~\ref{thm:ODE-and-convergence}.} 

    \label{fig:regularization-error}
\end{figure}

\subsection{Shape registration example}

Bonneel et al.~\cite{bonneel2015sliced}
introduced a shape registration algorithm based
on a 1D discrete-discrete optimal transport
algorithm, known as the Fast Iterative Sliced
Transport (FIST) algorithm.
This method leverages
1D point correspondences to generate
unidimensional displacement proposals,
which are then used as combinations to
propose movements in the target space.
Although these proposed displacements may
not perfectly match those obtained through
direct optimal transport in the target space,
the approximation can still effectively
capture global motions, such as rigid body
transformations.

This algorithm has been adapted for the
semi-discrete setting, the main difference
being the representation of the target shape.
In our case, the projections can be defined
as almost any kind of functions, for instance
sum of polynomials, Gaussians, etc.
It is still possible to start from a
set of points, for instance by binning
the projections in histograms, but the
semi-discrete setting offers the possibility
of starting from more generic and more precise
representations. For instance, if the
target shape is defined by a 3D triangular mesh,
one simply has to sum the projection of
the triangles into a piecewise affine function.
One may consider that the triangles have some
thickness to avoid Dirac masses
in borderline cases.

For the semi-discrete setting, the
unidimensional displacement proposals are defined
by the displacements between the Dirac masses and
the barycenter of the corresponding cells.
Figure~\ref{fig:shape-registration} illustrates the
two numerical experiments used to benchmark
the semi-discrete approach for the FIST algorithm. 

\begin{figure}
    \centering
    \includegraphics[width=0.45\textwidth]{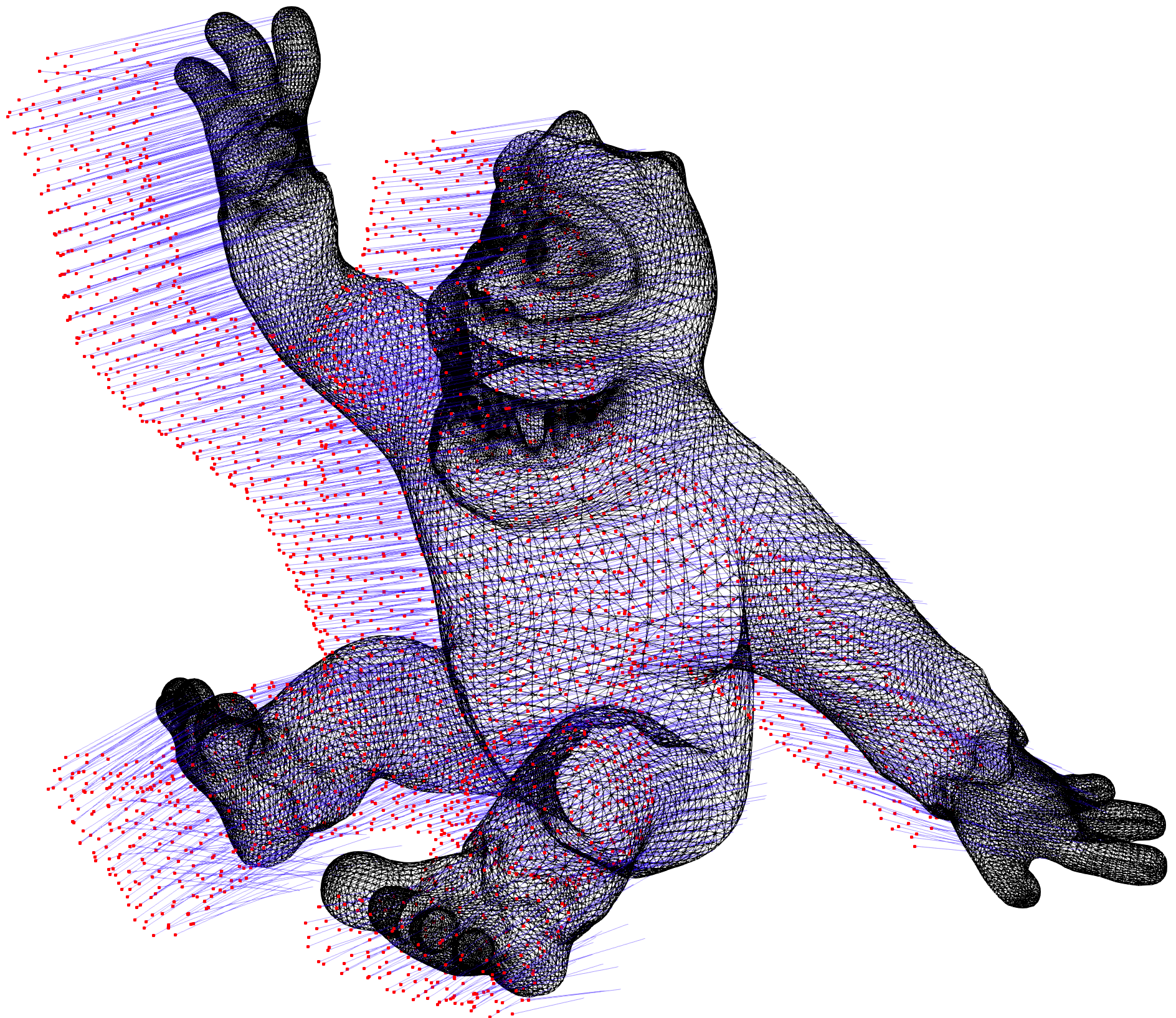}
    \includegraphics[width=0.45\textwidth]{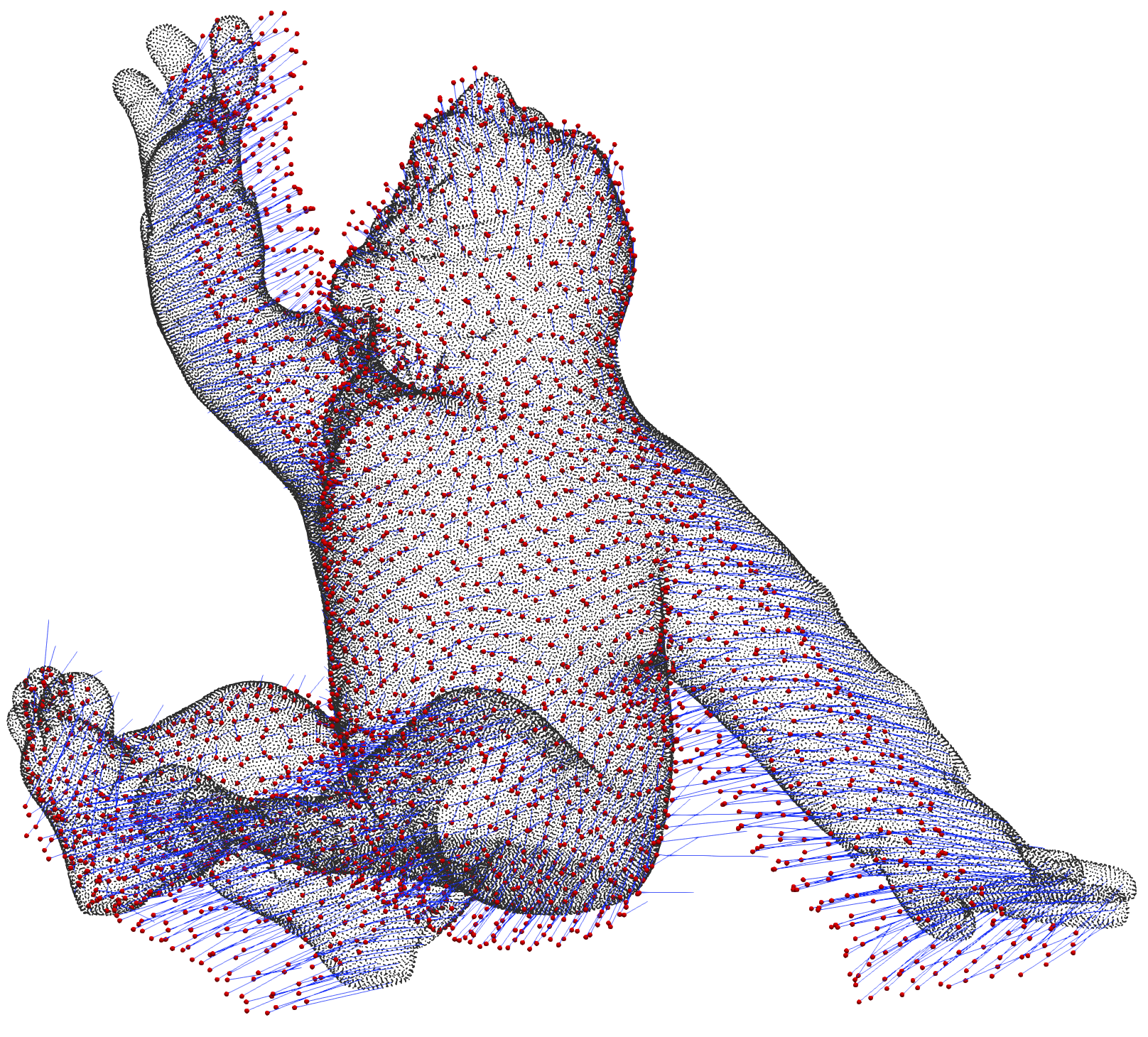}
    
    \caption{Illustration of an iteration
    of the FIST algorithm for two semi-discrete
    settings. For the left image, the
    unidimensional densities are computed using
    the projected mass of the triangles, which
    gives a piecewise affine function.
    For the right image, unidimensional densities
    are histogram of the projected points.
    The black items (lines and dots) represent
    the target shape. The red dots represent
    the position of the Dirac masses
    (the source points).
    The blue lines represent the displacement
    from the Dirac positions to the estimated 3D
    barycenters, that come from the combinations
    of the 1D barycenters with their
    respective directions.}
    
    \label{fig:shape-registration}
\end{figure}

In these examples, computation time is strongly
influenced by the distance to the solution.
Nonetheless, in general, we observe that semi-discrete
problems are approximately 4 to 5 times slower
than fully discrete problems solved using
Bonneel’s algorithm (\cite{bonneel2015sliced}).
This additional cost can be seen as the trade-off
for increased generality and potentially higher
accuracy. However, there is clearly room for
further optimization, particularly to
accelerate convergence.
For instance, in the rigid body registration
examples, the Newton method requires an average
of 63 iterations --- a very high number.
While strategies to reduce this iteration count
have been explored, they have not yet been
implemented within the regularization
framework proposed in this work.

\subsection{Synthetic example}

In this example, we explore the gains in
accuracy that come from being able to use
generic densities, without having to discretize
them into sums of Dirac masses.
The density $\rho$ for this example is equal to
$(1-|x|)_+$. We start with 100 randomly placed
points in the interval $[-1,1]$.
They are associated with uniform weights,
such that the sum is equal to $3/4$
(i.e.~$3/4$ of the mass of $\rho$).
The barycenters are then
computed, with the semi-discrete method,
and with the discrete-discrete method with
a growing number of points for the
discretization of $\rho$.
For the discrete-discrete setting, the spot
library has been used (\url{https://github.com/nbonneel/spot}).
It is highly optimized in terms of execution time.
Nevertheless, the library requires that there be
a correspondence for each Dirac. We therefore
repeated the list of initial Dirac masses in order
to obtain the right mass ratio for the partial
transport problem.

Figure~\ref{fig:barycenter-error} shows the accuracy
losses due to the discretization of $\rho$ used
to reduce to a discrete-discrete problem,
while Figure~\ref{fig:execution-time-DD-vs-SD} shows the relative
timings to compute the barycenters using a
the discrete-discrete and semi-discrete settings, respectively.
As expected, in this case, the discrete-discrete
leads to discretization errors.
Of course, increasing the accuracy leads
to increased execution times, and can quickly
exceed to time needed to solve the exact problem
with a semi-discrete setting.

\begin{figure}
    \centering
    \input{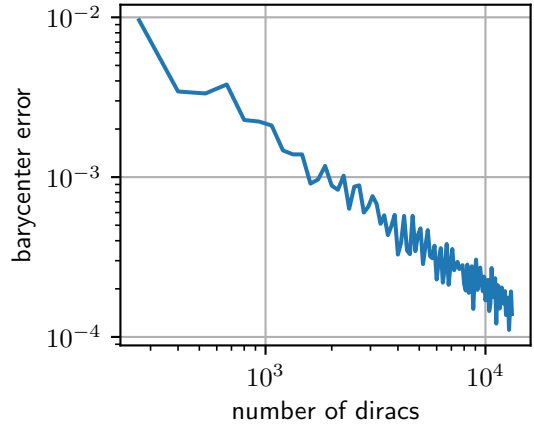}
    
    \caption{Accuracy losses for a
    discrete-discrete setting vs a semi-discrete
    setting for $\rho=(1-|x|)_+$ with 100
    Dirac masses.
    The accuracy is measured as the Euclidean
    distance between the exact barycenters
    (computed using semi-discrete optimal
    transport), and the ones computed using
    the points taken from the discretization
    of $\rho$ with a discrete-discrete setting.}
    
    \label{fig:barycenter-error}
\end{figure}

\begin{figure}
    \centering
    \input{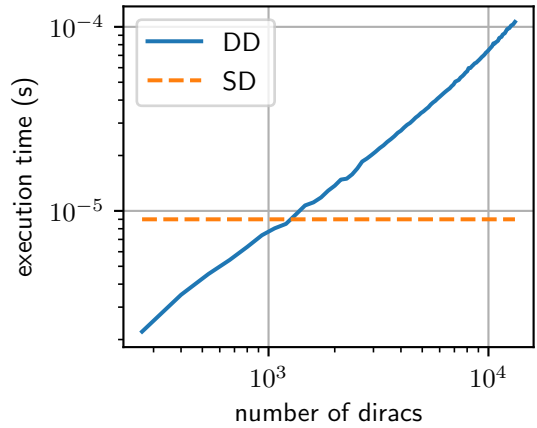}
    
    \caption{Execution time to solve the
    partial optimal transport problem with
    a discrete-discrete (DD) setting and with
    a semi-discrete (SD) setting.
    The Dirac masses for the count in the
    abscissa are the one that are used to
    discretize $\rho$, to be able to use the
    discrete-discrete setting}
    
    \label{fig:execution-time-DD-vs-SD}
\end{figure}

\section*{Acknowledgments}
The authors would like to warmly thank
Quentin Mérigot and Luca Nenna for their
guidance and support in the elaboration
of this work. This work has been partially
supported by the UDOPIA doctoral program,
as well as the Agence nationale de la recherche,
through the ANR project GOTA (ANR-23-CE46-0001)
and the PEPR PDE-AI project (ANR-23-PEIA-0004).

\bibliographystyle{plain}
\bibliography{bibliography}

\appendix

\section{Proof of Theorem~\ref{thm:regularity-multiD}}
\label{sec:appendix-multiD}

Thanks to Proposition~\ref{prop:C1-regularity},
it is enough to prove that the functions
$G_i : \psi \mapsto
\int_{\RLag_i(\psi)}
\rho(x)dx$ are $\Cc^1$ on $\mathcal{D}$.
Notice that $\mathcal{D}$ is both open and convex,
since the inequalities
$\|x-y_i\|^2-\psi_i \leq
\|x-y_j\|^2-\psi_j$ and
$\|x-y_i\|^2\leq \psi_i$
are respectively linear and convex in $(x,\psi)$.
Fix $\psi^0\in\mathcal{D}$
and let $\psi^t = \psi^0 + t v$,
where $v\in\R^N$ is a non-zero vector.
For the sake of readability, we denote
$\mathrm{V}_j^t = \RLag_j(\psi^t)$,
$V_\infty^t=\RLag_\infty(\psi^t)$,
and
$L_j^t=\Lag_j(\psi^t)$.

\subsection*{Polar coordinates}
Let us express the integral in polar coordinates
centered at the Dirac position $y_i$.
Using~\cite[Theorem~2.49]{folland1999real}, we have
\begin{align}
G_i(\psi^t) &=
\int_{\R^d} \mathbbm{1}_{V_i^t}(x)\rho(x) dx
= \int_{\S^{d-1}}\left(\int_0^\infty
\mathbbm{1}_{V_i^t}(s\theta)
\rho(y_i + s\theta)s^{d-1}ds\right)
d\H^{d-1}(\theta)\nonumber\\
\label{eq:integral-over-sphere}
&= \int_{\S^{d-1}}
\left(\int_0^{\omega_{i}^t(\theta)}
\rho(y_i + s\theta)|s|^{d-1}ds\right)
d\H^{d-1}(\theta),
\end{align}
where $\omega_i^t(\theta)$ is the supremum of
the set of real numbers $s\in\R$ for which
$y_i+s\theta \in V_i^t$,
or $0$ if no such $s$ exists.
The last equality is obtained by making the change
of variable $\theta\leftrightarrow-\theta$ in
half of the integral, and summing back the two halves.
Note that equation~\eqref{eq:integral-over-sphere}
holds even when $y_i$ is not in the
interior of $V_i^t$, because the
$\omega_i^t(\theta)$ can be negative.

\subsection*{Cover of the sphere}
We now define a cover of the sphere $\S^{d-1}$
by $N+1$ sets with $\H^{d-1}$-negligible
pairwise intersections. These sets correspond
to the different facets of the boundary
of the cell (some of which may be empty),
except for the last set, which corresponds to the lines
passing through $y_i$ without intersecting the cell.
Namely,
for each $j\in\{1,\dots,N,\infty\}$ distinct from $i$,
we let $\Theta_{i,j}^t$ be the set of unit vectors
$\theta\in\S^{d-1}$ such that
$y_i+\omega_i^t(\theta)\in V_j^t$.
As for $\Theta_{i,i}^t$, we define it as the set of unit
vectors which do not belong to any other $\Theta_{i,j}^t$.
It is immediate to check that
\begin{equation}
\label{eq:def-omega_ij^t-of-theta}
\forall \theta\in\Theta^t_{i,j},
\qquad
\omega_i^t(\theta) =
\begin{cases}
\frac{\|y_j-y_i\|^2 - (\psi_j^t-\psi_i^t)}{
2\langle y_j-y_i, \theta\rangle}
&\qquad\mathrm{if}\quad j\notin\{i,\infty\}
\\
\sqrt{\psi_i^t}
&\qquad\mathrm{if}\quad j=\infty, \\
0 &\qquad\mathrm{if}\quad j=i.
\end{cases}
\end{equation}
The first expression is derived by developing the squares
in the equality
$\|(y_i+\omega\theta)-y_i\|^2-\psi_i^t
=\|(y_j+\omega\theta)-y_j\|^2-\psi_j^t$,
which defines the hyperplane of points
$x=y_i+\omega\theta$ for which traveling to $y_i$
costs exactly as much as traveling to $y_j$.
The same goes for the second expression, except
that we replace the second member of the equality
by zero.

\subsection*{Differentiating under the integral sign}
We now make the temporary additional assumption that
$y_i$ is not contained in any of
the $N$ iso-hypersurfaces defined by
$\underline{c}(x,y_i)-\psi^0
= \underline{c}(x,y_j)-\psi_j^t$,
$j\in\{1,\dots,N,\infty\}\setminus\{i\}$.
Thanks to this assumption,
for $t$ in a small enough neighborhood of zero,
each line passing through $y_i$ intersects the boundary
of the cell in at most two points.
In particular, since the facets of the cell
vary continuously in $t$, every $\theta$
in the relative interior of some $\Theta_{i,j}^0$
is also in $\Theta_{i,j}^t$ for each $t$ in
a small enough neighborhood of $0$
(which depends on $\theta$). As a result,
$t\mapsto \omega_i^t(\theta)$ is differentiable
around $0$ for every such $\theta$, and hence
for $\H^{d-1}$-almost every $\theta\in\S^{d-1}$,
and equation~\eqref{eq:def-omega_ij^t-of-theta}
can be used to find the derivative.
We can therefore differentiate under the integral sign
in~\eqref{eq:integral-over-sphere} and find
\begin{align*}
\partial_{t=0}[G_i(\psi^t)]
&=
\int_{\Theta_i^0} [\partial_t|_{t=0}\omega_i^0(\theta)]
\rho(y_i + \omega_i^0(\theta)\theta)
|\omega_i^0(\theta)|^{d-1} d\H^{d-1}(\theta) \\
&= \sum_{j\in\{1,\dots,N\}\setminus\{i\}}
\frac{-(v_j-v_i)}{2\langle y_j-y_i,\theta\rangle}
\int_{\Theta_{i,j}^0}
\rho(y_i + \omega_{i,j}^0(\theta)\theta)
|\omega_{i,j}^0(\theta)|^{d-1} d\H^{d-1}(\theta) \\
\nonumber
&\quad + \frac{v_i}{2\sqrt{\psi_i^0}}
\int_{\Theta_{i,\infty}^0}
\rho(y_i + \omega_{i,\infty}^0(\theta)\theta)
|\omega_{i,\infty}^0(\theta)|^{d-1} d\H^{d-1}(\theta).
\end{align*}
Since all terms are linear in $v$ and since $v$
was an arbitrary vector, we conclude that
$G_i$ is differentiable at $\psi^0$.
The changes of variables
$\theta\mapsto y_i + \omega_{i,j}^0(\theta)\theta$
allow to recover the expressions given
in~\eqref{eq:non-diag-coeff-Hessian-multiD}
and~\eqref{eq:diag-coeff-Hessian-multiD}
Indeed, thanks to the
additional assumption, these maps are
diffeomorphisms from their respective facet
$V_i^t\cap V_j^t$, $j\neq i$ of the cell onto their
corresponding set $\Theta_{i,j}^t$ of unit vectors.
The continuity in $\psi\in\mathcal{D}$ of the
facet integrals appearing
in~\eqref{eq:non-diag-coeff-Hessian-multiD}
and~\eqref{eq:diag-coeff-Hessian-multiD}
can be established as in the proof
of~\cite[Proposition~B.1]{kitagawa2019convergence}
by Mérigot et al.

Since nonzero mass of every Laguerre cell implies
strict positivity of all the components of the potential,
the $\psi\in\mathcal{D}$ which do not satisfy the additional
assumption are contained in the $N-1$ hyperplanes
of $\R^N$ of equations $\psi_{j}-\psi_{i} = \|y_{j}-y_{i}\|^2$.
By classical arguments, $G_i$ is thus $\Cc^1$
on the whole domain $\mathcal{D}$, with first partial
derivatives given
by~\eqref{eq:non-diag-coeff-Hessian-multiD}
and~\eqref{eq:diag-coeff-Hessian-multiD},
up to a minus sign.
In fact, the singular $\psi$ are an artifact due to
the polar coordinates being centered at $y_i$.
Centering these coordinates at some arbitrary point
in the interior of the cell (e.g. the $\rho$-barycenter of
the cell), one can directly prove differentiability of $G_i$
on all of $\mathcal{D}$. At the cost, of course, of
a less concise expression for $\omega_i^t(\theta)$
when $\theta\in\Theta_{i,\infty}^t$, because
projecting a sphere on a ball is less simple when
they are not concentric.

\subsection*{Strict concavity}
Since $G$ is $\Cc^1$ on $\mathcal{D}$,
the functional $\mathcal{K}$ is $\Cc^2$
on the same domain.
Thanks to~\eqref{eq:non-diag-coeff-Hessian-multiD}
and~\eqref{eq:diag-coeff-Hessian-multiD},
for $\psi\in\mathcal{D}$ the Hessian matrix
$D^2\mathcal K(\psi)$
is irreducible, weakly diagonally dominant,
and has at least one strictly dominant
diagonal coefficient --- take
any cell $\RLag_i$ which has
non-degenerate intersection with
$\spt\rho\setminus
\cup_{j\neq i}\RLag_j$.
The invertibility of $D^2\mathcal K$ then
follows from Taussky's
theorem~\cite[Theorem~II]{horn2012matrix}, and
in particular, the optimal potential is unique.

\section{Proof of Lemma~\ref{lem:technical-SDOPT-1D}}
\label{sec:appendix-uniqueness-SDOPT-1D}

As explained in section~\ref{subsec:SDOPT-1D},
for $\psi\in\mathcal{D}$,
the (unrestricted) Laguerre cells write
$\Lag_i = [z_{i-1},z_i]$, where
\begin{equation*}
z_i(\psi) =
\begin{cases}
-\infty \quad &\mathrm{if} \quad i = 0, \\
\frac{(y_{i+1}^2-\psi_{i+1})-(y_i^2-\psi_i)}
{2(y_{i+1} - y_{i})}
\quad &\mathrm{if} \quad
0 < i < N, \\
+\infty \quad &\mathrm{if} \quad i = N,
\end{cases}
\end{equation*}
and we can therefore express the
\textit{restricted} Laguerre cells as
$\RLag_i = [a_i,b_i]$, where
\begin{equation*}
\begin{cases}
a_i(\psi) = \max\{z_{i-1}(\psi),
&y_i - \sqrt{\psi_i}\},\\
b_i(\psi) = \min\{z_{i}(\psi),
&y_i + \sqrt{\psi_i}\}.
\end{cases}
\end{equation*}

\begin{lemma}
\label{lem:directional-derivatives-1D}
Fix $\psi \in \mathcal D$
and let $v \in \R^N$ be non-zero. Then
\begin{equation}
\label{eq:directional-derivative-of-a}
\partial_v^+ a_i(\psi)
= \begin{cases}
\frac{-(v_i-v_{i-1})}{2(y_i-y_{i-1})}
&\quad \mathrm{if} \quad z_{i-1}(\psi)
> y_i - \sqrt{\psi_i}, \\
\frac{-v_i}{2\sqrt{\psi_i}}
&\quad \mathrm{if} \quad z_{i-1}(\psi)
< y_i - \sqrt{\psi_i}, \\
\max\left\{
\frac{-(v_i-v_{i-1})}{2(y_i-y_{i-1})},
\frac{-v_i}{2\sqrt{\psi_i}}
\right\}
&\quad \mathrm{if} \quad z_{i-1}(\psi)
= y_i - \sqrt{\psi_i},
\end{cases}
\end{equation}
and
\begin{equation}
\label{eq:directional-derivative-of-b}
\partial_v^+ b_i(\psi)
= \begin{cases}
\frac{v_i}{2\sqrt{\psi_i}}
&\quad \mathrm{if} \quad z_{i}(\psi)
> y_i + \sqrt{\psi_i}, \\
\frac{-(v_{i+1}-v_i)}{2(y_{i+1}-y_i)}
&\quad \mathrm{if} \quad z_{i}(\psi)
< y_i + \sqrt{\psi_i}, \\
\min\left\{
\frac{-(v_{i+1}-v_i)}{2(y_{i+1}-y_i)},
\frac{v_i}{2\sqrt{\psi_i}}
\right\}
&\quad \mathrm{if} \quad z_{i}(\psi)
= y_i + \sqrt{\psi_i}.
\end{cases}
\end{equation}
Moreover, the unilateral directional
derivative of the mass of the $i$\textsuperscript{th}
restricted Laguerre cell is
\begin{equation}
\label{eq:directional-derivative-of-G}
\partial_v^+G_i =
\rho(b_i) \partial_v^+b_i
- \rho(a_i) \partial_v^+a_i,
\end{equation}
so we can write
$\partial_v^+G(\psi) = H(\psi,v)v$
where $H(\psi,v) \in \R^{N\times N}$
is a tridiagonal matrix whose rows are all
weakly diagonally dominant.
\end{lemma}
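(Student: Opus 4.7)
The plan is to exploit the explicit decomposition $\RLag_i = [a_i, b_i]$ with $a_i = \max\{z_{i-1}, y_i - \sqrt{\psi_i}\}$ and $b_i = \min\{z_i, y_i + \sqrt{\psi_i}\}$, where $z_{i-1}, z_i$ are the smooth functions of $\psi$ given by~\eqref{eq:z_i}. The first step is to compute the unilateral derivatives $\partial_v^+ a_i$ and $\partial_v^+ b_i$ using the general fact that for two smooth scalar functions $g_1, g_2$ of $\psi$, the unilateral derivative of $\max\{g_1, g_2\}$ in direction $v$ equals $\partial_v g_1$ when $g_1 > g_2$ locally, equals $\partial_v g_2$ when $g_2 > g_1$ locally, and equals $\max\{\partial_v g_1, \partial_v g_2\}$ when $g_1 = g_2$ at $\psi$; this is immediate from the first-order Taylor expansion. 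Applying this to $a_i$ and $b_i$, and computing $\partial_v z_{i-1} = -(v_i - v_{i-1})/(2(y_i - y_{i-1}))$ and $\partial_v(y_i \pm \sqrt{\psi_i}) = \pm v_i/(2\sqrt{\psi_i})$, gives directly formulas~\eqref{eq:directional-derivative-of-a} and~\eqref{eq:directional-derivative-of-b}, with the conventions $z_0 \equiv -\infty$, $z_N \equiv +\infty$ ensuring that only the ``ball'' branch is active at the extreme indices.

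Next, I would establish the FTC-style identity~\eqref{eq:directional-derivative-of-G} by writing
\begin{equation*}
G_i(\psi + tv) - G_i(\psi) = \int_{b_i(\psi)}^{b_i(\psi+tv)} \rho(x)\,dx - \int_{a_i(\psi)}^{a_i(\psi+tv)} \rho(x)\,dx
\end{equation*}
for $t > 0$ small enough that the orientation of the cell is preserved (which is ensured by $\psi \in \mathcal{D}$). Dividing by $t$ and letting $t \to 0^+$, the mean value theorem together with the continuity of $\rho$ at $a_i(\psi)$ and $b_i(\psi)$ (both lie in $\spt\rho$ since the cell has nonzero $\rho$-measure) yields the announced expression.

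Finally, I would construct the matrix $H(\psi, v)$ by expanding this identity. In every branch of~\eqref{eq:directional-derivative-of-a}--\eqref{eq:directional-derivative-of-b}, the quantities $\partial_v^+ a_i$ and $\partial_v^+ b_i$ are linear combinations of $v_{i-1}, v_i, v_{i+1}$ (in the equality case one simply selects whichever branch achieves the max/min for the given $v$), so substituting into~\eqref{eq:directional-derivative-of-G} produces a tridiagonal row. The diagonal-dominance check is then a case-by-case verification: the off-diagonal entries $H_{i,i-1}$ and $H_{i,i+1}$ are always of the form $-\rho(a_i)/(2(y_i - y_{i-1}))$ or $-\rho(b_i)/(2(y_{i+1} - y_i))$ (or zero when the corresponding branch is of the ``ball'' type), while $H_{i,i}$ collects exactly the opposite signs plus the non-negative ``ball-type'' contributions $\rho(a_i)/(2\sqrt{\psi_i})$ and $\rho(b_i)/(2\sqrt{\psi_i})$ whenever these branches are active; hence $H_{i,i} \geq |H_{i,i-1}| + |H_{i,i+1}|$.

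The only genuine subtlety is the bookkeeping at the equality points $z_{i-1}(\psi) = y_i \mp \sqrt{\psi_i}$, where the matrix $H$ must depend on $v$; but since each of the two competing linear forms individually produces a row satisfying weak diagonal dominance, the dominance property persists regardless of which branch is selected, and the rest of the argument goes through uniformly.
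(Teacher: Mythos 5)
Your proof is correct and takes essentially the same route as the paper: one-sided derivatives of the max/min of the smooth quantities $z_{i-1},z_i$ and $y_i\mp\sqrt{\psi_i}$, differentiation of the cell mass through the cumulative distribution of $\rho$ (your mean-value argument is the paper's $G_i=F_\rho(b_i)-F_\rho(a_i)$ spelled out), and a sign inspection of the resulting tridiagonal rows giving weak diagonal dominance, including the correct handling of the $v$-dependent branch selection at equality points. The only loose spot --- shared with the paper --- is the justification of continuity of $\rho$ at $a_i,b_i$: nonzero cell mass only guarantees the cell meets $\spt\rho$, and an endpoint may sit outside the support (harmless, since $F_\rho$ is locally constant there) or exactly on its boundary, where $\rho$ extended by zero is discontinuous and the stated formula needs the one-sided value of $\rho$ matching the sign of $\partial_v^+a_i$ or $\partial_v^+b_i$.
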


There may be several possibilities
for the coefficients of $H(\psi,v)$,
so we fix  coefficients to be those which are
directly readable from~\eqref{eq:directional-derivative-of-a}
and~\eqref{eq:directional-derivative-of-b}.
A case where there might be an ambiguity is
for instance the case where
$z_{i-1}(\psi) = y_i - \sqrt{\psi_i}$
and where the two quantities in the maximum
in~\eqref{eq:directional-derivative-of-a} are equal,
in which case  we choose
to read the coefficients from
the quantity on the right in the maximum.
We use the same convention for the
other case of ambiguity, namely
when $z_i(\psi) = y_i + \sqrt{\psi_i}$
and both quantities in the minimum
in~\eqref{eq:directional-derivative-of-b} are equal,
in which case we read the coefficients from
the second argument of the minimum.

\begin{proof}
The expressions of
$\partial_v^+a_i(\psi)$
and $\partial_v^+b_i(\psi)$
come from the fact that
\begin{equation*}
\partial_v^+z_i(\psi)
= \frac{-(v_{i+1}-v_i)}{2(y_{i+1}-y_i)}
\qquad \mathrm{and} \qquad
\partial_v^+\sqrt{\psi_i}
= \frac{v_i}{2\sqrt{\psi_i}}.
\end{equation*}
As for the expression of $\partial_v^+G_i$,
it follows directly from the fact that
$G_i = F_\rho(b_i) - F_\rho(a_i)$,
where $F_\rho$ is the cumulative probability
distribution of $\rho$.
At last,
from~\eqref{eq:directional-derivative-of-a},
~\eqref{eq:directional-derivative-of-b},
and~\eqref{eq:directional-derivative-of-G},
we deduce that $H(\psi, v)$ is tridiagonal,
with weakly diagonally dominant rows.
\end{proof}

\begin{lemma}
\label{lem:block-decomposition}
Fix $\psi \in \mathcal D$
and let $v \in \R^N$ be non-zero.
Then the matrix $H(\psi,v)$ defined in
Lemma~\ref{lem:directional-derivatives-1D}
is symmetric. As a result, it writes
\begin{equation}
\label{eq:block-decomposition}
H(\psi,v) =
\begin{pmatrix}
    H_{1} & & \\
    & \ddots & \\
    & & H_{r}
\end{pmatrix}
\end{equation}
where each $H_j$ is a symmetric, tridiagonal,
irreducible, diagonally dominant matrix.
Of course, the right-hand side also
depends on $\psi$ and $v$,
but we do not mention it in the notation
for the sake of readability.
\end{lemma}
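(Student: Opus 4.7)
The plan is to establish the symmetry of $H(\psi,v)$ first, and then deduce the block decomposition as a more or less immediate structural consequence.

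For the symmetry, I would compare $H_{i,i+1}$ and $H_{i+1,i}$ by reading them off equations~\eqref{eq:directional-derivative-of-a}, \eqref{eq:directional-derivative-of-b} and~\eqref{eq:directional-derivative-of-G}. The coefficient $H_{i,i+1}$ only comes from $\rho(b_i)\partial_v^+b_i$, and is non-zero exactly when $b_i=z_i$; symmetrically, $H_{i+1,i}$ only comes from $-\rho(a_{i+1})\partial_v^+a_{i+1}$ and is non-zero exactly when $a_{i+1}=z_i$. The crucial observation is that these two conditions are equivalent. Indeed, by the very definition of $z_i$ as the boundary between the unrestricted Laguerre cells $\Lag_i$ and $\Lag_{i+1}$, one has the identity
\begin{equation*}
(z_i-y_i)^2-\psi_i=(z_i-y_{i+1})^2-\psi_{i+1},
\end{equation*}
so that $(z_i-y_i)^2\leq\psi_i$ if and only if $(y_{i+1}-z_i)^2\leq\psi_{i+1}$, i.e.\ $b_i=z_i$ iff $a_{i+1}=z_i$. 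In the shared case $b_i=a_{i+1}=z_i$, both off-diagonal entries equal $-\rho(z_i)/(2(y_{i+1}-y_i))$, so they coincide; in the other case the cells $\RLag_i$ and $\RLag_{i+1}$ are separated (or touch only tangentially), and both off-diagonal entries vanish. The edge cases of equality in~\eqref{eq:directional-derivative-of-a}--\eqref{eq:directional-derivative-of-b} are handled by the convention described just after Lemma~\ref{lem:directional-derivatives-1D}, which systematically selects the ball-type branch and thus produces zeros on both sides simultaneously.

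Once symmetry is established, the block decomposition is a purely linear-algebraic observation. A symmetric tridiagonal matrix has the property that $H_{i,i+1}=H_{i+1,i}$ vanishes or doesn't vanish simultaneously, so it naturally splits along the set of indices $I=\{\,i\in\Iint{1,N-1} : H_{i,i+1}=0\,\}$ of "breaks". Reordering the rows and columns along the maximal runs of consecutive indices with non-zero super-diagonal entries yields the block diagonal form~\eqref{eq:block-decomposition}. Each block $H_j$ is symmetric and tridiagonal as a principal submatrix of $H$, irreducible by maximality of the corresponding run of indices, and diagonally dominant because Lemma~\ref{lem:directional-derivatives-1D} already guarantees weak diagonal dominance of each row of $H$ (and no diagonal coefficient is affected by the reordering).

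The main obstacle is really the symmetry statement, and within it the verification in the ``touching'' case $b_i=a_{i+1}=z_i$ that the prefactors $\rho(b_i)$ and $\rho(a_{i+1})$ agree, as well as a careful bookkeeping of the equality cases in~\eqref{eq:directional-derivative-of-a}--\eqref{eq:directional-derivative-of-b} so that the chosen representative of $H(\psi,v)$ is indeed symmetric. The rest of the argument is then structural.
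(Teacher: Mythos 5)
Your treatment of the two non-degenerate regimes is fine and essentially the paper's: when $y_{i+1}-\sqrt{\psi_{i+1}}<z_i<y_i+\sqrt{\psi_i}$ both entries are read from the $z$-branch and equal $-\rho(z_i)/(2(y_{i+1}-y_i))$, and when $y_i+\sqrt{\psi_i}<z_i<y_{i+1}-\sqrt{\psi_{i+1}}$ both vanish; your equivalence "$b_i=z_i\iff a_{i+1}=z_i$" via $(z_i-y_i)^2-\psi_i=(z_i-y_{i+1})^2-\psi_{i+1}$ is a nice shortcut, though it silently uses $z_i>y_i-\sqrt{\psi_i}$ and $z_i<y_{i+1}+\sqrt{\psi_{i+1}}$, which you should extract from the non-emptiness of the two cells (as the paper does at the start of its proof).

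The genuine gap is the degenerate configuration $z_i=y_i+\sqrt{\psi_i}=y_{i+1}-\sqrt{\psi_{i+1}}$, which is precisely the case that makes $H$ depend on $v$ and is the crux of the lemma. There $b_i=a_{i+1}=z_i$, so by your case split you would conclude that both entries equal $-\rho(z_i)/(2(y_{i+1}-y_i))$; but the one-sided derivatives are a genuine $\min$ and $\max$ over two distinct candidate values for generic $v$, and when the ball-type branch wins the corresponding off-diagonal entry is $0$, not $-\rho(z_i)/(2(y_{i+1}-y_i))$. You cannot dispose of this via the convention stated after Lemma~\ref{lem:directional-derivatives-1D}: that convention only resolves the measure-zero tie where the two candidate derivative values coincide (equivalently $\frac{v_i}{\sqrt{\psi_i}}+\frac{v_{i+1}}{\sqrt{\psi_{i+1}}}=0$), and it does not "systematically select the ball-type branch" otherwise. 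What is actually needed — and what the paper's Case~3 proves — is that the branch selections on the two sides are coupled: using $y_{i+1}-y_i=\sqrt{\psi_i}+\sqrt{\psi_{i+1}}$, one has
\begin{equation*}
\frac{v_i-v_{i+1}}{2(\sqrt{\psi_i}+\sqrt{\psi_{i+1}})}<\frac{v_i}{2\sqrt{\psi_i}}
\quad\Longleftrightarrow\quad
0<\frac{v_i}{\sqrt{\psi_i}}+\frac{v_{i+1}}{\sqrt{\psi_{i+1}}}
\quad\Longleftrightarrow\quad
\frac{v_i-v_{i+1}}{2(\sqrt{\psi_i}+\sqrt{\psi_{i+1}})}>\frac{-v_{i+1}}{2\sqrt{\psi_{i+1}}},
\end{equation*}
so the $\min$ defining $\partial_v^+b_i$ picks the $z$-branch exactly when the $\max$ defining $\partial_v^+a_{i+1}$ does, and the two off-diagonal entries agree (both $-\rho(z_i)/(2(y_{i+1}-y_i))$ or both $0$), with the tie handled by the convention. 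Without this coupling argument your symmetry claim is unproved in the only delicate case. The subsequent block-decomposition paragraph is fine (no reordering is even needed, since a symmetric tridiagonal matrix splits along consecutive index runs separated by vanishing off-diagonal entries).
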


\begin{proof}
Since $H(\psi,v)$ is tridiagonal,
it suffices to prove that its subdiagonal and
superdiagonal are equal.
We thus fix an index $1\leq i \leq N-1$.
Since the restricted Laguerre cells
of index $i$ and $i+1$ are non-empty,
we must have
$y_i - \sqrt{\psi_i} < z_i(\psi)$ and
$z_i(\psi) < y_{i+1} + \sqrt{\psi_{i+1}}$.
Denoting $\delta = y_{i+1}-y_i$, we have
\begin{equation}
\begin{cases}
z_i(\psi) - (y_i - \sqrt{\psi_i})
&= \frac{1}{2\delta} \left[
\delta^2 + 2\sqrt{\psi_i} \delta
- (\psi_{i+1}-\psi_i) \right], \\
(y_{i+1} + \sqrt{\psi_{i+1}}) - z_i(\psi)
&= \frac{1}{2\delta} \left[
\delta^2 + 2\sqrt{\psi_{i+1}} \delta
+ (\psi_{i+1}-\psi_i) \right],
\end{cases}
\end{equation}
and the roots of the two left-hand side polynomials
are respectively
$-\sqrt{\psi_i}\pm\sqrt{\psi_{i+1}}$ and
$\pm\sqrt{\psi_i}-\sqrt{\psi_{i+1}}$,
so since the right-hand sides are positive we deduce
\mbox{$|\sqrt{\psi_{i+1}}-\sqrt{\psi_i}|<\delta$}.

Similarly,
\begin{equation}
\begin{cases}
z_i(\psi) - (y_i + \sqrt{\psi_i})
&= \frac{1}{2\delta}\left[
\delta^2 - 2\sqrt{\psi_i}\delta
- (\psi_{i+1}-\psi_i) \right], \\
(y_{i+1}-\sqrt{\psi_{i+1}}) - z_i(\psi)
&= \frac{1}{2\delta}\left[
\delta^2 - 2\sqrt{\psi_{i+1}}\delta
+ (\psi_{i+1}-\psi_i) \right],
\end{cases}
\end{equation}
and the roots of the left hand-sides
are respectively
$\sqrt{\psi_i}\pm\sqrt{\psi_{i+1}}$ and
$\pm\sqrt{\psi_i}+\sqrt{\psi_{i+1}}$,
so unless $\delta = \sqrt{\psi_i}+\sqrt{\psi_{i+1}}$
the two polynomials are non-zero and have
the same sign. In other words, we have either
\begin{equation}
\label{eq:case1-cells-not-touching-1D}
y_i + \sqrt{\psi_i} < z_i(\psi)
< y_{i+1} - \sqrt{\psi_{i+1}}
\end{equation}
or
\begin{equation}
\label{eq:case2-cells-touching-1D}
y_{i+1} - \sqrt{\psi_{i+1}}
< z_i(\psi) < y_i + \sqrt{\psi_i}.
\end{equation}

\begin{enumerate}[label=\textbf{Case \arabic*}]
\item
\label{item:case1-cells-not-touching-1D}
If~\eqref{eq:case1-cells-not-touching-1D} holds,
then for any $\psi'$ close enough to $\psi$,
$b_i$ does not depend on $\psi'_{i+1}$
and $a_{i+1}$ does not depend on $\psi'_i$,
so $H_{i+1,i}(\psi,v)
= H_{i,i+1}(\psi,v) = 0$.

\item
\label{item:case2-cells-touching-1D}
If~\eqref{eq:case2-cells-touching-1D} holds,
then on a neighborhood of $\psi$ we have
$b_i = a_{i+1} = z_i$,
and so a straightforward computation yields
$H_{i+1,i}(\psi,v) = H_{i,i+1}(\psi,v) =
-\rho(z_i(\psi))
\frac{1}{2(y_{i+1}-y_i)}$,
thanks to the expression of $z_i$.

\item
\label{item:case3-degenerate-1D}
Let us now handle the degenerate case where
$\delta = \sqrt{\psi_i} + \sqrt{\psi_{i+1}}$.
Then $z_i(\psi) = y_i + \sqrt{\psi_i}
= y_{i+1} - \sqrt{\psi_{i+1}}$,
which implies that $b_i(\psi)=a_{i+1}(\psi)$,
and thanks to~\eqref{eq:extremities-of-RLag-1D}
we find
\begin{equation*}
\partial_v^+b_{i}(\psi)
= \min\left\{
\frac{v_i-v_{i+1}}{2(
\sqrt{\psi_i} +
\sqrt{\psi_{i+1}})},
\frac{v_i}{2\sqrt{\psi_i}}
\right\}
\end{equation*}
and
\begin{equation*}
\partial^+_v a_{i+1}(\psi)
= \max\left\{
\frac{v_{i}-v_{i+1}}{2(
\sqrt{\psi_i} +
\sqrt{\psi_{i+1}})
)},
\frac{-v_{i+1}}{2\sqrt{\psi_{i+1}}}
\right\}.
\end{equation*}
To conclude, we notice that
\begin{align*}
\frac{v_{i}-v_{i+1}}{2(
\sqrt{\psi_i} +
\sqrt{\psi_{i+1}})}
< \frac{v_i}{2\sqrt{\psi_i}}
&\qquad \Longleftrightarrow \qquad
0 < \frac{v_i}{\sqrt{\psi_i}}
+ \frac{v_{i+1}}{\sqrt{\psi_{i+1}}} \\
&\qquad \Longleftrightarrow \qquad
\frac{v_{i}-v_{i+1}}{2(
\sqrt{\psi_i} + \sqrt{\psi_{i+1}})}
> \frac{-v_{i+1}}{2\sqrt{\psi_{i+1}}},
\end{align*}
and this series of equivalence still holds if
all strict inequality signs are reversed.
As a consequence, the coefficient of
$v_{i+1}$ in $\partial_v^+b_i(\psi)$
is the opposite of that of
$v_i$ in $\partial_v^+a_{i+1}(\psi)$,
and so once again
$H_{i+1,i}(\psi,v)=H_{i,i+1}(\psi,v)$.
\end{enumerate}
\end{proof}

\begin{remark}
\label{rem:3-cases-1D}
In the proof of Lemma~\ref{lem:block-decomposition},
case~\ref{item:case1-cells-not-touching-1D}
corresponds to the situation where
$\RLag_i$ and $\RLag_{i+1}$ touch.
On the contrary,
case~\ref{item:case2-cells-touching-1D} is
the situation where they touch and the
inequality $z_i<y_i+\sqrt{\psi_i}$ is strict.
At last, case~\ref{item:case3-degenerate-1D}
is the degenerate situation where the
two cells touch but $z_i=y_i+\sqrt{\psi_i}$,
so that in any neighborhood of $\psi$
there is a $\psi'$ such that they do not touch
and another such that they do.
\end{remark}

\begin{lemma}
\label{lem:strictly-dominant-coefficient}
Each $H_j$ in the
decomposition~\eqref{eq:block-decomposition}
has a strictly dominant diagonal coefficient.
\end{lemma}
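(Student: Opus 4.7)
The plan is to introduce the row slack $s_i := H_{i,i} - |H_{i,i-1}| - |H_{i,i+1}|$ and, for each block $H_j$ with index set $\{i_{\min},\ldots,i_{\max}\}$, first verify that $s_i\geq 0$ pointwise and then show that $\sum_i s_i>0$. Combining~\eqref{eq:directional-derivative-of-a}--\eqref{eq:directional-derivative-of-G}, a direct calculation shows that the contribution to $s_i$ coming from the left endpoint $a_i$ of $\RLag_i$ equals $\rho(a_i)/(2\sqrt{\psi_i})$ when $a_i$ is spherical (i.e.\ $a_i = y_i-\sqrt{\psi_i}$), and $0$ when $a_i = z_{i-1}$ (in this case the diagonal contribution is exactly canceled by the corresponding sub-diagonal entry), with a symmetric statement for $b_i$.

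By definition of the blocks (cases~1--3 in the proof of Lemma~\ref{lem:block-decomposition}) and the convention fixed after Lemma~\ref{lem:directional-derivatives-1D}, consecutive cells in a common block truly touch in the sense of case~2, so interior rows $i_{\min}<i<i_{\max}$ have both endpoints Voronoi and $s_i=0$, while the extreme endpoints $a_{i_{\min}}$ and $b_{i_{\max}}$ are necessarily spherical (either because no neighbor exists on that side, or because the corresponding pair falls under case~1 or case~3). The block sum therefore reduces to
\[
\sum_{i=i_{\min}}^{i_{\max}} s_i
= \frac{\rho(a_{i_{\min}})}{2\sqrt{\psi_{i_{\min}}}}
+ \frac{\rho(b_{i_{\max}})}{2\sqrt{\psi_{i_{\max}}}},
\]
and the problem reduces to showing that at least one of these two values of $\rho$ is strictly positive.

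The main obstacle is precisely this reduced claim, and I would handle it by contradiction using the definition of $\mathcal{D}$. If both $a_{i_{\min}}$ and $b_{i_{\max}}$ were outside $\spt\rho=[c,d]$, the block still has positive $\rho$-measure, which forces $a_{i_{\min}}<c$ and $b_{i_{\max}}>d$, so the block interval $[a_{i_{\min}},b_{i_{\max}}]$ already contains all of $\spt\rho$. Since the restricted Laguerre cells of distinct blocks are pairwise disjoint intervals lying outside this one, they cannot meet $\spt\rho$ in positive Lebesgue (equivalently $\rho$-) measure, contradicting either the non-vanishing of the $\rho$-measure of every $\RLag_i$ or the non-vanishing of the $\rho$-measure of the complement of their union; both possibilities are ruled out by the definition of $\mathcal{D}$ given in Lemma~\ref{lem:technical-SDOPT-1D}. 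Hence one of the two extreme spherical endpoints lies in $\spt\rho$, and Assumption~\ref{a:density-SDOPT-1D} forces $\rho$ to be at least $\rho_{\min}>0$ there. The sum of slacks over the block is therefore strictly positive, so at least one $s_i>0$, and the corresponding row of $H_j$ is strictly diagonally dominant.
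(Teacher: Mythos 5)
Your argument is correct and rests on the same mechanism as the paper's proof --- strict dominance can only come from a spherical endpoint weighted by a strictly positive value of the density, and the definition of $\mathcal D$ rules out the degenerate configuration --- but you organize it differently. The paper splits into two cases: with at least two blocks, it uses that for an interior index $1<i<N$ one has $\rho(a_i),\rho(b_i)>0$, so a vanishing off-diagonal entry next to such a row immediately gives strict dominance; with a single block, it uses the positivity of the $\rho$-measure of the complement of the union of the cells to place $a_1$ or $b_N$ inside $\spt\rho$. You instead telescope the row slacks over each block, reduce to the two extreme endpoints of the block, and run a single contradiction argument invoking both requirements in the definition of $\mathcal D$ (positive mass of every cell and of the complement). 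This buys a uniform treatment of all blocks, including singleton blocks at the indices $1$ or $N$, which the paper's multi-block case covers only tersely. Two points to tighten. First, consecutive rows of a common block need not fall under case~2 of the proof of Lemma~\ref{lem:block-decomposition}: they may be in the degenerate case~3 with the Voronoi branch selected by $v$; this is harmless, since the slack contribution of such an interface vanishes either way. Second, and more substantively, your claim that the extreme endpoints $a_{i_{\min}}$ and $b_{i_{\max}}$ of a block are necessarily spherical requires excluding a block boundary produced by a case-2 interface at which $\rho(z_i)=0$; this is where $\psi\in\mathcal D$ must be used once more: if $\RLag_i$ and $\RLag_{i+1}$ touch at $z_i\notin\spt\rho$, then, the support being an interval, one of the two cells lies entirely outside $\spt\rho$ and has zero $\rho$-measure, a contradiction. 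With that one-line remark your block-sum identity is exact and the rest of the proof goes through.
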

\begin{proof}
We recall that
\begin{equation*}
\partial_v^+G_i =
\rho(b_i)\left[
\left.
\frac{-(v_{i+1}-v_i)}{2(y_{i+1}-y_i)}
\right\vert
\frac{v_i}{2\sqrt{\psi_i}}
\right]
+
\rho(a_i)\
\left[
\left.
\frac{v_i-v_{i-1}}{2(y_i-y_{i-1})}
\right\vert
\frac{v_i}{2\sqrt{\psi_i}}
\right]
\end{equation*}
where the notation $[c|d]$ stands
for either expression $c$ or expression $d$.
As a result, line $i$ has a strictly dominant
diagonal coefficient if and only if
one of $\rho(a_i)$ or $\rho(b_i)$
is non-zero and the square bracket coefficient
next to it takes the value on the right.

Let us first consider a line of $H(\psi,v)$
with index $1 < i < N$.
Then $\rho(a_i) > 0$ and $\rho(b_i) > 0$,
since the density is strictly positive in the
interior of $\spt\rho$,
which is convex.
As a result, line $i$ is strictly diagonally
dominant if and only if its
$(i-1)$\textsuperscript{th} or
$(i+1)$\textsuperscript{th} coefficient
(or both) is zero.
This ensures that if there are two blocks $H_j$
or more, each of these blocks has a strictly
diagonally dominant line --- its first
or its last.

Let us now tackle the case where the
decomposition consists of a single block, that is,
$H(\psi,v)$ is already irreducible.
Then the restricted Laguerre cells are all
packed together: $b_i = a_{i+1}$
for every $1 \leq i < N$.
Since the complement of their union
--- the additional cell,
associated to the auxiliary point introduced
in Remark~\ref{rem:auxiliary-point} ---
has non-zero $\rho$-measure, we must have
$\min(\spt\rho) < a_1$ or
$b_N < \max(\spt\rho)$.
As a result, the density is non-zero at one
of $a_1$ or $b_N$, so at least one of the
first and last lines is strictly diagonally
dominant.
\end{proof}


\end{sloppypar}
\end{document}